\newenvironment{red}{\relax\color{red}}{\relax}
\newenvironment{blue}{\relax\color{blue}}{\hspace*{.5ex}\relax}
\newcommand{\ber}{\begin{red}}
\newcommand{\er}{\end{red}}
\newcommand{\beb}{\begin{blue}}
\newcommand{\eb}{\end{blue}}
\newcommand{\GS}{Gr\"obner--Shirshov }
\newcommand{\bc}{{\boldsymbol{c}}}
\theoremstyle{plain}
\newtheorem{thm}{Theorem}[section]
\newtheorem{lem}[thm]{Lemma}
\newtheorem{prop}[thm]{Proposition}
\newtheorem{cor}[thm]{Corollary}
\theoremstyle{definition}
\newtheorem{definition}[thm]{Definition}
\newtheorem{example}[thm]{Example}
\newtheorem{remark}[thm]{Remark}
\numberwithin{equation}{section} \numberwithin{figure}{section}
\numberwithin{table}{section}
\begin{document}

\title[Rigid reflections and reduced roots]{Rigid reflections of rank 3 Coxeter groups and \\ reduced roots of rank 2 Kac--Moody algebras}

\author[K.-H. Lee]{Kyu-Hwan Lee$^{\star}$}
\thanks{$^{\star}$This work was partially supported by a grant from the Simons Foundation (\#712100).}
\address{Department of
Mathematics, University of Connecticut, Storrs, CT 06269, U.S.A.}
\email{khlee@math.uconn.edu}

\author[J. Yu]{Jeongwoo Yu$^{\dagger}$$^{\diamond}$}
\thanks{$^{\dagger}$ This work is partially supported by the National Research Foundation of Korea(NRF) grant funded by the Korea government(MSIT) (No.\,2019R1A2C1084833 and 2020R1A5A1016126).}
\thanks{$^{\diamond}$ Corresponding Author: Jeongwoo Yu}
\address{Department of Mathematical Sciences, Seoul National University, Seoul 08826,
South Korea}
\email{ycw453@snu.ac.kr}
\begin{abstract}
In a recent paper by K.-H. Lee and K. Lee, rigid reflections are defined for any Coxeter group via non-self-intersecting curves on a Riemann surface with labeled curves. When the Coxeter group arises from an acyclic quiver, the rigid reflections are related to the rigid representations of the quiver. 
For a family of rank $3$ Coxeter groups, it was conjectured in the same paper that there is a natural bijection from  the set of reduced positive roots of a symmetric rank $2$ Kac--Moody algebra onto the set of rigid reflections of the corresponding rank $3$ Coxeter group. In this paper, we prove the conjecture.
\end{abstract}

\maketitle

\noindent {\bf Keywords}: Coxeter group, Kac--Moody algebra, Rigid reflection

\noindent {\bf Mathematics Subject Classification (2020)}: {Primary 20F55, 17B67; Secondary 16G20}

\noindent {\bf Data availability statement}: Data sharing not applicable to this article as no datasets were generated or analysed during the current study.

\section{Introduction}

Let $Q$ be an acyclic quiver of rank $n$, and $\mathrm{mod}(Q)$ be the category of finite dimensional representations of $Q$. In order to understand the category $\mathrm{mod}(Q)$, one needs to consider the indecomposable representations without self-extensions, called {\em rigid} representations. Their dimension vectors form a special subset of the set of positive real roots of the Kac--Moody algebra $\mathfrak g(Q)$ associated to $Q$, and are  called {\em real Schur roots}. These roots also appear in the denominators of cluster variables, or as the $c$-vectors  of the  cluster algebra associated to $Q$, and can be described combinatorially in terms of non-crossing partitions. See \cite{BDSW,CK,Ch,C-B,HK,IS,S,Se,ST} for more details on these connections. 

As a new geometric/combinatorial approach to describe rigid representations and real Schur roots, K.-H. Lee and K. Lee conjectured in their paper \cite{LL}  a correspondence between rigid representations in $\mathrm{mod}(Q)$ and the set of certain non-self-intersecting
curves on a Riemann surface $\Sigma$ with $n$ labeled curves. The conjecture is now proven by A. Felikson and P. Tumarkin \cite{FT} for acyclic quivers with multiple edges between every pair of vertices. Very recently, S. D. Nguyen \cite{Ngu} informed us that he proved the conjecture for an arbitrary acyclic quiver.

The conjecture actually characterizes the family of reflections in the Weyl group of $\mathfrak g(Q)$ which are associated to real Schur roots  via  non-self-intersecting curves in $\Sigma$. Since  reflections make sense for any Coxeter groups, the geometric characterization can be carried over. Indeed, the {\em rigid reflections} are defined in \cite{LL1} for any Coxeter group $W$ to be those corresponding to non-self-intersecting curves in $\Sigma$. Unexpectedly, an interesting  phenomenon was observed that the rigid reflections of $W$ are parametrized by the positive roots of a seemingly unrelated Kac--Moody algebra $\mathcal H$, and the phenomenon was investigated in detail for a family of rank 3 Coxeter groups.  

To be precise, for each positive integer $m\geq 2$, consider the following Coxeter group
 $$W(m)=\langle s_1,s_2,s_3 \ : \   s_1^2=s_2^2=s_3^2=(s_1s_2)^m=(s_2s_3)^m=e \rangle.
 $$
Let  $\mathcal H(m)$ be the rank 2 Kac--Moody algebra associated with the Cartan matrix ${\tiny \begin{pmatrix}  2&-m\\-m&2\end{pmatrix}}$. Denote an element of the root lattice of $\mathcal H(m)$ by $[a,b]$, $a, b \in \mathbb Z$, where $[1,0]$ and $[0,1]$ are the positive simple roots. 
A root $[a,b]$ of $\mathcal H(m)$ is called {\em reduced} if $\gcd(a,b)=1$ and $ab\neq 0$. Note that the set of reduced roots includes both real and imaginary roots. A reduced root determines a non-self-intersecting curve $\eta$ on the torus $\Sigma$ with triangulation by three labeled curves. 

Now define a function, $[a,b] \mapsto s([a,b]) \in W(m)$, by reading off the labels of the intersection points of $\eta$ with the labeled curves on $\Sigma$ and by writing down the products of simple reflections accordingly. See \eqref{pic-ab} for an example.  
In \cite{LL1}, it was conjectured that this function $[a,b] \mapsto s([a,b])$ is a bijection from the set of reduced roots of $\mathcal H(m)$ onto the set of rigid reflections of $W(m)$. If established, it would show that the set of rigid reflections in $W(m)$ has a structure coming from the set of reduced roots of $\mathcal H(m)$. Most importantly, the Weyl group action on the set of roots of $\mathcal H(m)$ would be transported to the set of rigid reflections on $W(m)$. In the same paper \cite{LL1}, as a main result, it was shown that the function is surjective; however, injectivity was checked only for $m=2$.  

One of the main difficulties in showing injectivity is directly related to the word problem for $W(m)$. Since $s([a,b])$ are given as words in simple reflections, one needs to determine when such two words represent the same (or different)  elements in $W(m)$. A solution to this problem may be given by an algorithm to write $s([a,b])$ into a canonical form or a standard word.   

\medskip

In this paper, we obtain such a reduction algorithm and prove the conjecture of \cite{LL1}.

\begin{thm} \label{thm-in}
For $m \ge 2$, the function, $[a,b] \mapsto s([a,b])$, is a bijection from the set of reduced positive roots of $\mathcal H(m)$ onto  the set of rigid reflections of $W(m)$. 
\end{thm}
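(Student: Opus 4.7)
Since surjectivity is already established in \cite{LL1}, the task reduces to injectivity of $[a,b]\mapsto s([a,b])$. Because $s([a,b])$ is literally a word in $s_1,s_2,s_3$ read off the triangulated torus, the statement is a word-problem question for $W(m)$ restricted to a very specific family of words. My plan is to attach a canonical form in $W(m)$ to each reduced positive root of $\mathcal H(m)$, show that $s([a,b])$ always reduces to this canonical form, and then verify that distinct reduced roots produce distinct canonical forms.

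\textbf{Step 1: inductive structure of $s([a,b])$.} First I would unpack the curve definition of $s([a,b])$ as a recursion governed by the Weyl group $W(\mathcal H(m))=\langle r_1,r_2\rangle$ acting on the root lattice by $r_1[a,b]=[mb-a,\,b]$ and $r_2[a,b]=[a,\,ma-b]$. On the universal cover of the torus, applying $r_i$ modifies the geodesic $\eta_{[a,b]}$ by a Dehn-twist-like move which prepends and appends a fixed word $u_i\in W(m)$, built out of alternating $s_1,s_2$ (resp.\ $s_2,s_3$), to the reading of $\eta$. The expected outcome is a clean recursion
\[
s\bigl(r_i[a,b]\bigr)=u_i\cdot s([a,b])\cdot u_i^{-1}\qquad (i=1,2).
\]
Iterating this along the Euclidean-type descent in $\mathcal H(m)$ expresses every $s([a,b])$ for a real root as a nested conjugate of a single simple reflection.

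\textbf{Step 2: a confluent rewriting system on $W(m)$.} Next I would introduce a reduction system (essentially a Gr\"obner--Shirshov basis) on the free monoid in $s_1,s_2,s_3$: the involutive rules $s_i^2\to e$ and the two braid rules obtained from $(s_1s_2)^m=e$ and $(s_2s_3)^m=e$. Choosing a deg-lex order on words with $s_1<s_2<s_3$, I would show that the induced reduction terminates and that all critical pairs coming from overlaps of left-hand sides close. This yields a unique normal form for every element of $W(m)$, which in particular defines the canonical form of $s([a,b])$.

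\textbf{Step 3: injectivity.} With Steps~1--2 in hand, injectivity for real roots follows by induction on $a+b$: the normal form of $s([a,b])$ is obtained from that of $s([a',b'])$ (with $[a',b']$ a reduction under $r_1$ or $r_2$) by prefixing/suffixing a $u_i^{\pm1}$, so a collision $s([a,b])=s([a',b'])$ forces a collision one step shorter, and the base cases $[1,0],[0,1],[1,1]$ are checked by hand. For imaginary roots (genuinely relevant when $m\ge 3$, since the Weyl descent does not terminate at a simple root), the canonical form carries a quasi-periodic tail whose slope recovers the ratio $a/b$, and since $\gcd(a,b)=1$ this determines $[a,b]$ itself.

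\textbf{Main obstacle.} The principal difficulty lies in Step~2: one must prove that the braid relations form a convergent rewriting system when applied to the deeply nested conjugates $u\,s_i\,u^{-1}$ that arise in Step~1, so that no two geometrically different curves $\eta_{[a,b]}$ and $\eta_{[a',b']}$ accidentally give words representing the same reflection. In other words, producing a genuine algorithm---rather than merely asserting the existence of a normal form---and checking that every ambiguity resolves is the technical heart of the argument and is where the new work beyond \cite{LL1} must be done.
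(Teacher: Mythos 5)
Your overall strategy---surjectivity from \cite{LL1}, then injectivity via a Gr\"obner--Shirshov/rewriting normal form combined with the Weyl-group descent on roots---is exactly the skeleton of the paper's argument. However, there are three genuine gaps. First, your Step 2 asserts that the involutive rules together with the two braid rules form a confluent system; this is false for even $m$. The critical pair between $(s_1s_2)^k-(s_2s_1)^k$ and $(s_2s_3)^k-(s_3s_2)^k$ does not resolve, and one must adjoin the extra relation $(s_1 s_2)^{k-1} s_1 (s_3s_2)^k - (s_2s_1)^k s_3 (s_2s_3)^{k-1}$ before the set closes under composition (Proposition \ref{prop:s(m)even}). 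Second, and more seriously, your Step 3 for imaginary roots is an assertion, not an argument: saying that ``the canonical form carries a quasi-periodic tail whose slope recovers $a/b$'' names the desired conclusion without any mechanism for proving it. The actual content of the paper is precisely here: one attaches to $[a,b]$ a tower of canonical sequences $\bc_1,\bc_2,\dots$ (a continued-fraction-type expansion of $a/b$) and a level $L$, proves that this data determines $[a,b]$ (Lemma \ref{lem:ncninj}), computes the $\mathcal S(m)$-standard word of $s^{a\times b}$ explicitly in terms of this data (Proposition \ref{lem:notreduced}, Lemmas \ref{lem:1standodd}, \ref{lem:generalandodd}, plus several exceptional cases for $m=3,4,5$), and only then reads off injectivity by comparing initial segments of the standard words. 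None of this is replaced by your slope heuristic, and the reduction to standard form is not a routine application of the rewriting rules---it requires the level-by-level conjugation structure of Corollary \ref{cor-red}.

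Third, you do not address the possibility that $s([a,b])$ with $a>b$ coincides with $s([a',b'])$ with $a'<b'$. The normal-form machinery in the paper is only set up for the half $a\ge b$ (with the other half obtained by swapping the roles of $s_1$ and $s_3$), so collisions across the two halves cannot be excluded by comparing those normal forms directly. The paper handles this by a completely separate argument (Proposition \ref{prop-im} and Corollary \ref{cor-agb}): it computes the coordinates $p\alpha_1+q\alpha_2+r\alpha_3$ of the positive root of $W(m)$ attached to $s([a,b])$ and shows $p<r$ when $a>b$ (via estimates on Chebyshev-type polynomials), whence $p>r$ when $a<b$ by symmetry. Some argument of this kind is indispensable, and your proposal contains nothing in its place. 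Your Step 1 also needs care: the clean conjugation formula available in \cite{LL1} is for the product $\sigma_1\sigma_2$ (Lemma \ref{lem-ll1}), not for each simple reflection separately, since a single $\sigma_i$ can carry a root out of the positive cone $\mathcal P^+$ where $s([a,b])$ is defined.
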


The canonical forms or standard words of the elements in $W(m)$ are determined by applying \GS basis theory. In the first substantial step, {\em canonical sequences} of positive integers are assigned to each $[a,b]$ and a reduction is accomplished accordingly. The result is described in Corollary \ref{cor-red}. This reduction through canonical sequences can be visualized naturally in terms of associated curves on the torus $\Sigma$ and are related to the aforementioned Weyl group action on the set of rigid reflections in $W(m)$. Moreover, we note that the canonical sequences was used in a study of $2$-bridge link groups \cite{LS} in a slightly different form.     

However, this reduction through canonical sequences are not sufficient for our purpose, and we need to perform further reduction until we obtain standard words for the elements in $W(m)$ to distinguish them explicitly and to show injectivity of the map $[a,b] \mapsto s([a,b])$.

As is well known, the word problem for a group is intractable, in general. Surprisingly, \GS bases for $W(m)$ are not much different from the original set of defining relations, though $W(m)$ is infinite, and our reduction process becomes feasible. It would be interesting to see if there are other families of infinite Coxeter groups with relatively simple \GS bases. To such families, the method of this paper will generalize to reveal precise connections between rigid reflections and roots of Kac--Moody algebras. 

\medskip

The organization of this paper is as follows. In Section \ref{rigid}, we recall the definition of rigid reflections. In the next section, we consider the rank 3 Coxeter groups $W(m)$ and collect known results from \cite{LL1} about the rigid reflections of $W(m)$. In Section \ref{can sequences}, the canonical sequence and level of a reduced root $[a,b]$ are defined and their properties are studied. Using the canonical sequence, we achieve a substantial reduction of $s([a,b])$. In Section \ref{reductions}, we complete the reduction process to obtain the standard words for the elements of $W(m)$ and prove Theorem \ref{thm-in}, up to computation of \GS bases for $W(m)$ which is accomplished in the last section. 

\subsection*{Acknowledgments} We are grateful to Jae-Hoon Kwon and Kyungyong Lee for helpful discussions. We thank the anonymous referee whose useful comments helped to improve the exposition of this paper. 

\medskip

\section{Rigid reflections} \label{rigid}
In this section  we recall the definition of a rigid reflection from \cite{LL1}.

\medskip

Let $$
W=\langle s_1,s_2,...,s_n \ : \   s_1^2=\cdots=s_n^2=e, \ (s_is_j)^{m_{ij}}=e \rangle
$$ 
be a Coxeter group with $m_{ij}\in\{2,3,4,...\} \cup \{ \infty \}$. In order to define the rigid reflections of $W$, we introduce\footnote{One can use a different, equivalent geometric model as in \cite{FT, Ngu}.}  a Riemann surface  $\Sigma$ equipped with $n$ labeled curves as below.
Let $G_1$ and $G_2$ be two identical copies of a regular $n$-gon. Label the edges of each of the two  $n$-gons
 by $T_{1}, T_{2}, \dots , T_{n}$ counter-clockwise. On $G_i$ $(i=1,2)$, let $L_i$ be the line segment from the center of $G_i$ to the common endpoint of  $T_{n}$ and $T_{1}$. Later,  these line segments will only be used  to designate the end points of admissible curves and will not be used elsewhere.   Fix the orientation of every edge of $G_1$ (resp.  $G_2$) to be 
 counter-clockwise (resp. clockwise) as in the following picture. 
 \begin{center}
 \begin{tikzpicture}[scale=0.5]
\node at (2.4,-2.2){\tiny{$T_n$}};
\node at (1.5,2.6){\tiny{$T_2$}};
\node at (3.8,0){\tiny{$T_1$}};
\node at (-2.0,-2.7){\tiny{$T_{n-1}$}};
\node at (-2.0,2.5){\tiny{$T_3$}};
\node at (-3.2,0){\vdots};
\draw (0,0) +(30:3cm) -- +(90:3cm) -- +(150:3cm) -- +(210:3cm) --
+(270:3cm) -- +(330:3cm) -- cycle;
\draw [thick] (2.4,-0.2) -- (2.6,0)--(2.8,-0.2);
\draw [thick] (1.4,1.95) -- (1.3,2.25)--(1.6,2.25);  
\draw [thick] (-1.0,2.2) -- (-1.3,2.2)--(-1.2,2.5);  
\draw [thick] (-2.4,0.2) -- (-2.6,0)--(-2.8,0.2);   
\draw [thick] (1.0,-2.2) -- (1.3,-2.2)--(1.2,-2.5);  
\draw [thick] (-1.4,-1.95) -- (-1.3,-2.25)--(-1.6,-2.25);  
\draw [thick] (0,0)--(2.6,-1.5);  
\node at (1.3,-1.1){\tiny{$L_1$}};
\end{tikzpicture}
\begin{tikzpicture}[scale=0.5]
\node at (-1.3,1.2){\tiny{$L_2$}};
\draw [thick] (0,0)--(-2.6,1.5);  
\node at (2.4,-2.2){\tiny{$T_3$}};
\node at (1.5,2.9){\tiny{$T_{n-1}$}};
\node at (3.3,0){\vdots};
\node at (-2.0,-2.7){\tiny{$T_2$}};
\node at (-2.0,2.5){\tiny{$T_n$}};
\draw (0,0) +(30:3cm) -- +(90:3cm) -- +(150:3cm) -- +(210:3cm) --
+(270:3cm) -- +(330:3cm) -- cycle;
\draw [thick] (-2.4,-0.2) -- (-2.6,0)--(-2.8,-0.2);
\draw [thick] (-1.4,1.95) -- (-1.3,2.25)--(-1.6,2.25);  
\draw [thick] (1.0,2.2) -- (1.3,2.2)--(1.2,2.5);  
\draw [thick] (2.4,0.2) -- (2.6,0)--(2.8,0.2);   
\draw [thick] (-1.0,-2.2) -- (-1.3,-2.2)--(-1.2,-2.5);  
\draw [thick] (1.4,-1.95) -- (1.3,-2.25)--(1.6,-2.25);  
\end{tikzpicture}
 \end{center}

 Let $\Sigma$ be the Riemann surface of genus $\lfloor \frac{n-1}{2}\rfloor$
obtained by gluing together the two $n$-gons with all the edges of the same label identified according 
to their orientations.  The edges of the $n$-gons become $n$ different curves in $\Sigma$. If $n$ is odd, all the vertices of the two $n$-gons 
are identified to become one point in $\Sigma$ and the curves obtained from the edges become loops. If $n$ is even, two distinct
 vertices are shared by all curves. Let $\mathcal{T}={T}_1\cup\cdots{T}_n\subset \Sigma$, and $V$ be the set of the vertex (or vertices) on $\mathcal{T}$.  

 Let $\mathfrak W$ be the set of words from the alphabet $\{1,2,...,n\}$, and let $\mathfrak R\subset\mathfrak W$ be the subset of words $\mathfrak w=i_1i_2 \cdots i_k$ such that $k$ is an odd integer and $i_{j}=i_{k+1-j}$ for all $j\in\{1,...,k\}$, in other words, $s_{i_1}s_{i_2} \cdots s_{i_k}$ is a reflection in $W$. For  $\mathfrak{w}=i_1i_2 \cdots i_k\in \mathfrak W$, denote $s_{i_1}s_{i_2}...s_{i_k}\in W$ by $s(\mathfrak{w})$.

\begin{definition} 
An \emph{admissible} curve is a continuous function $\eta:[0,1]\longrightarrow \Sigma$ such that

1) $\eta(x)\in V$ if and only if  $x\in\{0,1\}$;

2) $\eta$ starts and ends at the common end point of $T_1$ and $T_n$. More precisely, there exists $\epsilon>0$ such that $\eta([0,\epsilon])\subset L_1$ and $\eta([1-\epsilon,1])\subset L_2$;

3) if $\eta(x)\in \mathcal{T}\setminus V$ then $\eta([x-\epsilon,x+\epsilon])$ meets $\mathcal{T}$ transversally for sufficiently small $\epsilon>0$.
\end{definition}

If $\eta$ is admissible, then we obtain $\upsilon(\eta):={i_1}\cdots {i_k}\in \mathfrak W$  given by 
$$\{x\in(0,1) \ : \ \eta(x)\in \mathcal{T}\}=\{x_1<\cdots<x_k\}\quad \text{ and }\quad \eta(x_\ell)\in T_{i_\ell}\text{ for }\ell\in\{1,...,k\}.$$ 
Conversely, note that for every $\mathfrak w\in \mathfrak W$ of an odd length, there is an admissible curve $\eta$ with $\upsilon(\eta)=\mathfrak{w}$. Hence, every reflection in $W$   can be represented by some admissible curve(s). For brevity, let $s(\eta):=s(\upsilon(\eta))$.

\begin{definition} \label{def-rr}
An element $ w \in W$ is called a \emph{rigid reflection}\footnote{It was pointed out by the referee that there is an interesting relationship of this definition with decompositions of the Coxeter element into products of reflections as one can see from \cite{BDSW, IS, Ngu}; in particular, see the proof of Theorem 3.1 in \cite{Ngu}.} if there exist an expression  $w=s_{i_1}s_{i_2} \cdots s_{i_k}$ and a non-self-crossing admissible curve $\eta$ such that  $\upsilon(\eta)=i_1...i_k\in \mathfrak R$. 
\end{definition}

\begin{example} \label{exa-1}
Let $n=4$, and $W = \langle s_1, s_2, s_3, s_4 \ : \ s_1^2=s_2^2=s_3^2=s_4^2 =e \rangle $, i.e., $m_{ij} = \infty$ for $i \neq j$. Consider the curve $\eta$ in the following picture:
\begin{center}
\begin{tikzpicture}[scale=0.5]
\node at (1,2.5){\tiny{$1$}};
\node at (-2.5,5.5){\tiny{$2$}};
\node at (-5.5,2.5){\tiny{$3$}};
\node at (-2.5,-0.5){\tiny{$4$}};
\node at (4.5,5.5){\tiny{$4$}};
\node at (7.5,2.5){\tiny{$3$}};
\node at (4.5,-0.5){\tiny{$2$}};
\draw (0,0)  -- (0,5) -- (-5,5) -- (-5,0) -- cycle;
\draw (2,0)  -- (2,5) -- (7,5) -- (7,0) -- cycle;
\draw [thick] (0,0) -- (-1.5,1.5); 
\draw [thick] (2,5) -- (3.5,3.5); 
\draw [red, thick, rounded corners] (0,0) -- (-1.0,1.0) --(-1.5,0) ; 
\draw [red, thick, rounded corners] (5.5,5) -- (3.5, 3) -- (2, 3.5); 
\draw [red, thick, rounded corners] (0, 3.5) -- (-5, 1.5); 
\draw [red, thick, rounded corners] (7, 1.5) -- (4.5, 0); 
\draw [red, thick, rounded corners] (-2.5, 5) -- (-5, 3.5); 
\draw [red, thick, rounded corners] (7, 3.5) -- (2, 2); 
\draw [red, thick, rounded corners] (0, 2) -- (-1.5, 2) -- (-3.5, 0); 
\draw [red, thick, rounded corners] (3.5, 5) -- (3, 4)-- (2,5);
\end{tikzpicture}
\end{center}
Since there is no self-intersection, we obtain the corresponding rigid reflection  
\[ s(\eta) = s_4s_1s_3s_2s_3s_1s_4. \]
On the other hand, the reflection $s_4s_2s_3s_1s_3s_2s_4$ comes from the following curve $\eta'$ which has a self-intersection. 
\begin{center}
\begin{tikzpicture}[scale=0.5]
\node at (1,2.5){\tiny{$1$}};
\node at (-2.5,5.5){\tiny{$2$}};
\node at (-5.5,2.5){\tiny{$3$}};
\node at (-2.5,-0.5){\tiny{$4$}};
\node at (4.5,5.5){\tiny{$4$}};
\node at (7.5,2.5){\tiny{$3$}};
\node at (4.5,-0.5){\tiny{$2$}};
\draw (0,0)  -- (0,5) -- (-5,5) -- (-5,0) -- cycle;
\draw (2,0)  -- (2,5) -- (7,5) -- (7,0) -- cycle;
\draw [thick] (0,0) -- (-1.5,1.5); 
\draw [thick] (2,5) -- (3.5,3.5); 
\draw [red, thick, rounded corners] (0,0) -- (-1.0,1.0) --(-1.5,0) ; 
\draw [red, thick, rounded corners] (5.5,5) -- (3.5, 0); 
\draw [red, thick, rounded corners] (-3.5, 5) -- (-5, 3.5); 
\draw [red, thick, rounded corners] (7, 3.5) -- (2, 2.5); 
\draw [red, thick, rounded corners] (0, 2.5) -- (-5, 1.5); 
\draw [red, thick, rounded corners] (7, 1.5) -- (5.5, 0); 
\draw [red, thick, rounded corners] (-1.5, 5) --  (-3.5, 0); 
\draw [red, thick, rounded corners] (3.5, 5) -- (3, 4)-- (2,5);
\end{tikzpicture}
\end{center}
Consequently, the reflection $s(\eta')=s_4s_2s_3s_1s_3s_2s_4$ is {\em not} rigid.

\end{example}

Let $\Phi$ be the root system of $W$, realized in the real vector space $\mathbf E$ with basis $\{\alpha_1, \dots , \alpha_n \}$ with the symmetric bilinear form $B$ defined by
\[ B(\alpha_i, \alpha_j)= - \cos (\pi/m_{ij})  \text{ for } 1 \le i,j \le n.\] For each $i \in \{ 1, \dots , n \}$, define the action of $s_i$ on $\mathbf E$ by 
\[ s_i (\lambda) = \lambda -2B(\lambda, \alpha_i) \alpha_i , \quad \lambda \in \mathbf E, \]
and extend it to the action of $W$ on $\mathbf E$. Then each root $\alpha \in \Phi$ determines a reflection $s_\alpha \in W$.  (See \cite{Hu} for more details.)

\begin{remark}
When $W$ is the Weyl group of a Kac--Moody algebra $\mathfrak g$, the set $\Phi$ is precisely the set of {\em real} roots of $\mathfrak g$. However, $W$ may not be associated with a Kac--Moody algebra, in general. We follow \cite{Hu} to call $\Phi$ the set of roots of $W$.
\end{remark}

\begin{definition} \label{def-rroot}
A positive root $\alpha \in \Phi$ of $W$ is called {\em rigid} if the corresponding reflection $s_\alpha \in W$ is rigid. 
\end{definition}

\begin{example}
In Example \ref{exa-1}, we obtained the rigid reflection $$s_4s_1s_3s_2s_3s_1s_4.$$ It give rises to a rigid root  
\begin{equation*} 6\alpha_1+\alpha_2+2\alpha_3+18\alpha_4=s_4s_1s_3(\alpha_2). \end{equation*}
\end{example}

\medskip

\section{A family of rank 3 Coxeter groups} \label{rank3}

In this section we focus our attention to the rank 3 groups $W(m)$ and collect known results from \cite{LL1} about the rigid reflections of $W(m)$.

\medskip

As in the introduction, fix a positive integer $m\geq 2$ and  set
 $$W(m)=\langle s_1,s_2,s_3 \ : \   s_1^2=s_2^2=s_3^2=(s_1s_2)^m=(s_2s_3)^m=e \rangle.
 $$ Note that we put, in particular, $m_{13}=m_{31}=\infty$.
Let  $\mathcal H(m)$ be the rank 2 hyperbolic Kac--Moody algebra associated with the Cartan matrix ${\tiny \begin{pmatrix}  2&-m\\-m&2\end{pmatrix}}$. We denote an element of the root lattice of $\mathcal H(m)$ by $[a,b]$, $a, b \in \mathbb Z$, where $[1,0]$ and $[0,1]$ are the positive simple roots. 
A root $[a,b]$ of $\mathcal H(m)$ is called {\em reduced} if $\gcd(a,b)=1$ and $ab\neq 0$. 
One can see that every non-simple real root is reduced.

Let $\mathcal P^+=\{ [a,b]\,:\, a, b \in \mathbb Z_{> 0}, \ \gcd(a,b)=1 \}$.
For every $[a,b] \in \mathcal P^+$, let $\eta([a,b])$ be the line segment  from $(0,0)$ to $(a,b)$ on the universal cover of the torus, which automatically has no self-intersections.   Write   
$s([a,b]):=s(\eta([a,b]))\in W(m)$ for the corresponding rigid reflection.
For example, we have \begin{equation} \label{pic-ab}s([5,3])=s_2s_3s_2s_1s_2s_3s_2s_3s_2s_1s_2s_3s_2 \end{equation}  as one can check in the following picture.
\begin{center}\begin{tikzpicture}[scale=0.25mm]
\draw [help lines] (0,0) grid (5,3);
\draw [help lines] (0,1)--(1,0);
\draw [help lines] (0,2)--(2,0);
\draw [help lines] (0,3)--(3,0);
\draw [help lines] (1,3)--(4,0);
\draw [help lines] (2,3)--(5,0);
\draw [help lines] (3,3)--(5,1);
\draw [help lines] (4,3)--(5,2);
\draw [thick,red] (0,0)--(5,3);  
\end{tikzpicture}
\qquad 
\begin{tikzpicture}[scale=0.4mm]
\draw [help lines] (0,1)--(1,0);
\draw [help lines] (0,0)--(1,0);
\draw [help lines] (0,0)--(0,1);
\node at (-0.1, 0.5){\tiny{$3$}};
\node at (0.5, -0.1){\tiny{$1$}};
\node at (0.65, 0.55){\tiny{$2$}};
\end{tikzpicture}
\end{center}

Recall from \cite{Kac, KaMe} that
\begin{equation} \label{eqn-root}
\text{
$[a,b]$ is a root of $\mathcal H(m)$ \quad if and only if \quad $a^2+b^2-mab \le 1$. }
\end{equation}
Define a sequence $\{F_n\}$ recursively by $F_0=0$, $F_1=1$, and $F_n=mF_{n-1}-F_{n-2}$.
Note that $[a,b]$ is a {\em real} root if and only if $[a,b]$ is either $[F_n,F_{n+1}]$ or $[F_{n+1},F_{n}]$, $n \geq 0$.  (See \cite{Kac,KaMe}.) A non-real root is called {\em imaginary}. 

\begin{definition}
Let $(a_1, a_2)$ be a pair of positive integers with $a_1\geq a_2$. 
\begin{enumerate}
\item A \emph{maximal Dyck path} of type $a_1\times a_2$, denoted by $\mathcal{D}^{a_1\times a_2}$, is a lattice path
from $(0, 0)$  to $(a_1,a_2)$ that is as close as possible to the diagonal joining $(0,0)$ and $(a_1,a_2)$ without ever going above it. 

\item Assign $s_2s_3\in W(m)$ to each horizontal edge of $\mathcal{D}^{a_1\times a_2}$, and $s_2s_1\in W(m)$ to each vertical edge. Read these elements in the order of edges along $\mathcal{D}^{a_1\times a_2}$, then we get a product of copies of $s_2s_3$ and $s_2s_1$. Denote the product by $s^{a_1\times a_2}$.
\end{enumerate}
\end{definition}

\begin{remark} \label{rmk-Ch}
It was pointed out by the referee that a maximal Dyck path corresponds to what is known in the literature on combinatorics of words as a {\em Christoffel word}. The statistics associated with a maximal Dyck path defined in the next section of this paper have their counterparts in combinatorics of words. See Chapter 2 of \cite{Lo} for details.
\end{remark}
 
\begin{lem}[\cite{LL1}]  \label{sFnFn1} Assume that $[a,b] \in \mathcal P^+$ with $a\geq b$. Then we have the following formulas.
\begin{enumerate}
\item [(1)] \qquad $s([a,b])=s_3s_2 s^{a\times b} s_1$.
\item [(2)] \qquad  
$s^{F_2\times F_1} =s_2s_1 \quad \text{ and }$
\begin{align*}
s^{F_{n}\times F_{n-1}}&=\left\{\begin{array}{ll}s_1(s_3s_2s_1)^{(n-3)/2}s_2s_3(s_1s_2s_3)^{(n-3)/2}s_1, & \qquad \text{ for }n\geq3\text{ odd};\\
 s_1(s_3s_2s_1)^{(n-4)/2}s_3s_1s_2s_3(s_1s_2s_3)^{(n-4)/2}s_1, & \qquad \text{ for }n\geq4\text{ even.}\end{array} \right .  
\end{align*}

\end{enumerate}
\end{lem}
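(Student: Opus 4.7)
For part~(1), the plan is to lift $\eta([a,b])$ to the universal cover of $\Sigma$ and directly read off the label sequence, then match with $s_3 s_2 s^{a\times b} s_1$ after free cancellation by $s_i^2=e$. The lift is the segment from $(0,0)$ to $(a,b)$ in the plane, which is tiled by horizontal lines (label~$1$), vertical lines (label~$3$), and anti-diagonals $x+y=k$ (label~$2$). Its interior meets $a-1$ verticals, $b-1$ horizontals, and $a+b-1$ anti-diagonals, a total of $2a+2b-3$ crossings. Since $0<b/a\le 1$, the three arithmetic progressions $\{j/a\}$, $\{k/b\}$, $\{\ell/(a+b)\}$ on $(0,1)$ interleave so that consecutive non-diagonal values are separated by exactly one anti-diagonal value, while the sequence begins and ends with an anti-diagonal. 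Therefore
\[
\upsilon(\eta([a,b])) = s_2 s_{x_1} s_2 s_{x_2}\cdots s_2 s_{x_{a+b-2}} s_2, \qquad x_i\in\{1,3\}.
\]

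The non-trivial step is to identify $(x_i)$ with $(e(d_{i+1}))_{i=1}^{a+b-2}$, where $d_1,\dots,d_{a+b}$ are the steps of $\mathcal{D}^{a\times b}$ and $e(H)=3$, $e(V)=1$; this is the standard Christoffel-word encoding of a line of rational slope (cf.\ Remark~\ref{rmk-Ch} and \cite{Lo}). Granting this, since $a\ge b\ge 1$ forces $d_1=H$ and Christoffel palindromicity forces $d_{a+b}=V$, in $s_3 s_2\cdot s^{a\times b}\cdot s_1$ the prefix $s_3 s_2$ cancels the leading block $s_2 s_3$ of $s^{a\times b}$ while the suffix $s_1$ absorbs the trailing $s_1$, leaving exactly $s_2 s_{e(d_2)}\cdots s_{e(d_{a+b-1})}s_2 = \upsilon(\eta([a,b]))$.

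For part~(2), I would induct on $n$ using $F_n=mF_{n-1}-F_{n-2}$. The base case $s^{F_2\times F_1}=s^{m\times 1}=(s_2 s_3)^m(s_2 s_1)=s_2 s_1$ in $W(m)$ is immediate from $(s_2 s_3)^m=e$. For the inductive step, the continued-fraction identity $F_n/F_{n-1}=m-F_{n-2}/F_{n-1}$ dictates a substitution rule transforming $\mathcal{D}^{F_{n-1}\times F_{n-2}}$ into $\mathcal{D}^{F_n\times F_{n-1}}$, in which each letter expands into a block of the opposite letter of length $m-1$ or $m$. Passing to $s^{F_n\times F_{n-1}}$ this produces long subwords $(s_2 s_3)^m$ and $(s_2 s_1)^m$, which collapse to the identity by the Coxeter relations, leaving exactly the skeleton stated in the lemma; the parity of $n$ controls which of $s_1$ or $s_3$ sits at the outermost layers, producing the two cases.

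The main obstacle is the Christoffel-word identification in part~(1) --- intuitively clear and present in the standard references, but requiring a careful analysis of the interleaving of the three arithmetic progressions above --- together with the parity-sensitive bookkeeping in part~(2) needed to match the collapsed product exactly against the stated compact formulas.
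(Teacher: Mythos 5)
This lemma is imported verbatim from \cite{LL1}; the present paper gives no proof of it, so there is no internal argument to compare your proposal against. Judged on its own terms, your plan is sound. For part (1), with the paper's labeling (vertical grid lines carry $T_3$, horizontal ones $T_1$, anti-diagonals $T_2$) the alternation claim is correct: since $\gcd(a,b)=1$ no two crossing parameters coincide, and a pigeonhole count shows each interval between consecutive anti-diagonal crossings contains exactly one grid crossing while the first and last contain none. The key identification $x_i=e(d_{i+1})$ does hold: the $\ell$-th step of $\mathcal{D}^{a\times b}$ is horizontal precisely when the segment crosses a vertical line between its $(\ell-1)$-st and $\ell$-th anti-diagonal crossings, as one checks from the description of $\mathcal{D}^{a\times b}$ as the path through the lattice points on $x+y=\ell$ closest to the segment from below (equivalently from $a_{1,i}=\lceil ai/b\rceil-\lceil a(i-1)/b\rceil$). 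The terminal cancellations $s_3s_2\cdot s_2s_3=e$ and $s_2s_1\cdot s_1=s_2$ then yield exactly $\upsilon(\eta([a,b]))$, so (1) follows.

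Part (2) is likewise a correct outline: the base case is right, and the block-substitution collapse via $(s_2s_3)^m=(s_2s_1)^m=e$ is exactly the computation $H_k^{m-1}V_k=H_{k+1}$, $H_k^{m-2}V_k=V_{k+1}$ that the paper itself carries out later in the proof of Proposition \ref{lem:notreduced}, so your induction would go through. The only caveat is that both halves stop precisely at the two steps you flag --- the Christoffel/cutting-sequence identification and the parity bookkeeping --- which are genuine (if standard, cf.\ Remark \ref{rmk-Ch} and \cite{Lo}) pieces of work rather than formalities; as written this is a correct plan rather than a complete proof.
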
 

We now state one of the main results of this paper.

\begin{thm} \label{thm-main}
The function, $[a,b] \mapsto s^{a \times b}$, is an injection from the set of reduced positive roots $[a,b]$ of $\mathcal H(m)$ with $a \ge b$ into  $W(m)$. 
\end{thm}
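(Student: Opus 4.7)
The strategy I would pursue is to reduce each $s^{a\times b}$ to a unique canonical \emph{standard word} in $W(m)$ via \GS basis theory, and then show that the standard word retains enough data to recover the pair $[a,b]$. Under this plan, injectivity of $[a,b] \mapsto s^{a\times b}$ follows from confluence and termination of the associated rewriting system: two words are equal in $W(m)$ precisely when they reduce to the same normal form, so distinctness of normal forms gives distinctness of group elements.

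The first step is structural. The maximal Dyck path $\mathcal D^{a\times b}$ decomposes as a sequence of horizontal runs separated by vertical edges, encoded by positive integers $(h_1,\dots,h_b)$ which essentially give the continued fraction expansion of $a/b$. Correspondingly, $s^{a\times b}$ is a concatenation of blocks of the form $(s_2s_3)^{h_i}(s_2s_1)$ (with an appropriate truncation of the last block). From this data I would extract the canonical sequence of $[a,b]$ promised in the introduction, and perform a first round of simplifications using the relations $(s_1s_2)^m = (s_2s_3)^m = e$: long runs can be folded modulo $m$, and adjacent blocks can sometimes merge across an isolated occurrence of $s_1$ or $s_3$. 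This should realize the reduction stated in Corollary~\ref{cor-red}, and already yields a substantially cleaner description of $s^{a\times b}$.

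Next, I would feed the output of the previous step into the explicit \GS basis for $W(m)$ computed in the final section of the paper. Because \GS bases give a terminating, confluent rewriting, iterating the rules produces an honest standard word. The final and crucial step is to verify that two distinct reduced positive roots $[a,b] \neq [a',b']$ yield different standard words — equivalently, that one can read off the canonical sequence (or at least enough of it) from the standard word. A natural way to organize this is by induction on the \emph{level} of $[a,b]$ introduced in Section~\ref{can sequences}, using that the canonical sequence is strictly simplified at each inductive step while the associated standard word is correspondingly modified in a controlled manner.

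The main obstacle, I expect, is precisely this last injectivity step. The \GS rules need not respect the block structure coming from the Dyck path in any obvious way, so one must rule out the possibility that two inequivalent canonical sequences reduce accidentally to the same standard word. Overcoming this will likely require a careful tracking of how the block pattern deforms under each rewriting move, together with an invariant (such as the continued-fraction data or the level) that the reduction preserves; controlling these potential cross-block collapses is where the bulk of the technical work will reside.
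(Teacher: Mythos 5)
Your strategy is the same as the paper's: reduce $s^{a\times b}$ via the canonical sequences (Corollary~\ref{cor-red}), push the result to an $\mathcal S(m)$-standard word using the \GS basis of Section~\ref{GS}, and then argue that distinct roots yield distinct standard words, organized by the level. You have correctly located the crux of the matter in the last step. However, as written the proposal stops exactly there: you state that one must ``rule out the possibility that two inequivalent canonical sequences reduce accidentally to the same standard word'' and that this ``will likely require a careful tracking,'' but you do not supply the argument. That verification is not a routine bookkeeping exercise appended to the strategy --- it \emph{is} the proof, and the paper spends Sections~\ref{reductions} and \ref{reductions-5} carrying it out.

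Concretely, three ingredients are missing. First, one needs the standard words computed \emph{explicitly and exhaustively}: the paper does this case by case according to the level $L$, the parity of $L$, the value of $N_L$, and the type of $\bc_L$ (Lemmas~\ref{lem:1standodd} and \ref{lem:generalandodd}), with separate treatment of exceptional cases for $m=3,4,5$ where the generic expressions fail to be standard (Lemmas~\ref{lem:m=3 level1}, \ref{lem: m=3 odd L>1}, \ref{lem:m=3,4,5 even L>1}); without these explicit forms there is nothing to compare. Second, the comparison across different levels is handled not by a direct induction but by conjugating by $(s_2s_3s_1)^g$ to reduce to levels $1$ and $2$ (via Corollary~\ref{cor-red}(2)) and then distinguishing the finitely many possible prefixes \eqref{stw-1}--\eqref{stw-4} of the standard words; your proposed ``induction on the level'' would need something like this conjugation trick to get off the ground, since the standard word of a level-$L$ root is not obtained from that of a level-$(L-1)$ root by a simple rewriting move. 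Third, within a fixed level one needs the fact that the data $(N_1,\epsilon_1,\dots,N_k,\epsilon_k,\bc_{k+1})$ determines $[a,b]$ uniquely (Lemma~\ref{lem:ncninj}), which is what lets one conclude from ``different canonical data'' to ``different roots'' and back. Finally, note that the real roots $[F_n,F_{n-1}]$ must be treated separately (Lemma~\ref{lem-real}) and shown not to collide with the imaginary ones; your outline does not address this dichotomy. So the plan is sound and matches the paper, but the proof as proposed has a genuine gap at its decisive step.
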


As will be shown in Section \ref{sub-in}, Theorem \ref{thm-main} and Corollary \ref{cor-agb} below which establishes a dichotomy between the cases $a>b$ and $a<b$ together imply Theorem \ref{thm-in}. After all necessary constructions and computations are established, a proof of Theorem \ref{thm-main} will be completed in Section \ref{sub-pm}.
\medskip

	\section{Canonical sequences and levels} \label{can sequences}

	In this section we introduce canonical sequences and levels attached to a positive reduced root $[a,b]$ $(a \ge b)$ of $\mathcal H(m)$, which will play an important role in computations of $s^{a \times b}$ and in  classifying reduced roots and rigid reflections.
	Recall that a positive reduced real root $[a,b]$ $(a \ge b)$ of $\mathcal H(m)$ is of the form $[F_n, F_{n-1}]$. Since we already have $s^{F_n \times F_{n-1}}$ in Lemma \ref{sFnFn1}, we only consider imaginary roots of $\mathcal H(m)$ whenever this simplifies the presentation.

	\medskip

	We start with some definitions. (Recall Remark \ref{rmk-Ch}; {\em cf.} \cite{LS,Lo}.)

	\begin{definition}
	Let $N$ be a positive integer. Suppose that $\bc=(a_1,a_2, \dots , a_d)$ is a finite sequence such that $a_i=N$ or $N+1$ for all $i$. 
	\begin{itemize}
	\item[(1)] If $d>1$ and $(a_i,a_{i+1})\neq (N,N)$ for any $i $, then $\bc$ is called  {\it type $+$}.
	\item[(2)]  If $d>1$ and $(a_i,a_{i+1})\neq (N+1,N+1)$ for any $i$, then $\bc$ is called  {\it type $-$}.

	\item[(3)] If $d>1$ and $a_i \neq a_{i+1}$ for any $i$, then $\bc$ is called  {\it type $=$}.
	\item[(4)] If $d=1$, then $\bc$ is called  {\it type $0$}.

	\end{itemize}
	\end{definition}

	Throughout this section, let $[a,b]$ be a reduced positive root of $\mathcal H(m)$ with $a\ge b$. 

	\begin{definition}
	Define a sequence  $\bc_1 =( a_{1,1},a_{1,2}, \dots,a_{1,b} )$ of positive integers  to be such that 
	\begin{equation} \label{eqn-sab}
	\mathcal D^{a \times b}=h^{a_{1,1}}vh^{a_{1,2}}v\cdots h^{a_{1,b}}v,
	\end{equation}
	where $h$ is a horizontal edge and $v$ is a vertical edge and the product means concatenation.
	\end{definition}

	\begin{lem}\label{lem:1stseq}
	We have 
	\begin{equation}\label{eq:a_{1i}}
	a_{1,i}=\left\lceil\frac{ai}{b}\right\rceil-\left\lceil\frac{a(i-1)}{b}\right\rceil \quad (1 \leq i \leq b).
	\end{equation}
	\end{lem}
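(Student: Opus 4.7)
The plan is to recognize that the sequence $\bc_1$ records the lengths of consecutive horizontal runs in the maximal Dyck path, and then to identify each partial sum of $\bc_1$ with a ceiling function. Writing $X_i := a_{1,1} + a_{1,2} + \cdots + a_{1,i}$ with the convention $X_0 = 0$, the encoding $\mathcal D^{a\times b} = h^{a_{1,1}} v \cdots h^{a_{1,b}} v$ means that immediately after the $i$-th vertical step the path is at the lattice point $(X_i, i)$. Since $a_{1,i} = X_i - X_{i-1}$, it suffices to prove
\[
X_i = \left\lceil \frac{ai}{b} \right\rceil \qquad (0 \le i \le b).
\]

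First I would translate the geometric constraint into an inequality. Because $a \ge b$, the diagonal from $(0,0)$ to $(a,b)$ has slope $b/a \le 1$, and a lattice point $(x,y)$ lies weakly below this diagonal iff $ay \le bx$, i.e.\ $x \ge ai/b$ at height $y=i$. As $x$ is an integer, the minimal allowed horizontal coordinate at height $i$ is $\lceil ai/b \rceil$. The defining clause ``as close as possible to the diagonal without ever going above it'' picks out exactly this minimal value: at height $i$ the distance to the diagonal equals $bX_i/a - i$, which is minimized by making $X_i$ as small as possible subject to the inequality $X_i \ge ai/b$.

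Next I would check that the greedy points $(\lceil ai/b \rceil, i)$ for $i = 0, 1, \ldots, b$ actually trace out a lattice path of the required shape. Monotonicity of the ceiling function gives $\lceil ai/b \rceil \le \lceil a(i+1)/b \rceil$, and the boundary values are $X_0 = 0$ and $X_b = a$, so passing from step $i$ to step $i+1$ consists of a (possibly empty) run of horizontal moves of length $\lceil a(i+1)/b \rceil - \lceil ai/b \rceil$ followed by a single vertical move -- exactly the pattern $h^{a_{1,i+1}} v$. The hypothesis $\gcd(a,b) = 1$ inherent in $[a,b]$ being reduced ensures $ai/b \notin \mathbb Z$ for $0 < i < b$, so the diagonal meets no interior lattice point and the greedy path is the unique maximal Dyck path; no ambiguity arises.

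Combining these observations, $X_i = \lceil ai/b \rceil$ for every $i$, and subtracting consecutive partial sums gives the desired formula. There is no serious obstacle: the only point requiring a touch of care is justifying that the informal ``close to the diagonal from below'' description of $\mathcal D^{a\times b}$ really forces $X_i$ to be the minimal admissible integer at each height, which follows immediately from unwinding the definition.
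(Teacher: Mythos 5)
Your proposal is correct and follows essentially the same route as the paper: both arguments come down to the observation that the partial sum $a_{1,1}+\cdots+a_{1,i}$ is the least integer $x$ with $x \ge ai/b$, hence equals $\lceil ai/b\rceil$, and the formula follows by differencing (the paper phrases this as an induction on $i$, you phrase it directly for all partial sums at once). Your extra remarks on monotonicity of the ceilings and on $\gcd(a,b)=1$ are harmless refinements of the same idea.
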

	\begin{proof} 
	Since the slope of the line $\eta([a,b])$ is $\frac{b}{a}$, the number $a_{1,1}$ is the smallest positive integer such that $\frac{b}{a}a_{1,1} \geq 1$. Thus, we obtain $a_{1,1}=\lceil\frac{a}{b}\rceil$. Since the $y$-coordinate of the point on the line with the $x$-coordinate $a_{1,2}$ is greater than or equal to 2, the number $a_{1,2}$ is the smallest positive integer such that $\frac{b}{a}(a_{1,1}+a_{1,2})\geq2$. Thus, we obtain $a_{1,2}=\lceil\frac{2a}{b}\rceil-\lceil\frac{a}{b}\rceil.$ Now assume that $a_{1,i}=\lceil\frac{ai}{b}\rceil-\lceil\frac{a(i-1)}{b}\rceil$ for $1\leq i \leq k$. By a similar argument, $a_{1,k+1}$ is the smallest positive integer such that $\frac{b}{a}(a_{1,1}+\cdots+a_{1,k+1})\geq k+1$. Since $(a_{1,1}+\cdots+a_{1,k})=\lceil\frac{ak}{b}\rceil$, we obtain $a_{1,k+1}=\lceil\frac{a(k+1)}{b}\rceil-\lceil\frac{ak}{b}\rceil$. By induction, we are done.
	\end{proof}
	 
	\begin{lem}\label{lem:seqinj}
	The function $[a,b] \mapsto  ( a_{1,1},\dots,a_{1,b} )$ is an injection from the set of reduced positive roots into the set of finite sequences in $\mathbb{Z}_{>0}$. 
	\end{lem}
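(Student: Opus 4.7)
The plan is to recover both $a$ and $b$ directly from the sequence $( a_{1,1}, \dots, a_{1,b} )$, which will give injectivity immediately. Since the sequence has exactly $b$ entries by construction, the integer $b$ can be read off as the length of the sequence. It then suffices to recover $a$.

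To recover $a$, I would use the explicit formula from Lemma \ref{lem:1stseq} and observe that the sum telescopes. Indeed,
\[
\sum_{i=1}^{b} a_{1,i} \;=\; \sum_{i=1}^{b}\left(\left\lceil\frac{ai}{b}\right\rceil-\left\lceil\frac{a(i-1)}{b}\right\rceil\right) \;=\; \left\lceil\frac{ab}{b}\right\rceil-\left\lceil 0\right\rceil \;=\; a.
\]
Hence the sum of the entries equals $a$. So given the sequence, one reads off $b$ as its length and $a$ as the sum of its entries, which uniquely determines $[a,b]$. This proves that the map is injective.

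Alternatively, one may bypass the explicit formula entirely and argue combinatorially from \eqref{eqn-sab}: the decomposition $\mathcal{D}^{a\times b}=h^{a_{1,1}}vh^{a_{1,2}}v\cdots h^{a_{1,b}}v$ shows that the sequence records precisely the number of $h$-steps (equal to $a$) and $v$-steps (equal to $b$) of the maximal Dyck path. There is no substantial obstacle here; the lemma is really a bookkeeping statement about how the canonical sequence encodes the endpoint $(a,b)$ of the Dyck path.
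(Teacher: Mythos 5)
Your proof is correct and takes essentially the same route as the paper, whose entire proof is the one-line assertion that injectivity ``follows directly from \eqref{eq:a_{1i}}''; you have simply made explicit how $b$ is recovered as the length of the sequence and $a$ as the telescoping sum of its entries. No gaps here.
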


	\begin{proof} It follows directly from \eqref{eq:a_{1i}}.
	\end{proof}

	Recall the assumption that $[a,b]$ is a reduced positive root of $\mathcal H(m)$ with $a\ge b$. 

	\begin{lem}\label{lem:first}
	Assume that $\frac a b \neq m$. Let 
	\begin{equation} \label{n1r1}
	\frac{a}{b}=N_1+\rho_1 \quad \text{with $ N_1=\left\lfloor\frac{a}{b}\right\rfloor.$}
	\end{equation}
	 Then we have
	\begin{itemize}
	\item[(1)] $1\leq N_1 \leq m-1$;
	\item[(2)]$a_{1,i}=N_1$ or $N_1+1$ for all $1\leq i \leq b$;
	\item[(3)] $a_{1,1}=N_1+1$ and $a_{1,b}=N_1$;
	\item[(4)] $\bc_1=(a_{1,i})$ is of type $
	\begin{cases}
	+&\text{if $\rho_1 > \frac{1}{2}$,}\\
	-&\text{if $0<\rho_1 < \frac{1}{2}$,}\\
	=&\text{if $\rho_1 = \frac{1}{2}$.}
	\end{cases}
	$
	\end{itemize}
	\end{lem}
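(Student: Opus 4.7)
The whole lemma reduces to elementary arithmetic once we unpack the formula $a_{1,i}=\lceil ai/b\rceil-\lceil a(i-1)/b\rceil$ provided by Lemma~\ref{lem:1stseq}. Write $a=N_1 b+r$ with $0\le r<b$, so that $\rho_1=r/b$ and
\[
a_{1,i}=N_1+c_i,\qquad c_i:=\lceil ri/b\rceil-\lceil r(i-1)/b\rceil.
\]
Since $\gcd(a,b)=1$ forces $\gcd(r,b)=1$, and since parts (3) and (4) require a sequence of length at least two, I will work under the implicit assumption $b\ge 2$, which in turn yields $r\ne 0$ and $0<\rho_1<1$.

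For (1), the bound $a\ge b$ gives $N_1\ge 1$. For the upper bound I would apply the root condition $a^2+b^2-mab\le 1$ from \eqref{eqn-root}: if $a\ge mb$ then $a(a-mb)\ge 0$, hence $a^2+b^2-mab\ge b^2\ge 4$, a contradiction once $b\ge 2$; the remaining case $b=1$ reduces to $a(a-m)\le 0$, i.e.\ $a\le m$, and the hypothesis $a/b\ne m$ sharpens this to $a\le m-1$. Either way $a/b<m$ and $N_1\le m-1$.

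For (2), the increment $r/b<1$ forces each $c_i\in\{0,1\}$, so $a_{1,i}\in\{N_1,N_1+1\}$. For (3), the fraction $a/b$ is not an integer (since $b\ge 2$ and $\gcd(a,b)=1$), so $a_{1,1}=\lceil a/b\rceil=N_1+1$; writing $a(b-1)/b=a-a/b$ and using the identity $\lceil n-x\rceil=n-\lfloor x\rfloor$ for an integer $n$ and a non-integer real $x$, I obtain $\lceil a(b-1)/b\rceil=a-N_1$, whence $a_{1,b}=a-(a-N_1)=N_1$.

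For (4), the pivotal observation is that $c_i=1$ precisely when the half-open interval $I_i:=(r(i-1)/b,\,ri/b]$ contains an integer. Consequently $c_i=c_{i+1}=1$ (respectively $c_i=c_{i+1}=0$) demands two integers inside (respectively no integer inside) the length-$2r/b$ interval $I_i\cup I_{i+1}$. Since a half-open interval of length $L$ contains at most one integer when $L<1$, exactly one when $L=1$, and at least one when $L\ge 1$, this rules out two consecutive $1$'s when $\rho_1<1/2$, rules out two consecutive $0$'s when $\rho_1>1/2$, and rules out both when $\rho_1=1/2$, giving the three types exactly as stated. The only delicate point is the boundary $\rho_1=1/2$, which forces $b=2,\,r=1$ (by $\gcd(r,b)=1$); a direct computation then yields $\bc_1=(N_1+1,N_1)$, of type $=$. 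The main obstacle is nothing more serious than bookkeeping: tracking ceilings carefully and keeping the correct strict versus non-strict inequalities at the boundary $\rho_1=1/2$.
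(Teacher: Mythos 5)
Your proof is correct and follows essentially the same route as the paper: the root inequality $a^2+b^2-mab\le 1$ for (1), the ceiling-difference formula $a_{1,i}=N_1+\lceil\rho_1 i\rceil-\lceil\rho_1(i-1)\rceil$ for (2)--(3), and a length-versus-integer-count pigeonhole for (4), which is the paper's ``three points $\rho_1(i-1)<\rho_1 i<\rho_1(i+1)$ trapped in an open unit interval'' contradiction in different clothing. One small slip: $\lceil v\rceil-\lceil u\rceil$ counts the integers in $[u,v)$, not $(u,v]$ (your stated criterion for $c_i=1$ fails at $i=1$ and $i=b$, where an endpoint of the interval is an integer), but since every counting fact you invoke holds for either half-open convention, flipping the brackets repairs the statement without changing the argument.
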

	\begin{proof}
	(1) Since $[a,b]$ is a reduced positive root of $\mathcal{H}(m)$, we have $a^2+b^2-mab \leq 1$ from \eqref{eqn-root}. Thus we have $1 \leq  \frac{a}{b} \leq m$. Since we assume that $\frac{a}{b} \neq m$, we obtain the desired result.

	(2), (3) It is clear from the fact that $a_{1,i}=N_1+\lceil \rho_1i\rceil -\lceil \rho_1(i-1) \rceil$.

	(4) Suppose $\rho_1>\frac{1}{2}$.
	We will show that $(a_{1,i},a_{1,i+1})\neq(N_1,N_1)$ for all $i$. Note that  $a_{1,i}=N_1$ if and only if $\lceil \rho_1i\rceil -\lceil \rho_1(i-1) \rceil=0$.
	If $(a_{1,i},a_{1,i+1})=(N_1,N_1)$ for some $i$, then
	\begin{equation*}
	\lceil \rho_1(i+1)\rceil -\lceil \rho_1i \rceil=\lceil \rho_1i\rceil -\lceil \rho_1(i-1) \rceil=0 \
	\Longleftrightarrow \ \lceil \rho_1(i+1)\rceil=\lceil \rho_1i\rceil=\lceil \rho_1(i-1)\rceil
	\end{equation*}
	which implies that there exists an integer $t$ such that $t < \rho_1(i-1)<\rho_1i<\rho_1(i+1)< t+1$. This contradicts to $\rho_1 > \frac{1}{2}$.
	The other cases can be proved similarly.
	\end{proof}

	\begin{definition}\label{def:a_n}
	Let $[a,b]$ be a reduced positive root of $\mathcal H(m)$ with $a\ge b$. For $n \ge 1$, define inductively $\rho_n, N_n$ and $\bc_n= (a_{n,1},a_{n,2},\cdots,a_{n,d_n})$ as follows.
	 \begin{itemize}
	\item[(0)] Note that $\rho_1, N_1$ and $\bc_1=(a_{1,1}, \dots , a_{1,d_1})$ with $d_1=b$ are already defined in \eqref{eqn-sab} and \eqref{n1r1}. 

	\item[(1)] If $\bc_{n-1}$ is of type $=$ or $0$, stop the process. Otherwise, $\bc_n=( a_{n,i} )_{1\leq i \leq d_n}$ are defined to be the sequence  recording the numbers of consecutive occurrences of
	\begin{equation*}
	\begin{cases}
	 \text{ $(N_{n-1}+1)$'s in $\bc_{n-1}$} & \text{if $\bc_{n-1}$ is of type $+$,}\\
	 \text{ $N_{n-1}$'s in $\bc_{n-1}$} & \text{if $\bc_{n-1}$ is of type $-$,}\\
	\end{cases}
	\end{equation*}
	 where $d_n$ is the number of $N_{n-1}$ (resp. $N_{n-1}+1$) in $\bc_{n-1}$ if it is of type $+$ (resp. type $-$).  
	\item[(2)] $\rho_n$ is defined to be a rational number with $0 \leq \rho_n <1$ and $N_n$ is to be a positive integer such that
	 \begin{equation*}
	N_{n}+\rho_{n}=
	\begin{cases}
	\frac{\rho_{n-1}}{1-\rho_{n-1}} &\text{if $\rho_{n-1} \ge \frac{1}{2}$}, \vspace*{0.2 cm}\\
	\frac{1-\rho_{n-1}}{\rho_{n-1}} &\text{if $\rho_{n-1} < \frac{1}{2}$}.
	\end{cases}
	\end{equation*}
	 \end{itemize}
	The sequences $\bc_n$, $n=1, 2, \dots $, are called the {\em canonical sequences} of $[a,b]$.
	 \end{definition}

	\begin{example}\label{ex:[5,3]}
	(1) Let $m=3$ and $[a,b]=[5,3]$. Then $N_1=1$, $\rho_1=\frac{2}{3}$ and $d_1=b=3$.
	From the definition or  by Lemma \ref{lem:1stseq}, the sequence $\bc_1$ is given by 
	 \begin{equation*}
	 \bc_1=( a_{1,1},a_{1,2},a_{1,3} )=( 2,2,1 ),
	 \end{equation*}
	 which is of type $+$.  Since
	 \begin{equation*}
	 \frac{\rho_1}{1-\rho_1}=\frac{\frac{2}{3}}{1-\frac{2}{3}}=2=N_2,
	 \end{equation*}
	 we have $\bc_2=(a_{2,1})=(2)$, which is of type $0$.

	(2) Let $m=3$ and $[a,b]=[8,5]$. In this case, $N_1=1, \rho_1=\frac{3}{5}, d_1=b=5$ and 
	\begin{equation*}
	\bc_1=(a_{1,1},a_{1,2},a_{1,3},a_{1,4},a_{1,5})=(2,2,1,2,1),
	\end{equation*}
	 which is of type $+$.
	 Then $N_2=1, \rho_2=\frac{1}{2},  d_2=2$ and 
	 \begin{equation*}
	\bc_2=(a_{2,1},a_{2,2})=(2,1),
	\end{equation*}
	 which is of type $=$.
	 
	(3) Assume $m=3$ and $[a,b]=[59,23]$. Then we have
	\begin{align*}
	\bc_1 &=(
	3, 3, 2, 3, 2, 3, 2, 3, 3, 2, 3, 2, 3, 2, 3, 3, 2, 3, 2, 3, 2, 3, 2),& N_1&=2, & \rho_1&=13/23, &\text{type}& \ +, \\ \bc_2 &= (
	2, 1, 1, 2, 1, 1, 2, 1, 1, 1),&N_2&=1,&\rho_2&=3/10,&\text{type}& \ - , \\ \bc_3 &=(
	2, 2, 3), &N_3&=2,&\rho_3&=1/3,&\text{type}& \ - , \\ \bc_4&=(
	2),&N_4&=2,&\rho_4&=0,&\text{type}& \ 0 . 
	\end{align*}

	(4) Suppose $m=5$ and $[a,b]=[62,13]$. Then we obtain
	\begin{align*}\bc_1 &=(5, 5, 5, 5, 4, 5, 5, 5, 4, 5, 5, 5, 4),  &N_1&=4,&\rho_1&=10/13,&\text{type}& \ + , \\  \bc_2 &= (4, 3, 3), &N_2&=3,&\rho_2&=1/3,&\text{type}& \ - ,  \\ \bc_3&=  (2), &N_3&=2,&\rho_3&=0,&\text{type}& \ 0 .
	\end{align*}
	\end{example}

	For a positive rational number $r$, let $D(r)=q$ when $r=\frac{p}{q}$ and $p,q$ are relatively prime integers.   
	\begin{lem}\label{lem:a_{n,i}}
	The following holds for $n \ge 2$. 
	 \begin{itemize}
	 \item[(1)]  $\bc_n=(a_{n,i})_{1 \le i \le d_n}$ is given by 
	\begin{equation*}
	a_{n,i}=
	\begin{cases}
	\left\lceil\frac{\rho_{n-1} i}{1-\rho_{n-1}}\right\rceil-\left\lceil\frac{\rho_{n-1}(i-1)}{1-\rho_{n-1}}\right\rceil, &\text{when $\bc_{n-1}$  is of type $+$}, \vspace*{0.3 cm} \\ 
	\left\lfloor\frac{(1-\rho_{n-1})i}{\rho_{n-1}}\right\rfloor-\left\lfloor\frac{(1-\rho_{n-1})(i-1)}{\rho_{n-1}}\right\rfloor, & \text{when $\bc_{n-1}$ is of type $-$},\\
	\end{cases}
	\end{equation*}
	for $1 \leq i \leq d_n$, and we get $d_n=D(\rho_n)$.
	\item[(2)]We have $a_{n,i}=N_n$ or $N_n+1$ for $1\leq i \leq d_n$.
	\item[(3)]  $a_{n,1}=N_n+1$ {\rm (}resp. $N_n${\rm )} and $a_{1,d_n}=N_n$ {\rm (}resp. $N_n+1$ {\rm )} if  $\rho_n\neq0$ and $\bc_{n-1}$ is of type $+$ {\rm (}resp. type $-${\rm )}.
	\item[(4)] $\bc_n=(a_{n,i})$ is of type $
	\begin{cases}
	+&\text{if $\rho_n > \frac{1}{2}$,}\\
	-&\text{if $0<\rho_n < \frac{1}{2}$,}\\
	= &\text{if $\rho_n = \frac{1}{2}$,}\\
	0 &\text{if $\rho_n = 0$.}
	\end{cases}
	$
	\end{itemize}
	 \end{lem}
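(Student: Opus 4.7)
My plan is to induct on $n \geq 2$, with the base case $n = 2$ powered by Lemma~\ref{lem:1stseq} and a direct run-counting argument, and the inductive step being a verbatim iteration. The underlying geometric idea is self-similarity: I expect each $\bc_n$ to record the run-lengths of a $\{0,1\}$-valued Beatty-like sequence $b_i = \lceil \rho i \rceil - \lceil \rho(i-1)\rceil$ (or the floor analog) encoding whether the $i$-th entry of $\bc_{n-1}$ equals $N_{n-1}$ or $N_{n-1}+1$; and this run-length sequence should itself be Beatty-like with parameter $\rho/(1-\rho)$ or $(1-\rho)/\rho$, precisely matching the recursion defining $N_n + \rho_n$.

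For the base case I would take $\bc_1$ of type $+$ first (the type $-$ case being symmetric with floors in place of ceilings). By Lemma~\ref{lem:1stseq}, $a_{1,i} = N_1 + b_i$ with $b_i := \lceil \rho_1 i\rceil - \lceil \rho_1(i-1)\rceil \in \{0,1\}$. Letting $L_k$ be the position of the $k$-th zero of $(b_i)$, the identity $\lfloor(1-\rho_1) j\rfloor = j - \lceil \rho_1 j\rceil$ (valid for all positive integers $j$) shows that the number of zeros among $b_1,\dots,b_j$ is $\lfloor(1-\rho_1)j\rfloor$, whence $L_k = \lceil k/(1-\rho_1)\rceil$. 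Since $a_{2,k} = L_k - L_{k-1} - 1$ (with $L_0 := 0$) is the length of the $k$-th run of $(N_1+1)$'s, the identity $1/(1-\rho_1) = 1 + \rho_1/(1-\rho_1)$ collapses this to
\[
a_{2,k} = \lceil k\rho_1/(1-\rho_1)\rceil - \lceil(k-1)\rho_1/(1-\rho_1)\rceil,
\]
which is (1). Setting $N_2 + \rho_2 := \rho_1/(1-\rho_1)$ with $\rho_2 \in [0,1)$, part (2) then follows from $a_{2,k} = N_2 + \lceil k\rho_2\rceil - \lceil(k-1)\rho_2\rceil$, part (4) from the same analysis as Lemma~\ref{lem:first}(4) applied to $\rho_2$, and part (3) from direct evaluation at $k = 1$ and $k = d_2$. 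For the denominator claim, I write $\rho_1 = p/b$ in lowest terms (valid since $\gcd(a,b) = 1$) and use $\gcd(p, b-p) = \gcd(p, b) = 1$ to reduce $\rho_1/(1-\rho_1) = p/(b-p)$ and conclude $D(\rho_2) = b - p = d_2$.

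For the inductive step, by the IH part (1) applied to $\bc_{n-1}$ I can again write $a_{n-1,i} = N_{n-1} + b_i$ for a Beatty-like sequence $b_i$ (of ceiling or floor type according as $\bc_{n-2}$ is type $+$ or $-$), and the same run-counting argument then goes through verbatim with $(d_{n-1}, N_{n-1}, \rho_{n-1})$ in the roles of $(b, N_1, \rho_1)$. The principal obstacle I anticipate is the bookkeeping of the four sub-cases arising from the independent binary choices ``type of $\bc_{n-1}$'' and ``type of $\bc_{n-2}$'': the former decides whether I count runs of $N_{n-1}$'s or $(N_{n-1}+1)$'s, while the latter decides the boundary values $a_{n-1,1}$ and $a_{n-1, d_{n-1}}$ and hence whether the first or last counted run is truncated. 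Verifying that the ceiling/floor-difference formula recovers the correct boundary entries (part (3)) in each of the four resulting configurations will be routine but will need care.
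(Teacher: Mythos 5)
Your overall strategy --- reading $\bc_n$ off as the run-length sequence of a $\{0,1\}$-valued Beatty-type sequence and showing that the run lengths are again Beatty-type with parameter $\rho_{n-1}/(1-\rho_{n-1})$ or $(1-\rho_{n-1})/\rho_{n-1}$ --- is the same as the paper's, which performs the identical count by locating the entries equal to $N_{n-1}$ through explicit inequalities rather than through your formula $L_k=\lceil k/(1-\rho_1)\rceil$ for the position of the $k$-th such entry. Your base case $n=2$ is correct and complete: the identity $\lfloor(1-\rho_1)j\rfloor=j-\lceil\rho_1 j\rceil$, the evaluation of $L_k$, the telescoping to the ceiling-difference, and the computation $D(\rho_2)=b-p=d_2$ all check out, and parts (2)--(4) follow as you indicate.

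The gap is in the inductive step, exactly at the point you defer as ``routine but will need care.'' It is not routine: in the two configurations with $\bc_{n-2}$ of type $-$, the run count does not return the formula stated in (1). Indeed, if $\bc_{n-2}$ is of type $-$, then by the inductive hypothesis $a_{n-1,i}=N_{n-1}+\lfloor\rho_{n-1}i\rfloor-\lfloor\rho_{n-1}(i-1)\rfloor$ is of floor type, so $\bc_{n-1}$ \emph{begins} with $N_{n-1}$ and \emph{ends} with $N_{n-1}+1$, the counted runs sit after (not before) the separating entries, and your computation produces the ceiling/floor-\emph{swapped} version of the formula in (1) --- which, since $\bigl(\lfloor k\theta\rfloor-\lfloor(k-1)\theta\rfloor\bigr)_k$ is the reversal of $\bigl(\lceil k\theta\rceil-\lceil(k-1)\theta\rceil\bigr)_k$, is a genuinely different sequence whenever $\rho_n\neq 0$ and $d_n>1$; correspondingly the boundary values in (3) come out interchanged. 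A concrete instance: for $m=3$ and $[a,b]=[18,13]$ one gets $\bc_1=(2,1,2,1,1,2,1,2,1,1,2,1,1)$ of type $-$, $\bc_2=(1,2,1,2,2)$ of type $+$, and $\bc_3=(1,2)$, whereas the ceiling formula in (1) and the claim $a_{3,1}=N_3+1$ in (3) predict $(2,1)$. So carrying out your four-case verification would refute, not confirm, two of the cases; to close the induction you must either record the dual formula when $\bc_{n-2}$ is of type $-$ or show those configurations do not occur where the lemma is applied. (The paper's own proof makes the same silent assumption, writing $a_{n-1,i}$ as a ceiling-difference whenever $\bc_{n-1}$ is of type $+$, which presupposes $\bc_{n-2}$ is of type $+$; the statement is in fact only invoked later in the compatible configurations. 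But as a free-standing induction your plan, like the paper's argument, does not establish (1) and (3) as stated for all $n$.)
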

	 \begin{proof}
	 (1)
	 Suppose that $\bc_{n-1}$ is of type $+$.
	We have 
	 \begin{equation*}
	 \begin{split}
	 a_{n-1,i}&=\lceil (N_{n-1}+\rho_{n-1})i \rceil-\lceil (N_{n-1}+\rho_{n-1})(i-1) \rceil\\
	 &=N_{n-1}+\lceil\rho_{n-1}i\rceil-\lceil \rho_{n-1}(i-1) \rceil.
	 \end{split}
	 \end{equation*}
	 Since $a_{n,1}$ is the number of first successive  $N_{n-1}+1$ in the sequence $\bc_{n-1}$, we have
	\begin{equation*}
	\begin{split}
	\lceil \rho_{n-1}i \rceil-\lceil \rho_{n-1}(i-1) \rceil=
	\begin{cases}
	1 &\text{if $i=1,2,\cdots,a_{n,1}$},\\
	0 &\text{if $i=a_{n,1}+1$}.
	\end{cases}
	\end{split}
	\end{equation*}
	This implies that 
	\begin{equation*}
	  (a_{n,1}-1)\rho_{n-1} <a_{n,1}-1<a_{n,1}\rho_{n-1}<(a_{n,1}+1)\rho_{n-1} \leq a_{n,1}.
	\end{equation*}
	  So we obtain 
	\begin{equation*}
	  a_{n,1}\geq 1, \quad\frac{\rho_{n-1}}{1-\rho_{n-1}} \leq a_{n,1} < \frac{1}{1-\rho_{n-1}},
	\end{equation*}
	  and hence $a_{n,1}=\lceil \frac{\rho_{n-1}}{1-\rho_{n-1}}\rceil$. 
	  
	  Since $a_{n,2}$ is the number of successive $N_{n-1} +1$  in $\bc_{n-1}$ between the first and the second $N_{n-1}$, we have
	  \begin{equation*}
	  \begin{split}
	   \lceil \rho_{n-1}i \rceil-\lceil \rho_{n-1}(i-1) \rceil=
	   \begin{cases}
	 1 &\text{ if $i=a_{n,1}+2,a_{n,1}+3,\cdots,a_{n,1}+a_{n,2}+1$}, \\
	 0 &\text{ if $i=a_{n,1}+a_{n,2}+2$}.
	 \end{cases}
	  \end{split}
	  \end{equation*}
	  This implies that
	  \begin{equation*}
	  (a_{n,1}+a_{n,2})\rho_{n-1} < a_{n,1}+a_{n,2}-1 < (a_{n,1}+a_{n,2}+1)\rho_{n-1} < (a_{n,1}+a_{n,2}+2)\rho_{n-1} \leq a_{n,1}+a_{n,2}.
	  \end{equation*}
	  Hence, we have 
	  \begin{equation*}
	   \frac{2\rho_{n-1}}{1-\rho_{n-1}}-a_{n,1} \leq a_{n,2} < -a_{n,1}+\frac{\rho_{n-1}+1}{1-\rho_{n-1}} .
	   \end{equation*}
	   Since $a_{n,2}$ is an integer and $\frac{\rho_{n-1}+1}{1-\rho_{n-1}}- \frac{2\rho_{n-1}}{1-\rho_{n-1}}=1$, we obtain
	   \begin{equation*}
	   a_{n,2}=\left\lceil\frac{2\rho_{n-1}}{1-\rho_{n-1}}\right\rceil-\left\lceil\frac{\rho_{n-1}}{1-\rho_{n-1}}\right\rceil.
	   \end{equation*}

	By a similar argument, we have 
	  \begin{equation*}
	  a_{n,i}=\left\lceil\frac{\rho_{n-1}i}{1-\rho_{n-1}}\right\rceil-\left\lceil\frac{\rho_{n-1}(i-1)}{1-\rho_{n-1}}\right\rceil \qquad \text{ for } i \geq 3 .
	  \end{equation*}
	  
	  Next, we show that $d_n=D(\rho_n)$.
	  By the definition of $\bc_n=(a_{n,i})_{1 \le i \le d_n }$, we have
	  \begin{equation*}
	  d_{n-1}=\sum_{1\leq i \leq d_n}a_{n,i}+d_n=\left\lceil \frac{\rho_{n-1}d_n}{1-\rho_{n-1}}\right\rceil +d_n.
	  \end{equation*}
	  Suppose that $\rho_{n-1}=\frac{e_{n-1}}{d_{n-1}}$, where $e_{n-1}$ and $d_{n-1}$ are relatively prime. Then we have
	  \begin{equation*}
	 N_n+\rho_n= \frac{\rho_{n-1}}{1-\rho_{n-1}}=\frac{e_{n-1}}{d_{n-1}-e_{n-1}}.
	  \end{equation*}
	  Since $d_{n-1}$ and $e_{n-1}$ are relatively prime, $D(\rho_n)=D(\frac{e_{n-1}}{d_{n-1}-e_{n-1}})=d_{n-1}-e_{n-1}$.
	 Hence, we obtain
	 \begin{equation*}
	 d_{n-1}=\left\lceil \frac{e_{n-1}d_n}{d_{n-1}-e_{n-1}}\right\rceil +d_n \ \
	\Longleftrightarrow \ \ d_n=d_{n-1}-e_{n-1}=D(\rho_n).
	 \end{equation*}
	  
	 The proof for the case when $\bc_{n-1}$ is of type $-$ is similar, and we omit the details.
	 
	  (2) Suppose that $\bc_{n-1}$ is of type $+$.
	  Since $\frac{\rho_{n-1}}{1-\rho_{n-1}}=N_n+\rho_n$,  we have
	\begin{equation*}
	a_{n,i}=\lceil N_ni+\rho_ni \rceil -\lceil N_n(i-1)+\rho_n(i-1) \rceil=N_n+\lceil \rho_ni\rceil -\lceil \rho_n(i-1)\rceil.
	\end{equation*}
	Since $0 \leq \rho_n <1$, we obtain $a_{n,i}=N_n$ or $N_n+1$ for $1\leq i \leq d_n$.
	The case $\bc_{n-1}$ is of type $-$ is similar.

	(3) Suppose that $\rho_n \neq 0$ and $\bc_{n-1}$ is of type $+$.
	Write $\rho_{n-1}= \frac {e_{n-1}}{d_{n-1}}$ with $e_{n-1}$ and $d_{n-1}$ relatively prime. Then $N_n+\rho_n = \frac {e_{n-1}}{d_{n-1} - e_{n-1}}$. Since $\rho_n \neq 0$, we have
	\begin{equation*}
	\begin{split}
	a_{n,1}=&N_n+\left\lceil \rho_n\right\rceil= N_n+1, \\
	a_{n,d_n}=&\left\lceil (N_n+ \rho_n) d_n \right\rceil -\left\lceil(N_n+ \rho_n)(d_n-1)\right\rceil\\
	=&\left\lceil \frac{e_{n-1}}{d_{n-1}-e_{n-1}}d_n\right\rceil -\left\lceil \frac{e_{n-1}}{d_{n-1}-e_{n-1}}(d_n-1)\right\rceil\\
	=&\left\lceil e_{n-1} \right\rceil - \left\lceil e_{n-1}-N_n-\rho_n \right\rceil 
	=e_{n-1}-e_{n-1}+N_n=N_n.
	  \end{split}
	  \end{equation*}
	The case $\bc_{n-1}$ is of type $-$ is similar.

	(4) The proof is similar to that of Lemma \ref{lem:first} (4), and we omit the details.
	\end{proof}

\begin{lem} \label{lem:ncninj} Let $[a,b]$ be a reduced positive root of $\mathcal H(m)$ with $a \ge b$, and $N_k$ and $\bc_k$ be defined as in Definition \ref{def:a_n}. Denote the type of $\bc_k$ by $\epsilon_k$. For each $k \ge 0$, the data
\[ (N_1,\epsilon_1, N_2, \epsilon_2, \dots , N_k, \epsilon_k, \bc_{k+1})\] determines $[a,b]$ uniquely.
\end{lem}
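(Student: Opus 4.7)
The plan is to invert the recursion in Definition \ref{def:a_n}: from the given tuple I will reconstruct $\bc_1$, after which Lemma \ref{lem:seqinj} forces uniqueness of $[a,b]$. The case $k=0$ is exactly Lemma \ref{lem:seqinj}. For $k \ge 1$, note that the very existence of $\bc_{k+1}$ forces $\epsilon_j \in \{+,-\}$ for every $1 \le j \le k$, since otherwise the process in Definition \ref{def:a_n} would have halted before producing level $k+1$.

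The core step is to show that, for each $j \in \{1, \dots, k\}$, the triple $(N_j, \epsilon_j, \bc_{j+1})$ determines $\bc_j$ uniquely. Assume first $\epsilon_j = +$. By Lemma \ref{lem:a_{n,i}}(2) (or Lemma \ref{lem:first}(2) when $j=1$), every entry of $\bc_j$ equals $N_j$ or $N_j+1$, and since $\epsilon_j = +$ there are no two consecutive copies of $N_j$. By the very definition of the canonical sequences, $\bc_{j+1}=(a_{j+1,1}, \dots, a_{j+1,d_{j+1}})$ records, in order, the lengths of the maximal runs of $N_j+1$'s in $\bc_j$. Finally, since the recursion did produce $\bc_{j+1}$ we have $\rho_j \ne 0$, so Lemma \ref{lem:a_{n,i}}(3) (and Lemma \ref{lem:first}(3) when $j=1$) tells us that $\bc_j$ begins with $N_j+1$ and ends with $N_j$. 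These three pieces of information pin down
\[
\bc_j = \bigl(\underbrace{N_j+1, \ldots, N_j+1}_{a_{j+1,1}},\, N_j,\, \underbrace{N_j+1, \ldots, N_j+1}_{a_{j+1,2}},\, N_j,\, \ldots,\, \underbrace{N_j+1, \ldots, N_j+1}_{a_{j+1,d_{j+1}}},\, N_j\bigr).
\]
The case $\epsilon_j = -$ is entirely symmetric, with the roles of $N_j$ and $N_j+1$ swapped throughout.

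Iterating this reconstruction from $j=k$ down to $j=1$ produces $\bc_1$, and then $[a,b]$ is pinned down by Lemma \ref{lem:seqinj} (equivalently, the length of $\bc_1$ equals $b$ and the sum of its entries telescopes to $a$ by Lemma \ref{lem:1stseq}). The only subtle point is the boundary behavior of $\bc_j$ at its two ends, which is precisely what makes the pair $(N_j, \epsilon_j)$ suffice without any additional data such as $\rho_j$ or the length $d_j$; this is controlled cleanly by parts (2)–(3) of Lemma \ref{lem:a_{n,i}} together with Lemma \ref{lem:first}. Beyond this bookkeeping there is no substantial obstacle.
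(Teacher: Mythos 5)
Your overall strategy---inverting the recursion of Definition \ref{def:a_n} to recover $\bc_1$ and then invoking Lemma \ref{lem:seqinj}---is the same as the paper's, and the run-length bookkeeping is fine. The gap is in the step that pins down the two ends of $\bc_j$. You assert that $\epsilon_j=+$ forces $\bc_j$ to begin with $N_j+1$ and end with $N_j$ (with the roles swapped when $\epsilon_j=-$), citing Lemma \ref{lem:a_{n,i}}(3). That lemma conditions on the type of the \emph{preceding} sequence $\bc_{j-1}$, not on $\epsilon_j$: the sequence $\bc_j$ begins with $N_j+1$ and ends with $N_j$ when $\bc_{j-1}$ is of type $+$, and begins with $N_j$ and ends with $N_j+1$ when $\bc_{j-1}$ is of type $-$, irrespective of the type of $\bc_j$ itself; likewise Lemma \ref{lem:first}(3) says $\bc_1$ \emph{always} begins with $N_1+1$ and ends with $N_1$. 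Your rule already fails on the paper's own examples: for $m=3$, $[a,b]=[17,7]$ (Example \ref{exa-[17,7]}) one has $\bc_1=(3,2,3,2,3,2,2)$ of type $-$ with $N_1=2$, which begins with $N_1+1$ and ends with $N_1$, the opposite of what your swapped rule predicts; and for $[59,23]$ (Example \ref{ex:[5,3]}(3)) the sequence $\bc_2=(2,1,1,2,1,1,2,1,1,1)$ is of type $-$ but begins with $N_2+1$. Applying your displayed reconstruction to $(N_1,\epsilon_1,\bc_2)=(2,-,(1,1,2))$ in the first example returns $(2,3,2,3,2,2,3)$ rather than the true $(3,2,3,2,3,2,2)$, so as written the argument does not show that the data determines $\bc_j$: knowing only $(N_j,\epsilon_j,\bc_{j+1})$ there remain two admissible interleavings of the runs with the singletons, differing precisely in which value sits at the two ends.

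The repair requires no new idea, because the missing information is already contained in the tuple: when reconstructing $\bc_j$ for $j\ge 2$ you must read the boundary behaviour off $\epsilon_{j-1}$ via Lemma \ref{lem:a_{n,i}}(3), and for $j=1$ off Lemma \ref{lem:first}(3), which fixes the ends unconditionally. With that correction the two candidate interleavings are distinguished at every stage, the downward induction recovers $\bc_1$, and Lemma \ref{lem:seqinj} finishes the proof exactly as you intend.
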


\begin{proof}
By Lemma  \ref{lem:a_{n,i}} and Definition \ref{def:a_n}, we obtain $\bc_{k}$ from $(N_k, \epsilon_k, \bc_{k+1})$ and continue the process to obtain $\bc_1$. Now the assertion follows from 
Lemma \ref{lem:seqinj}.
\end{proof}

	After establishing another lemma below, we will define the level of $[a,b]$.
	 
	\begin{lem}\label{lem:range}
	Assume that $[a,b]$ is an imaginary reduced positive root of $\mathcal H(m)$ with $a\ge b$. For $n \ge 2$, if $N_k=m-2+\delta_{1,k}$ and $\bc_k$ is of type $+$ for $1\leq k \leq n-1$, then $$1< N_n+\rho_n < \gamma-1,$$ where we set 
	$ \gamma :=\frac{m+\sqrt{m^2-4}}{2} .$ 
	In particular, $1\leq N_{n} \leq m-2$ for $n \ge 2$. 
	\end{lem}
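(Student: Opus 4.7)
The plan is induction on $n \geq 2$, exploiting two algebraic features of $\gamma$: since $\gamma^2 - m\gamma + 1 = 0$ we have $m - 1/\gamma = \gamma$, and $\gamma$ is irrational for $m \geq 3$. The case $m = 2$ is vacuous, since the only imaginary reduced positive root is $[1,1]$, whose $\bc_1 = (1)$ has length one and is therefore of type $0$, not type $+$; so assume $m \geq 3$.

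Abbreviate $x_k := N_k + \rho_k$. By \eqref{eqn-root}, $[a,b]$ imaginary with $a \geq b$ translates into $x_1 = a/b \in [\bar\gamma, \gamma]$, where $\bar\gamma := 1/\gamma = m - \gamma$; irrationality of $\gamma$ together with rationality of $x_1$ sharpens this to $x_1 \in (\bar\gamma, \gamma)$. For the base case $n = 2$, the hypothesis $N_1 = m - 1$ and $\bc_1$ of type $+$ places $x_1 \in (m - 1/2, \gamma)$, and Definition \ref{def:a_n}(2) yields $x_2 = \rho_1/(1-\rho_1) = (x_1 - m + 1)/(m - x_1)$. A short algebraic manipulation shows
\begin{equation*}
x_2 > 1 \iff x_1 > m - \tfrac{1}{2}, \qquad x_2 < \gamma - 1 \iff x_1 < m - 1/\gamma = \gamma,
\end{equation*}
and both conditions are supplied by the hypothesis, where the crucial step is using $\gamma^2 = m\gamma - 1$ to identify $m - 1/\gamma$ with $\gamma$.

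For the inductive step with $n \geq 3$, the inductive hypothesis gives $1 < x_{n-1} < \gamma - 1$. The current hypothesis $N_{n-1} = m - 2$ and $\bc_{n-1}$ of type $+$ then yields $\rho_{n-1} \in (1/2,\, \gamma - m + 1) = (1/2,\, 1 - 1/\gamma)$. Plugging into $x_n = \rho_{n-1}/(1 - \rho_{n-1})$, the two required inequalities reduce to
\begin{equation*}
x_n > 1 \iff \rho_{n-1} > \tfrac{1}{2}, \qquad x_n < \gamma - 1 \iff \gamma\,\rho_{n-1} < \gamma - 1,
\end{equation*}
both of which are already in hand. Conceptually, the choice of bound $\gamma - 1$ is forced: it is the unique fixed point $x > 1$ of the Möbius iteration $x \mapsto (x - m + 2)/(m - 1 - x)$ induced by iterating a type $+$ step with $N_k = m - 2$, so it is preserved by the recursion.

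The ``in particular'' statement then follows immediately: $N_n \geq 1$ from $N_n + \rho_n > 1$ with $\rho_n < 1$, and $N_n \leq m - 2$ from $N_n + \rho_n < \gamma - 1$ together with the elementary estimate $\gamma - 1 < m - 1$ (equivalent to $\sqrt{m^2 - 4} < m$). I do not anticipate a serious obstacle; the only delicate points are invoking irrationality of $\gamma$ to strictify $x_1 \leq \gamma$, and recognizing that $\gamma - 1$ is the fixed point that makes the induction close.
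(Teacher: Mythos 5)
Your proof is correct and follows essentially the same route as the paper's: induction on $n$, pushing the bounds $1 < N_k+\rho_k < \gamma-1$ through the map $\rho \mapsto \rho/(1-\rho)$ and using $\gamma(m-\gamma)=1$ to see that $\gamma-1$ is preserved. The only cosmetic differences are that you justify the strict inequality $a/b<\gamma$ via irrationality of $\gamma$ (the paper excludes the real roots $[F_i,F_{i-1}]$ instead) and you dispose of $m=2$ explicitly, neither of which changes the argument.
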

	\begin{proof}
	We use induction on $n$. It follows from the assumptions and Lemma \ref{lem:first} that $\frac{a}{b}=N_1+\rho_1=m-1+\rho_1$ with $\frac{1}{2} < \rho_1 <1$ and
	$N_2+\rho_2=\frac{\rho_1}{1-\rho_1}$.
	Since $[a,b]$ is a reduced positive root of $\mathcal H(m)$ and $[a,b] \neq [F_i,F_{i-1}]$ for any $i =2,3,\dots$, we have $1 \leq \frac{a}{b} <\gamma$ and 
	$\frac{1}{2}< \rho_1 <\gamma-(m-1)$. Note that $\gamma ( m-\gamma)=1$. 
	Since $y=\frac{x}{1-x}$ is an increasing  function for $0 \leq x < 1$, we obtain
	\begin{equation*}
	\frac{\frac{1}{2}}{1-\frac{1}{2}}=1 <N_2+\rho_2=\frac{\rho_1}{1-\rho_1} <\frac{\gamma-m+1}{-\gamma+m}=\gamma-1 <m-1.
	\end{equation*}
	Hence, $1\leq N_2 \leq m-2$.

	Now assume that we have $1< N_{n-1}+\rho_{n-1} < \gamma-1$.
	Since $N_{n-1}=m-2$ and $\bc_{n-1}$ is of type $+$, we have $\frac{1}{2} < \rho_{n-1} < \gamma-(m-1)$.
	By the same argument as in the case $n=2$, we have
	\begin{equation*}
	1< N_n+\rho_n < \gamma-1 \quad \text{ and } \quad 1 \le N_n \le m-2.
	\end{equation*}
	\end{proof}

	\medskip

	 Let us consider the sequence $\{\gamma_n\}$ given by
	\begin{equation*}
	\begin{split}
	&\gamma_0=0, \qquad \gamma_1=m-\frac{1}{2},\qquad
	\gamma_n=m-\frac 1 {\gamma_{n-1}}\quad (n\geq 2).
	 \end{split}
	 \end{equation*}
	 It is straightforward to check that $\gamma_n <\gamma_{n+1} <\gamma$ for $n \geq 1$, and
	 \begin{equation*}
	\gamma_n \rightarrow \gamma  \qquad \text{ as }\ n \rightarrow \infty,
	\end{equation*}
	where $\gamma =  \frac{m+\sqrt{m^2-4}}{2}$ as before.
	 
	\begin{definition}\label{def:level}
	Let $[a,b]$ be an imaginary reduced positive root of $\mathcal H(m)$ with $a \ge b$.
	Then the {\em level} $L$ of $[a,b]$ is defined to be the positive integer uniquely determined by the inequalities
	\begin{equation*}
	\gamma_{L-1} < \frac{a}{b}\leq \gamma_{L}.
	\end{equation*} 
	\end{definition}

	 \begin{example}
	 Let $m=3$ and $[a,b]=[339,130]$. Then we have $\gamma_1=\frac{5}{2}, \gamma_2=\frac{13}{5}, \gamma_3=\frac{34}{13}$ and $\gamma_2<\frac{a}{b}<\gamma_3$. Thus the level of $[a,b]$ is 3. 
	 \end{example}

	\begin{prop} \label{prop-Nk}
	Let $[a,b]$ be an imaginary reduced positive root of $\mathcal H(m)$ with $a\ge b$. 
	Assume that the level of $[a,b]$ is $L$. Then  the following statements hold:
	\begin{itemize}
	\item[(i)] $N_k=m-2+\delta_{1,k}$ for $1\leq k \leq L-1$, and hence 
	\[ (N_1, N_2, \dots, N_L) = (m-1, m-2, m-2, \dots , m-2, c) \] for $1 \le c \le m-2+\delta_{1,L}$, where $\delta_{i,j}$ is the Kronecker's delta.
	 
	\item[(ii)] $\bc_k$ is of type $+$ for $1\leq k \leq L-1$,\item[(iii)] $N_{L}+\rho_{L} \leq m-2+\delta_{1,L}+\frac{1}{2}$.
	\end{itemize}
	\end{prop}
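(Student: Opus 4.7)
The plan is to prove all three statements simultaneously by induction on $L$, exploiting the key observation that, when one assumes the claimed values $N_1 = m-1$ and $N_k = m-2$ for $2 \le k \le L-1$, the recursive formula for $r_k := N_k + \rho_k$ becomes a linear-fractional map that mirrors the defining recursion $\gamma_n = m - 1/\gamma_{n-1}$. Concretely, the type-$+$ recursion of Definition \ref{def:a_n} gives $r_{k+1} = \rho_k/(1-\rho_k) = (r_k - N_k)/((N_k+1) - r_k)$, and a direct computation yields the clean identity
\[ 1 + r_{k+1} = \frac{1}{(N_k+1) - r_k}. \]
Since $N_1 + 1 = m$ while $N_k + 1 = m-1$ for $k \ge 2$, and since $m - \gamma_n = 1/\gamma_{n-1}$, this map takes each $\gamma_n$-indexed interval to the next one, shifted down by $1$.

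For the base case $L = 1$, statements (i) and (ii) are vacuous, and (iii) reads $a/b \le \gamma_1 = m - 2 + \delta_{1,1} + 1/2$, which is the definition of level. For $L \ge 2$, I would establish by induction on $k$ for $1 \le k \le L$ the combined package:
\[ r_1 \in (\gamma_{L-1}, \gamma_L], \qquad r_k \in (\gamma_{L-k} - 1,\, \gamma_{L-k+1} - 1] \text{ for } 2 \le k \le L, \]
together with $N_1 = m-1$, $N_k = m-2$ for $2 \le k \le L-1$, and $\bc_k$ of type $+$ for $1 \le k \le L-1$. The base $k = 1$ is the definition of level; since $L \ge 2$ gives $\gamma_{L-1} \ge \gamma_1 = m - 1/2 > m - 1$ while $\gamma_L < \gamma < m$ by Lemma \ref{lem:range}, one reads off $N_1 = m-1$ and $\rho_1 > 1/2$, so $\bc_1$ is of type $+$ by Lemma \ref{lem:first}(4).

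For the inductive step from $k$ to $k+1$ with $k \le L-1$, I apply the boxed identity at the two endpoints of the interval for $r_k$: the identity $m - \gamma_n = 1/\gamma_{n-1}$, equivalently $(m-1) - (\gamma_n - 1) = 1/\gamma_{n-1}$, shows that the endpoints map exactly to those of the predicted interval for $r_{k+1}$, and monotonicity of $r \mapsto 1/((N_k+1) - r) - 1$ on the relevant region handles the interior. When $k+1 \le L-1$, the resulting range $(\gamma_{L-k-1} - 1,\, \gamma_{L-k} - 1]$ is contained in $(m - 3/2,\, m - 1)$, which pins down $N_{k+1} = m-2$ and $\rho_{k+1} > 1/2$ (hence $\bc_{k+1}$ of type $+$), closing the induction. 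At $k = L$, the interval becomes $(\gamma_0 - 1,\, \gamma_1 - 1] = (-1,\, m-3/2]$, so combined with the positivity $N_L \ge 1$ built into Definition \ref{def:a_n} one obtains $1 \le N_L \le m-2$ and $N_L + \rho_L \le m - 3/2 = m - 2 + \delta_{1,L} + 1/2$, proving (i) and (iii).

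The main delicacy will be tracking strict-versus-non-strict endpoints of the half-open intervals through the linear-fractional map and verifying the crucial inequality $\gamma_{L-k-1} - 1 > m - 2$ precisely for $L - k - 1 \ge 1$, which is what makes the induction run up to $k = L-1$ and stop there. Once these boundary identities are in place, the remainder is routine monotonicity and interval chasing.
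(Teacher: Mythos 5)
Your proposal is correct and is essentially the paper's own argument: the paper likewise shows $N_k+\rho_k \in (\gamma_{L-k}-1,\gamma_{L-k+1}-1]$ by pushing the level inequality $\gamma_{L-1}<a/b\le\gamma_L$ through the type-$+$ recursion $\rho_{k-1}\mapsto \rho_{k-1}/(1-\rho_{k-1})$ and the identity $\gamma_n=m-1/\gamma_{n-1}$, reading off $N_1=m-1$, $N_k=m-2$, $\rho_k>\tfrac12$ along the way; you have merely packaged the ``repeating this argument'' step as an explicit induction via $1+r_{k+1}=1/((N_k+1)-r_k)$. The only nit is that $m-\gamma_n=1/\gamma_{n-1}$ fails at $n=1$ (since $\gamma_0=0$), so at the final step the image interval is actually $(1,\gamma_1-1]$ rather than $(\gamma_0-1,\gamma_1-1]$ --- which only strengthens the containment you need for (iii).
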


	\begin{proof}
	Suppose the level of $[a,b]$ is $1$. Then we obtain $N_1+\rho_1=\frac{a}{b}\leq \gamma_1=m-\frac{1}{2}$. 
	Suppose the level of $[a,b]$ is $2$. Then $(m-1)+\frac{1}{2}< N_1+\rho_1\leq \gamma_2= (m-1)+\frac{2m-3}{2m-1}$.
	Thus we obtain $\frac{1}{2} < \rho_1 \leq \frac{2m-3}{2m-1}$. It implies that $1< N_2+\rho_2=\frac{\rho_1}{1-\rho_1} \leq  (m-2)+\frac{1}{2}$.
	Hence, $N_1=m-1$, $\bc_1$ is of type $+$ and $N_L+\rho_L \leq m-2+\frac{1}{2}$.

	Now suppose the level of $[a,b]$ is $L\geq 3$.
	 Then $$(m-1)+1-\frac{1}{\gamma_{L-2}}=\gamma_{L-1} <\frac{a}{b} \leq \gamma_L=(m-1)+1-\frac{1}{\gamma_{L-1}}.$$   Thus we obtain $1-\frac{1}{\gamma_{L-2}} <\rho_1 \leq 1-\frac{1}{\gamma_{L-1}}$.
	It implies that $\gamma_{L-2}-1<N_2+\rho_2=\frac{\rho_1}{1-\rho_1}\leq \gamma_{L-1}-1$. Since $m-2<\gamma_{L-2}-1$, we have $N_2=m-2$ and $\gamma_{L-2}-m+2<\rho_2 \leq \gamma_{L-1}-1-m+2$.

	By the recursive relation of $\{\gamma_n\}$, we obtain
	\begin{equation*}
	\gamma_{L-3}-1<N_3+\rho_3=\frac{\rho_2}{1-\rho_2}\leq \gamma_{L-2}-1.
	\end{equation*}
	Repeating this argument yields 
	\begin{equation*}
	m-2+\frac{1}{2}=\gamma_{1}-1<N_{L-1}+\rho_{L-1}=\frac{\rho_{L-2}}{1-\rho_{L-2}}\leq \gamma_{2}-1.
	\end{equation*}
	Hence, $1<N_{L}+\rho_{L}=\frac{\rho_{L-1}}{1-\rho_{L-1}} \leq \gamma_1-1=m-2+\frac{1}{2}$.
	It implies that $N_k=m-2+\delta_{1,k}$ for $1\leq k \leq L-1$.
	Moreover, $\bc_k$ is of type $+$ for $1\leq k \leq L-1$ because $\rho_k>\frac{1}{2}$.
	Since $N_L+\rho_L \leq m-2+\frac{1}{2}$, we obtain the desired result.
	\end{proof}

	\begin{cor} \label{cor-cannot}
        Assume that $L$ is the level of $[a,b]$.
        \begin{enumerate}
            \item If $N_L=m-2+\delta_{1,L}$ then $\bc_L$ cannot be of type $+$.
	\item If $L \ge 2$ and $N_L=1$ then $\bc_L$ cannot be of type $0$.
	\end{enumerate}
	\end{cor}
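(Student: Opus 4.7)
The plan is to derive both statements as immediate consequences of the inequalities already established in Proposition~\ref{prop-Nk}, combined with the type criteria in Lemma~\ref{lem:first}(4) and Lemma~\ref{lem:a_{n,i}}(4). In essence, Proposition~\ref{prop-Nk}(iii) pins the value $N_L+\rho_L$ into an interval narrow enough to force $\rho_L\leq \tfrac{1}{2}$ when $N_L$ is as large as it can be, while the recursion for $\rho_L$ together with Proposition~\ref{prop-Nk}(ii) forces $\rho_L>0$ when $L\geq 2$.

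For part (1), I would start from the bound $N_L+\rho_L\leq m-2+\delta_{1,L}+\tfrac{1}{2}$ supplied by Proposition~\ref{prop-Nk}(iii). Under the hypothesis $N_L=m-2+\delta_{1,L}$, this immediately yields $\rho_L\leq \tfrac{1}{2}$. For $L=1$, Lemma~\ref{lem:first}(4) states that $\bc_1$ is of type $+$ only when $\rho_1>\tfrac{1}{2}$; for $L\geq 2$, the analogous statement in Lemma~\ref{lem:a_{n,i}}(4) applies. Either way, $\bc_L$ cannot be of type $+$.

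For part (2), assume $L\geq 2$. Proposition~\ref{prop-Nk}(ii) gives that $\bc_{L-1}$ is of type $+$, which by Lemma~\ref{lem:a_{n,i}}(4) (or Lemma~\ref{lem:first}(4) when $L-1=1$) means $\rho_{L-1}>\tfrac{1}{2}$. Applying the recursion from Definition~\ref{def:a_n}(2),
\[
N_L+\rho_L \;=\; \frac{\rho_{L-1}}{1-\rho_{L-1}} \;>\; \frac{1/2}{1/2} \;=\; 1.
\]
Hence under $N_L=1$, we must have $\rho_L>0$. By Lemma~\ref{lem:a_{n,i}}(4), type $0$ requires $\rho_L=0$, so $\bc_L$ is not of type $0$.

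I do not anticipate any substantive obstacle, as this corollary is essentially a bookkeeping consequence of Proposition~\ref{prop-Nk}. The only care needed is the separate treatment of $L=1$ in part (1) (so that the correct one among Lemma~\ref{lem:first}(4) and Lemma~\ref{lem:a_{n,i}}(4) is invoked) and ensuring in part (2) that the hypothesis $L\geq 2$ is what provides the strict inequality $\rho_{L-1}>\tfrac{1}{2}$ driving $\rho_L>0$.
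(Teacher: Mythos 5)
Your proof is correct and follows essentially the same route as the paper: part (1) is read off from Proposition \ref{prop-Nk}(iii) together with the type criteria, and part (2) combines Proposition \ref{prop-Nk}(ii), the criterion $\rho_{L-1}>\tfrac12$, and the recursion for $N_L+\rho_L$ (the paper phrases this as a contradiction from $\rho_L=0$ forcing $\rho_{L-1}=\tfrac12$, which is the same computation run in reverse). No gaps.
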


	\begin{proof}
	Part (1) is an immediate consequence of Proposition \ref{prop-Nk} (iii) and Lemma \ref{lem:a_{n,i}} (4). For part (2), assume that $N_L=1$ and $\bc_L$ of type $0$. Then $\rho_L=0$ by Lemma \ref{lem:a_{n,i}} (4) and $\rho_{L-1}=\frac 1 2$ by the definition of $\rho_n$. However, $\rho_{L-1}> \frac 1 2$ by Proposition \ref{prop-Nk} (ii) and Lemma \ref{lem:a_{n,i}} (4), which is a contradiction.
	\end{proof}

	\subsection{Reduction according to canonical sequences}
	In this subsection we show that $s^{a \times b}$ can be written through the canonical sequences $\bc_k$. This result will be used in the next section and has its own interest.  
	In what follows, we write only the subscripts of simple reflections when we express elements in $W(m)$.  For example, we write $23=s_2s_3$ and $21=s_2s_1$.

	\begin{prop}\label{lem:notreduced}
	Let $[a,b]$ be an imaginary positive reduced root of $\mathcal H(m)$ with $a \ge b$, and $\bc_k=(a_{k,1}, a_{k,2}, \dots , a_{k, d_k})$ its canonical sequences. Suppose the level of $[a,b]$ is $L$. According to the values of $k$, define $H_k, V_k \in W(m)$ by
	\begin{center}
	\begin{tabular}{|c||c|c|}
	\hline
	$k$          & $H_k$                       & $V_k$  \\ \hline \hline
	$1$  & $23$    &   $21$            \\ \hline
	$2$  & $21$    &   $31$            \\ \hline
	$2l+1\ (l\geq1)$  & $1(321)^{l-1}23(123)^{l-1}1$     & $1(321)^{l-1}2123(123)^{l-1}1$               \\ \hline
	$2l+2\ (l\geq1)$  & $13(213)^{l-1}12(312)^{l-1}31$     & $13(213)^{l-1}2312(312)^{l-1}31$               \\ \hline
	\end{tabular}
	\end{center}
	Then, for $1\leq k\leq L$, we have
	\begin{equation} \label{sabH}
	s^{a\times b}= H_k^{a_{{ k},1}}V_k H_k^{a_{{ k},2}}V_k \cdots H_k^{a_{{{k}, d_k}}} V_k.
	\end{equation}
	\end{prop}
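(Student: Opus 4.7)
The plan is to induct on $k$. The base case $k=1$ follows directly from the definitions: the factorization $\mathcal{D}^{a\times b} = h^{a_{1,1}} v \cdots h^{a_{1,d_1}} v$ from \eqref{eqn-sab} translates, via $h \mapsto H_1 = s_2 s_3$ and $v \mapsto V_1 = s_2 s_1$, into the claimed level-$1$ expression.

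For the inductive step I fix $k$ with $k+1 \le L$ and assume the formula at level $k$. Proposition~\ref{prop-Nk}(i)(ii) forces $\bc_k$ to be of type $+$ with $N_k = m-2+\delta_{1,k}$, while Lemma~\ref{lem:first}(3) / Lemma~\ref{lem:a_{n,i}}(3) guarantee that $\bc_k$ begins with $N_k{+}1$, ends with $N_k$, and contains no two consecutive $N_k$'s. Hence $\bc_k$ factors uniquely into runs of $(N_k{+}1)$'s separated by single $N_k$'s, with the run lengths equal to the entries $a_{k+1,i}$ of $\bc_{k+1}$ by Definition~\ref{def:a_n}. Substituting this factorization into the level-$k$ expression reduces the whole proposition to verifying the two identities in $W(m)$
\[
H_k^{N_k+1}V_k = H_{k+1}, \qquad H_k^{N_k}V_k = V_{k+1} \qquad (\ast)
\]
for $1 \le k \le L-1$: once $(\ast)$ is known, the substituted expression becomes $H_{k+1}^{a_{k+1,1}} V_{k+1} \cdots H_{k+1}^{a_{k+1,d_{k+1}}} V_{k+1}$, which is precisely the level-$(k+1)$ formula.

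The identities $(\ast)$ I verify by direct computation in $W(m)$. For $k=1$ they collapse immediately: $H_1^m V_1 = (s_2s_3)^m(s_2s_1) = s_2s_1 = H_2$ and $H_1^{m-1}V_1 = (s_3s_2)(s_2s_1) = s_3s_1 = V_2$, using $(s_2s_3)^m = e$ and $s_2^2=e$. For $k \ge 2$ the key observation is that in both parities
\[
H_{2l+1} = 1\,(321)^{l-1}(23)(123)^{l-1}\,1, \qquad H_{2l+2} = 13\,(213)^{l-1}(12)(312)^{l-1}\,31,
\]
the outer ``wrappers'' multiply to the identity in $W(m)$ (for instance $(123)^{l-1}(321)^{l-1} = e$ simply by $s_i^2 = e$), which yields $H_{2l+1}^n = 1\,(321)^{l-1}(23)^n(123)^{l-1}\,1$ and $H_{2l+2}^n = 13\,(213)^{l-1}(12)^n(312)^{l-1}\,31$. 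Since $V_k$ shares the same outer wrappers as $H_k$, the same cancellation reduces $H_k^n V_k$ to the outer wrappers with a short middle factor; applying $(23)^{m-1}=32$ and $(12)^{m-1}=21$ together with $s_i^2=e$ collapses this middle, and a few elementary word-level identities such as $1\cdot(321)^l = 13\cdot(213)^{l-1}\cdot 21$ and $(123)^l \cdot 1 = 12\cdot(312)^{l-1}\cdot 31$ identify the outcome with the defining formulas for $H_{k+1}$ and $V_{k+1}$. The main obstacle is the bookkeeping in this final step: the odd-to-even and even-to-odd transitions must be handled separately, and one has to track the string manipulations carefully enough to confirm that the collapsed middle matches $H_{k+1}$ or $V_{k+1}$ on the nose. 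Beyond the inductive setup, nothing deeper than the defining Coxeter relations of $W(m)$ and elementary rewriting of words in the generators is needed.
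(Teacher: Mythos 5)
Your proposal is correct and follows essentially the same route as the paper: induction on $k$ with the base case $k=1$ read off from \eqref{eqn-sab}, the run-decomposition of $\bc_k$ (type $+$, starting with $N_k+1$ and ending with $N_k$, with run lengths given by $\bc_{k+1}$) supplied by Proposition \ref{prop-Nk} and Lemmas \ref{lem:first} and \ref{lem:a_{n,i}}, and the whole step reduced to the two identities $H_k^{N_k+1}V_k=H_{k+1}$ and $H_k^{N_k}V_k=V_{k+1}$, which the paper likewise verifies by direct word manipulation (treating the parities separately). The only difference is organizational: the paper works out $k=2,3,4$ explicitly before stating the general step, whereas you package the computation via the wrapper-cancellation observation.
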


\begin{example}
Continuing Example \ref{ex:[5,3]} (4), suppose $m=5$ and $[a,b]=[62,13]$. Since $\gamma_1=\frac 9 2 < \frac {62}{13} < \gamma_2 =\frac {43} 9$, the level of $[62,13]$ is $2$. Since $\bc_2=(4,3,3)$, we obtain \[ s^{62 \times 13}=(21)^4(31)(21)^3(31)(21)^3(31) \in W(5). \]
\end{example}

    \begin{proof}[Proof of Proposition \ref{lem:notreduced}]
	If $k=1$, then \eqref{sabH} follows from the definition of $s^{a \times b}$ and \eqref{eqn-sab}. Suppose $k=2$. By Proposition \ref{prop-Nk}, $N_1=m-1$ and $\bc_1$ is of type $+$. Then, by  Lemma \ref{lem:first} and the definition of $\bc_2$, we have
	\[ \bc_1=(a_{1,1}, a_{1,2}, \dots , a_{1,d_1}) =(m^{a_{2,1}}, m-1, m^{a_{2,2}}, m-1, \dots , m^{a_{2,d_2}}, m-1) ,\] where we write
	$m^s= \underbrace{m,m,\ldots,m}_{s\text{ times}}$. Since $(23)^m=e$ and $(23)^{m-1}21=31$, we obtain
	\begin{equation*}
	\begin{split}
	s^{a\times b}= &H_1^{a_{{ 1},1}}V_1 H_1^{a_{{ 1},2}}V_1 \cdots H_1^{a_{{{1}, d_1}}} V_1\\=& (21)^{a_{2,1}}(31) (21)^{a_{2,2}}(31) \cdots (21)^{a_{2, d_2}}(31)
	= H_2^{a_{{ 2},1}}V_2 H_2^{a_{{ 2},2}}V_2 \cdots H_2^{a_{{{2}, d_2}}} V_2.
	\end{split}
	\end{equation*}

	Suppose that $k=3$. By Proposition \ref{prop-Nk}, $N_2=m-2$ and $\bc_2$ is of type $+$. By Lemma \ref{lem:a_{n,i}}, we have $a_{2,i}=m-2$ or $m-1$ for $1 \leq i \leq d_2$, $a_{2,1}=m-1$ and $a_{2,d_2}=m-2$. 
	Then, by the definition of $\bc_3$, we have
	\begin{align*} \bc_2&=(a_{2,1}, a_{2,2}, \dots , a_{2,d_2}) \\ &=((m-1)^{a_{3,1}}, m-2, (m-1)^{a_{3,2}}, m-2, \dots , (m-1)^{a_{3,d_3}}, m-2) ,\end{align*} where we write
	$(m-1)^s= \underbrace{m-1,m-1,\ldots,m-1}_{s\text{ times}}$. 
	Since $(21)^{m-1}(31)=1231$ and $(21)^{m-2}(31)=121231$, we have
	\begin{equation*}
	\begin{split}
	s^{a\times b}=& H_2^{a_{{ 2},1}}V_2 H_2^{a_{{ 2},2}}V_2 \cdots H_2^{a_{{{2}, d_2}}} V_2\\=&(1231)^{a_{3,1}}(121231) (1231)^{a_{3,2}}(121231) \cdots (1231)^{a_{3, d_3}}(121231)\\
	=& H_3^{a_{{ 3},1}}V_3 H_3^{a_{{ 3},2}}V_3 \cdots H_3^{a_{{{3}, d_3}}} V_3.
	\end{split}
	\end{equation*}

	The proof for $k=4$ is similar to the case $k=3$ with 
	\begin{equation*}
	\begin{split}
	(1231)^{m-1}(121231)&=1(3123)1=13(12)31,\\
	(1231)^{m-2}(121231)&=1(323123)1=13(2312)31.
	\end{split}
	\end{equation*}

Now let us use induction on $k$. Suppose \eqref{sabH} is true for $k=2l+2$ and consider $k+1=2l+3 \le L$. Then $N_{k}=m-2$ and $\bc_{k}$ is of type $+$.  By Lemma \ref{lem:a_{n,i}} and the definition of $\bc_{k+1}$, we have $a_{k,1}=m-1$, $a_{k,d_k}=m-2$ and
\begin{align} \label{bckm-1}
 \bc_{k}&=(a_{k,1}, a_{k,2}, \dots , a_{k,d_k}) \\ &=((m-1)^{a_{k+1,1}}, m-2, (m-1)^{a_{k+1,2}}, m-2, \dots , (m-1)^{a_{k+1,d_{k+1}}}, m-2) . \nonumber
\end{align}
Since we have 
\begin{align*}
H_k^{m-1}V_k &= 13(213)^{l-1} (12)^{m-1} 2312 (312)^{l-1}31 = 1 (321)^l 23 (123)^l 1=H_{k+1},\\
H_k^{m-2}V_k & = 13(213)^{l-1} (12)^{m-2} 2312 (312)^{l-1}31 = 1 (321)^l 2123 (123)^l 1=V_{k+1},
\end{align*}
it follows from \eqref{bckm-1} and the induction hypothesis that 
\begin{equation*}
\begin{split}
s^{a\times b}=& H_{k}^{a_{{ k},1}}V_k H_k^{a_{{ k},2}}V_k \cdots H_k^{a_{{{k}, d_k}}} V_k\\=& H_{k+1}^{a_{{ k+1},1}}V_{k+1} H_{k+1}^{a_{{ k+1},2}}V_{k+1} \cdots H_{k+1}^{a_{{{k+1}, d_{k+1}}}} V_{k+1}.
\end{split}
\end{equation*}

The proof for the next step (i.e. $k=2l+3$) is similar, and we omit the details.
\end{proof}

The following corollary will play an important role in the next section and has interest in its own right.
\begin{cor} \label{cor-red}
    Let $[a,b]$ ($a \ge b$) be an imaginary positive reduced root of $\mathcal H(m)$ with level $L \ge 2$, and $\bc_k=(a_{k,1}, a_{k,2}, \dots , a_{k, d_k})$ its canonical sequences for $2\leq k\leq L$.

(1) If $k=2l+2$, then
    \begin{equation} \label{sabeven} s^{a \times b} =(13 2)^{l}(21)^{a_{k,1}}(31) (21)^{a_{k,2}}(31) \cdots (21)^{a_{k,d_k}}(31)(231)^{l} .\end{equation}

(2) If $k=2l+1$, then
    \begin{equation} \label{sabodd} s^{a \times b} =(1 32)^{l}(23)^{a_{k,1}+1}(21) (23)^{a_{k,2}+1}(21) \cdots (23)^{a_{k,d_k}+1}(21)(231)^{l} .\end{equation}
Moreover, we have
\[ (231)^l s^{a \times b} (132)^l =  s^{\tilde a \times \tilde b} \] for $[\tilde a, \tilde b]$ whose first canonical sequence is $(a_{k,1}+1, \dots, a_{k,d_k}+1)$ with level $L-k+1$. 
        \end{cor}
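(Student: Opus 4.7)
The proof splits into (a) establishing formulas (1) and (2), and (b) the \emph{Moreover} statement. The strategy for (a) is to rewrite each factor $H_k$ and $V_k$ appearing in Proposition \ref{lem:notreduced} in the compact form $(132)^l \cdot W \cdot (231)^l$ for some short word $W$, and then exploit the cancellation $(231)(132)=e$ in $W(m)$ (an immediate consequence of $s_i^2=e$) to collapse adjacent factors when assembling $s^{a\times b}$.

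The starting point is a handful of word-level identities: $1(321)^{l-1} = (132)^{l-1} 1$ and $(123)^{l-1} 1 = 1(231)^{l-1}$, together with $13(213)^{l-1} = (132)^{l-1} 13$ and $(312)^{l-1} 31 = 31(231)^{l-1}$, all verified by direct concatenation. Combined with $(231)(132)=e$ and careful use of $s_i^2=e$, these yield
\begin{align*}
\text{for } k=2l+1:\quad H_k &= (132)^l \cdot 23 \cdot (231)^l, & V_k &= (132)^l \cdot 2321 \cdot (231)^l,\\
\text{for } k=2l+2:\quad H_k &= (132)^l \cdot 21 \cdot (231)^l, & V_k &= (132)^l \cdot 31 \cdot (231)^l.
\end{align*}
Substituting into Proposition \ref{lem:notreduced} and repeatedly applying $(231)^l(132)^l=e$ collapses the product $H_k^{a_{k,1}}V_k \cdots H_k^{a_{k,d_k}}V_k$ to $(132)^l \cdot X \cdot (231)^l$, where $X$ equals $(21)^{a_{k,1}}(31)\cdots(21)^{a_{k,d_k}}(31)$ for even $k$, and $(23)^{a_{k,1}+1}(21)\cdots(23)^{a_{k,d_k}+1}(21)$ for odd $k$ (using the word identity $(23)^n \cdot 2321 = (23)^{n+1}(21)$). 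This proves (1) and (2).

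For the \emph{Moreover} claim in case (2), conjugating formula (2) by $(132)^l$ on the right and $(231)^l$ on the left strips off the outer factors and produces exactly $X = (23)^{a_{k,1}+1}(21)\cdots(23)^{a_{k,d_k}+1}(21)$. By the $k=1$ case of Proposition \ref{lem:notreduced} (where $H_1=23$ and $V_1=21$), this equals $s^{\tilde a \times \tilde b}$ provided $[\tilde a, \tilde b]$ has first canonical sequence $(a_{k,1}+1,\dots,a_{k,d_k}+1)$. I would define $\tilde b := d_k$ and $\tilde a := d_k + \sum_i a_{k,i}$, then verify: (i) $\gcd(\tilde a, \tilde b)=1$, using the identity $\sum_i a_{k,i} = d_{k-1}-d_k$ (which drops out of the proof of Lemma \ref{lem:a_{n,i}}(1)) together with $\gcd(d_{k-1},d_k)=1$ (from $d_k = d_{k-1}-e_{k-1}$ with $\gcd(d_{k-1},e_{k-1})=1$); (ii) $[\tilde a,\tilde b]$ is a reduced positive root of $\mathcal H(m)$; (iii) its level equals $L-k+1$.

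Claim (iii) is the chief obstacle. The key observation is that entrywise shifting $\bc_k$ by $+1$ preserves its type (the isolation pattern of entries is unaffected), so the canonical sequences of $[\tilde a,\tilde b]$ satisfy $\tilde\bc_{j+1}=\bc_{k+j}$ and $\tilde N_{j+1}=N_{k+j}$ for $j\ge 1$, while $\tilde N_1 = N_k+1$. When $k<L$, Proposition \ref{prop-Nk} gives $N_k = m-2$, hence $\tilde N_1 = m-1$, and the pattern $\tilde N_j = m-2+\delta_{1,j}$ persists exactly for $1 \le j \le L-k$ with the first deviation at $j=L-k+1$; Proposition \ref{prop-Nk}, applied in reverse to $[\tilde a,\tilde b]$, then forces $\tilde L = L-k+1$. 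The edge case $k=L$ is handled directly: here one must show $\tilde L = 1$, which reduces to the bound $\tilde a/\tilde b = 1/(1-\rho_{L-1}) \le \gamma_1 = m - 1/2$, and this in turn follows from the upper bound on $N_L+\rho_L$ given in Proposition \ref{prop-Nk}(iii).
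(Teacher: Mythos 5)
Your proposal is correct and follows essentially the same route as the paper: parts (1) and (2) are obtained by direct word manipulation starting from Proposition \ref{lem:notreduced}, with the paper telescoping the product $H_k^{a_{k,1}}V_k\cdots H_k^{a_{k,d_k}}V_k$ in place while you first conjugate each factor into the form $(132)^l W (231)^l$ and then cancel $(231)^l(132)^l=e$ --- the same computation in different packaging --- and the paper simply declares the final assertion ``clear from the definitions'' while you spell it out (correctly identifying $[\tilde a,\tilde b]=[d_{k-1},d_k]$ and tracking how the canonical data shift). The one loose end is your item (ii), which you list but never argue, and which is genuinely needed since your proof of (iii) applies Proposition \ref{prop-Nk} and Corollary \ref{cor-cannot} to $[\tilde a,\tilde b]$ (and the claim would in fact fail for $k>L$, e.g.\ $[59,23]$ with $m=3$ gives $[d_2,d_3]=[10,3]$, which is not a root); it can be closed in one line from Lemma \ref{lem:range} together with Proposition \ref{prop-Nk}, which give $1<N_k+\rho_k<\gamma-1$ for $2\le k\le L$, hence $2<\tilde a/\tilde b=1+N_k+\rho_k<\gamma$ and $[\tilde a,\tilde b]$ is an imaginary (reduced, by your item (i)) positive root.
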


\begin{remark}
The part (2) is related to Lemma 3.3 (2) of \cite{LL1}, which connects the Weyl group action on the set of roots of $\mathcal H(m)$ with the set of rigid reflections. Though the above corollary is not enough to prove injectivity, one may find that the expressions look more natural than those obtained in the next section after the reduction is completed.
\end{remark}

        \begin{proof}
            Assume that $k=2l+2$. If $k=2$ then \eqref{sabeven} is nothing but \eqref{sabH}. If $k \ge 4$, we obtain from \eqref{sabH}
                \begin{align*} s^{a \times b}&=13(213)^{l-1}      (12)^{a_{k,1}} (2312) (12)^{a_{k,2}} (2312) \cdots (12)^{a_{k,d_k} }(2312)(312)^{l-1}31\\& = 13(213)^{l-1}   (21) \  (12)(12)^{a_{k,1}} (23) \ (12)(12)^{a_{k,2}}(23)\ (12) (12)^{a_{k,3}}\\ & \phantom{LLLLLLLLLLLLLLLLLLLLLLLLLLLLL} \cdots (23)\ (12)(12)^{a_{k,d_k}} (2312)(312)^{l-1}31 \\&=  (132)^{l}    1  \ (12)^{a_{k,1}}(13) \ (12)^{a_{k,2}}(13) \cdots (12)^{a_{k,d_k}}(13) 1(231)^{l} \\&=  
        (13 2)^{l}(21)^{a_{k,1}}(31) (21)^{a_{k,2}}(31) \cdots (21)^{a_{k,d_k}}(31)(231)^{l}.
        \end{align*}
        The case $k=2l+1$ is similar and we omit the details. The last assertion is clear from the definitions.
        \end{proof}

        Though Proposition \ref{lem:notreduced} and Corollary \ref{cor-red} provide reductions of the initial expression of $s^{a \times b}$, it is not sufficient to prove injectivity of the map $[a,b] \mapsto s^{a \times b}$. In Section \ref{reductions}, we will further reduce $s^{a \times b}$ to its standard word to show injectivity.
        
\subsection{Dichotomy between the cases $a>b$ and $a<b$} In this subsection, we show that $s([a,b])$ with $a<b$ cannot be equal to any of $s([a_1, b_1])$ with $a_1 > b_1$ in $W(m)$. Consequently, since the roles of $1$ and $3$ can be interchanged, it will be enough to establish Theorem \ref{thm-main} for a proof of Theorem \ref{thm-in}.

First we recall a result from \cite{LL1}. Let $\sigma_1$ and $\sigma_2$ be the simple reflections of $\mathcal H(m)$ associated with the simple roots $[1,0]$ and $[0,1]$, respectively. Then they act on $[a,b] \in \mathbb Z^2$ in the usual way by
\[ \sigma_1[a,b]=[-a+mb, b] \qquad \text{and} \qquad \sigma_2[a,b]=[a,-b+ma]. \]
\begin{lem} \label{lem-ll1} \cite[Lemma 3.3 (3)]{LL1}
Assume that $[a,b] \in \mathcal P^+$ with $a \ge b$, and write $[c,d]=\sigma_1 \sigma_2 [a,b]$. Then we have \[ s_3s_2s_1 s([a,b]) s_1s_2s_3 = s([c,d]). \]
\end{lem}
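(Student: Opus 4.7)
The plan is to reduce the conjugation identity to one purely about Dyck-path words, and then verify it by comparing the first canonical sequences on the two sides.

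First I would note that, with $[c,d]=\sigma_1\sigma_2[a,b]=[(m^2-1)a-mb,\,ma-b]$, a direct computation using $m\ge 2$ and $a\ge b\ge 1$ gives $c\ge d\ge a\ge b$ and $\gcd(c,d)=\gcd(a,b)=1$, so $[c,d]\in\mathcal P^+$ with $c\ge d$. Then Lemma \ref{sFnFn1}(1) applies to both sides of the claimed identity, giving $s([a,b])=s_3s_2\,s^{a\times b}\,s_1$ and $s([c,d])=s_3s_2\,s^{c\times d}\,s_1$. After substituting these and cancelling $s_3 s_2$ on the left and $s_1$ on the right, the problem reduces to the equivalent Dyck-path identity
\[ s^{c\times d} \;=\; s_1 s_3 s_2 \, s^{a\times b} \, s_2 s_3 s_1 \qquad \text{in } W(m). \]

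The second step is to verify this identity by unpacking both sides via their first canonical sequences. From $c=md-a$ (which follows at once from $d=ma-b$) together with Lemma \ref{lem:1stseq}, each entry of $\bc_1([c,d])$ equals $m-(\lfloor ai/d\rfloor-\lfloor a(i-1)/d\rfloor)$ and hence lies in $\{m-1,\,m\}$. Via Proposition \ref{lem:notreduced} (for $k=1$), $s^{c\times d}$ is then a concrete alternating product of the two factor-types $H_1^{m-1}V_1=(s_2s_3)^{m-1}(s_2s_1)$ and $H_1^{m}V_1=s_2s_1$ (the latter using $(s_2s_3)^m=e$), with block pattern dictated by the floor-difference sequence of $a/d$. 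On the right-hand side, I would expand $s^{a\times b}=\prod_{j=1}^{b}H_1^{a_{1,j}}V_1$, and use the braid relations $s_1s_2s_1=s_2s_1s_2$, $s_2s_3s_2=s_3s_2s_3$ and $(s_2s_3)^m=e$ to conjugate each factor by $s_1s_3s_2$; these conjugations should telescope into a product of the same two factor-types $H_1^{m-1}V_1$ and $H_1^m V_1$ in the multiplicities prescribed by $\bc_1([c,d])$.

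The main obstacle is the matching step: one has to establish the number-theoretic identity comparing the sequence $(a_{1,j})_{j=1}^{b}$ of $\bc_1([a,b])$ with the floor-difference sequence for $a/d$, and then check that the Coxeter-relation computations produce words in the right multiplicities at each junction. A small sanity check with $m=3$, $[a,b]=[2,1]$, $[c,d]=[13,5]$ (both sides collapse to $s_1s_2s_3s_2s_1s_2s_3s_1$ in $W(3)$) is encouraging. Geometrically, the identity reflects the fact that $\sigma_1\sigma_2\in SL_2(\mathbb Z)$ acts on the universal cover $\mathbb R^2$ of the torus $\Sigma$ and permutes the three labelled curves in a manner encoded precisely by the conjugation by $s_3 s_2 s_1$; however, converting this geometric picture into a rigorous proof essentially reduces to the same word-level verification.
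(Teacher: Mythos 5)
The first thing to say is that the paper itself contains no proof of this statement: it is imported verbatim as \cite[Lemma 3.3 (3)]{LL1}, so your attempt can only be judged on its own terms. Your preliminary reductions are correct: $[c,d]=[(m^2-1)a-mb,\,ma-b]$ does lie in $\mathcal P^+$ with $c\ge d\ge a\ge b$; Lemma \ref{sFnFn1}(1) correctly reduces the claim to the identity $s^{c\times d}=s_1s_3s_2\,s^{a\times b}\,s_2s_3s_1$; and since $c=md-a$ with $d\ge a$, every entry of $\bc_1([c,d])$ is $m-\bigl(\lfloor ai/d\rfloor-\lfloor a(i-1)/d\rfloor\bigr)\in\{m-1,m\}$, so $s^{c\times d}$ is indeed a word in the two blocks $s_2s_1$ and $s_3s_1$. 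The sanity check for $m=3$, $[2,1]\mapsto[13,5]$ also checks out (both sides of the reduced identity equal $s_1s_2s_3s_2s_1s_2s_3s_1=s^{13\times 5}$).

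However, the argument stops exactly where the lemma begins. The claim that the conjugations ``telescope into a product of the same two factor-types in the multiplicities prescribed by $\bc_1([c,d])$'' is the entire content of the statement, and it is asserted rather than proved; you flag it yourself as ``the main obstacle.'' Note moreover that the naive factor-by-factor version fails outright: $s_1s_3s_2(s_2s_3)s_2s_3s_1=s_1s_2s_3s_1$ and $s_1s_3s_2(s_2s_1)s_2s_3s_1=s_1s_3s_1s_2s_3s_1$, and since $m_{13}=\infty$ neither simplifies to a block of either type. What is actually required is a left-to-right absorption argument in which the prefix $s_1s_3s_2$ is pushed through the word $\prod_j(s_2s_3)^{a_{1,j}}(s_2s_1)$, changing state at each junction, together with the number-theoretic matching of the resulting run-lengths against $\lfloor ai/d\rfloor-\lfloor a(i-1)/d\rfloor$. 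That computation is precisely the kind carried out in Proposition \ref{lem:notreduced} and Corollary \ref{cor-red}(2) of this paper (conjugation by $(s_1s_3s_2)^{\pm 1}$ shifts the level by $2$, which is exactly what $\sigma_1\sigma_2$ does to $a/b$), and filling your gap amounts to running that machinery in reverse and identifying $[\tilde a,\tilde b]$ with $\sigma_2\sigma_1[a,b]$. As written, your proposal is a correct and well-motivated plan with a verified reduction, but not a proof.
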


Next we need to compute explicitly $(s_2s_3)^n s_2s_1$ for $n \ge 0$. Let \[ x=2 \cos(\pi/m).\] Then the matrices of $s_i$ with respect to $\{\alpha_1, \alpha_2, \alpha_3\}$ are given by
\begin{equation} \label{eqn-mms} s_1= \begin{bmatrix} -1 & x & 2 \\ 0&1&0 \\ 0&0&1 \end{bmatrix}, \qquad s_2= \begin{bmatrix} 1&0&0 \\ x&-1&x \\ 0&0&1\end{bmatrix}, \qquad s_3= \begin{bmatrix} 1&0&0 \\ 0&1&0 \\ 2&x&-1 \end{bmatrix}. \end{equation}
Write \begin{equation} \label{eqn-s2s} (s_2s_3)^n s_2s_1= [\tau^{(n)}_{i,j}]_{1 \le i,j \le 3} \qquad \text{ for }n\ge 0.\end{equation}
Then we have
\[  \tau^{(n)}_{1,1}=-1, \qquad \tau^{(n)}_{1,2}=x, \qquad  \tau^{(n)}_{1,1}=2 
\qquad \text{ for } n\ge 0, \] and obtain the recursive relations
\[ \begin{bmatrix} \tau^{(n+1)}_{2,j} \\ \tau^{(n+1)}_{3,j} \end{bmatrix} =   A \begin{bmatrix} \tau^{(n)}_{2,j} \\ \tau^{(n)}_{3,j} \end{bmatrix} + B_j \quad (j=1,2,3),  \quad [\tau^{(0)}_{i,j}]_{\substack{i=2,3\\ j=1,2,3}} = \begin{bmatrix}-x&x^2-1&3x\\0&0&1 \end{bmatrix},  \] 
where we set 
\[ A=\begin{bmatrix} x^2-1 & -x \\ x &-1 \end{bmatrix}, \quad B_1=\begin{bmatrix} -3x\\-2 \end{bmatrix}, \quad  B_2= \begin{bmatrix} 3x^2\\2x\end{bmatrix}, \quad B_3 = \begin{bmatrix} 6x\\4 \end{bmatrix}. \]

Consider $x$ as a variable for the time being, and write $A^n = \begin{bmatrix} f_n(x) & -g_n(x) \\ g_n(x) & -f_{n-1}(x) \end{bmatrix}$ for $n \ge 1$. Then it follows from the definition that  
\begin{align} \label{eqn-gn1}  g_{n+1} &=xf_n -g_n = (x^2-1)g_n -x f_{n-1},  &  f_n&=xg_n-f_{n-1}, \\ \label{eqn-gn2} xg_{n+1}&=(x^2-1)f_{n}-f_{n-1}, & f_0&=1, \quad g_0=0, \end{align} and we obtain
\[  f_n(x)=\sum_{k=0}^n (-1)^{n-k} \binom{n+k}{n-k} x^{2k} \quad \text{ and } \quad g_n(x)=xU_{n-1}(\tfrac{x^2} 2 -1), \] 
where $U_n(x)$ are the Chebyshev polynomials of the second kind. It is well known that the roots of $U_n(x)$ are $x=\cos \left ( \frac k {n+1} \pi \right )$, $k=1,2, \dots , n$. Thus $g_n(2) >0$  and the largest roots of $g_n$ in the interval $[0,2]$ are 
\begin{equation} \label{eqn-x22}  x=2 \cos \left (\frac {\pi} {2n} \right ) \quad \text{ for } n \ge 1. \end{equation}
Similarly, $f_n(2) >0$  and the largest roots of $f_n$ in the interval $[0,2]$ are 
\begin{equation} \label{eqn-x23}  x=2 \cos \left (\frac {\pi} {2n+1} \right ) \quad \text{ for } n \ge 1. \end{equation}
Thus when $x= 2 \cos(\pi/m)$ and $n=\lceil \frac m2 \rceil$, we have
\begin{equation} \label{eqn-cos}
f_{k}(x) \ge 0 \qquad \text{ for } k=1,2, \dots , n-1.  
\end{equation}

\begin{lem} \label{lem-cheby} For $n \ge 2$, we have
\begin{align*}
\tau^{(n)}_{3,1}& = -(x^2+2)f_{n-1}-2-\sum_{k=1}^{n-2}h_k= 
-xg_n-2 - \sum_{k=1}^{n-1}  h_k,\\
\tau^{(n)}_{3,2} & = \frac {x^4+x^2+1} x f_{n-1} + \frac 1 x f_{n-2} + 2x + x \sum_{k=1}^{n-2} h_k= (x^2-1)g_n+2x+x \sum_{k=1}^{n-1}h_k,\\
\tau^{(n)}_{3,3}& = (3x^2+2)f_{n-1}+5f_{n-2}+2f_{n-3}+4+ 2 \sum_{k=1}^{n-3}h_k\\&=
3x g_n+ \frac 1 x (5g_n+7g_{n-1}+2g_{n-2}) +4 + 2   \sum_{k=1}^{n-2} h_k ,
\end{align*}
\[ \tau_{3,1}^{(1)}=-x^2-2, \qquad \tau_{3,2}^{(1)}=x^3+x, \qquad \tau_{3,3}^{(1)}= 3x^2+3, \]
where we set $h_k=3f_k+f_{k-1}= \frac 1 x (3g_{k+1}+4g_k+g_{k-1})$ and $f_{-1}=-3$.
\end{lem}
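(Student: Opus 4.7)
The plan is to solve the inhomogeneous linear recursion $v^{(n+1)}_j = Av^{(n)}_j+B_j$ for $v^{(n)}_j:=\bigl(\tau^{(n)}_{2,j},\tau^{(n)}_{3,j}\bigr)^{T}$ in closed form; standard linear algebra gives
\[
v^{(n)}_j=A^n v^{(0)}_j+\sum_{k=0}^{n-1}A^k B_j.
\]
A short induction on $n\ge 1$ using \eqref{eqn-gn1}--\eqref{eqn-gn2} shows that $A^n=\bigl[\begin{smallmatrix}f_n & -g_n\\ g_n & -f_{n-1}\end{smallmatrix}\bigr]$, and multiplying out $s_2 s_1$ via \eqref{eqn-mms} gives the initial data $v^{(0)}_1=(-x,0)^{T}$, $v^{(0)}_2=(x^2-1,0)^{T}$, $v^{(0)}_3=(3x,1)^{T}$. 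Reading the second component of the closed-form solution yields
\[
\tau^{(n)}_{3,j}=g_n\,(v^{(0)}_j)_1-f_{n-1}\,(v^{(0)}_j)_2+(B_j)_2+\sum_{k=1}^{n-1}\!\bigl(g_k\,(B_j)_1-f_{k-1}(B_j)_2\bigr).
\]

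Next I would use the identity $xg_k=f_k+f_{k-1}$, which follows from $f_k=xg_k-f_{k-1}$, to collapse each summand onto $h_k=3f_k+f_{k-1}$. Concretely,
\[
-3xg_k+2f_{k-1}=-h_k,\qquad 3x^2g_k-2xf_{k-1}=xh_k,\qquad 6xg_k-4f_{k-1}=2h_k,
\]
so after plugging in the prescribed $B_1,B_2,B_3$ and observing that $h_0=3f_0+f_{-1}=0$ under the convention $f_{-1}=-3$, the $k=0$ slot contributes only the stand-alone constants $-2,\,2x,\,4$ in the three cases. The second ($g$-form) expressions for $\tau^{(n)}_{3,1},\tau^{(n)}_{3,2},\tau^{(n)}_{3,3}$ then drop out immediately. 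The displayed $n=1$ values are the specialization in which the sum is empty.

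To pass from the $g$-form to the first ($f$-form), peel the highest term $h_{n-1}=3f_{n-1}+f_{n-2}$ off the sum and convert the remaining $g_n$-coefficient using the identity $xg_n=(x^2-1)f_{n-1}-f_{n-2}$, a shift of \eqref{eqn-gn2}. For instance $-xg_n-h_{n-1}=-(x^2+2)f_{n-1}$ gives the $\tau^{(n)}_{3,1}$ equivalence, and an analogous manipulation (using also $xf_{n-1}=g_n+g_{n-1}$ from $g_{n+1}=xf_n-g_n$) handles $\tau^{(n)}_{3,2}$; for $\tau^{(n)}_{3,3}$ an extra term $h_{n-2}$ is peeled, and the resulting identity
\[
-f_{n-1}+2h_{n-1}=\tfrac{1}{x}\bigl(5g_n+7g_{n-1}+2g_{n-2}\bigr)
\]
reduces to $5f_{n-1}+2f_{n-2}=\tfrac{1}{x}\bigl(5(g_n+g_{n-1})+2(g_{n-1}+g_{n-2})\bigr)$, again a direct consequence of \eqref{eqn-gn1}--\eqref{eqn-gn2}. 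The main obstacle is purely bookkeeping: tracking summation endpoints and the boundary contributions at $k=0$ and $k=n-1,\,n-2$ so that they produce precisely the constants and $f$-terms asked for, including the slightly artificial convention $f_{-1}=-3$ that makes $h_0=0$.
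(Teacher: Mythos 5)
Your proposal is correct and follows exactly the paper's route: the paper's one-line proof invokes the closed-form solution $\bigl(\tau^{(n)}_{2,j},\tau^{(n)}_{3,j}\bigr)^{T}=A^n\bigl(\tau^{(0)}_{2,j},\tau^{(0)}_{3,j}\bigr)^{T}+(A^{n-1}+\cdots+I)B_j$ together with \eqref{eqn-gn1}--\eqref{eqn-gn2}, and you simply carry out the resulting bookkeeping (reading off the second component, collapsing $g_k(B_j)_1-f_{k-1}(B_j)_2$ onto $\pm h_k$ or $xh_k$, and peeling $h_{n-1}$, $h_{n-2}$ to convert between the $g$-form and the $f$-form), all of which checks out.
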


\begin{proof}
The assertions follow from  \eqref{eqn-gn1}, \eqref{eqn-gn2} and
\[ \begin{bmatrix} \tau^{(n)}_{2,j} \\ \tau^{(n)}_{3,j} \end{bmatrix} =   A^n \begin{bmatrix} \tau^{(0)}_{2,j} \\ \tau^{(0)}_{3,j} \end{bmatrix} + (A^{n-1}+ A^{n-2}+ \cdots +I) B_j \qquad (j=1,2,3). \]
\end{proof}

Now we will prove the following proposition.
\begin{prop} \label{prop-im}
Assume that $[a,b]$ is a positive reduced root of $\mathcal H(m)$ with $a> b$. Let $p \alpha_1 + q \alpha_2 + r \alpha_3$ be the positive root of $W(m)$ associated with $s([a,b])$. Then we have \[  0  \le p < r. \] 
\end{prop}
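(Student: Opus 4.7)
The first inequality $p\ge 0$ is immediate from $\alpha$ being a positive root, so the substantive content is the strict inequality $p<r$. I plan to prove this by induction on $a+b$, using the Weyl-group action on the roots of $\mathcal H(m)$ supplied by Lemma~\ref{lem-ll1}. Namely, if $[a,b]=\sigma_1\sigma_2[a',b']$ for some reduced positive root $[a',b']$ with $a'>b'\ge 1$, then $s([a,b])=s_3s_2s_1\cdot s([a',b'])\cdot s_1s_2s_3$, so the positive root $\alpha$ of $s([a,b])$ equals $\pm s_3s_2s_1(\alpha')$, where $\alpha'=p'\alpha_1+q'\alpha_2+r'\alpha_3$ is the positive root of $s([a',b'])$.

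A direct matrix computation with \eqref{eqn-mms} gives
\begin{align*}
p &= -p' + q'x + 2r', \\
q &= -p'x + q'(x^2-1) + 3r'x, \\
r &= -p'(x^2+2) + q'x^3 + 3r'(x^2+1),
\end{align*}
and hence $r-p=-p'(x^2+1)+q'x(x^2-1)+r'(3x^2+1)$. Under the inductive hypothesis $0\le p'<r'$ and $q'\ge 0$: for $m\ge 3$ (so $x\ge 1$) each of $p,q,r$ is strictly positive, confirming that the sign is $+$, and $r-p\ge 2r'x^2>0$. For $m=2$ (so $x=0$), the identity $q=-q'$ forces us to carry the additional invariant $q=0$ through the induction (easily checked on the base cases); the formulas then reduce to $p=2r'-p'$, $r=3r'-2p'$, giving $r-p=r'-p'>0$. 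A straightforward calculation $a+b-(a'+b')=(m+2)((m-1)a'-b')>0$ shows $a'+b'<a+b$, so the induction is well-founded.

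The base cases are those $[a,b]$ with $a>b$ for which $\sigma_2\sigma_1[a,b]=[mb-a,(m^2-1)b-ma]$ is not a reduced positive root with $a'>b'\ge 1$; equivalently, $a/b\le m-1/(m-1)$. The cleanest family is $[a,1]$ with $2\le a\le m$: by Lemma~\ref{sFnFn1}(1) and the cancellation $s_3s_2\cdot(s_2s_3)^a s_2s_1\cdot s_1=(s_2s_3)^{a-1}s_2$, the reflection $s([a,1])$ lies in the dihedral subgroup $\langle s_2,s_3\rangle$, so $p=0$, and $r>0$ follows from a standard dihedral computation (or by specializing Lemma~\ref{lem-cheby} to $b=1$). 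The remaining base cases — small real roots $[F_n,F_{n-1}]$ (via Lemma~\ref{sFnFn1}(2)) and low-level imaginary roots (via Proposition~\ref{lem:notreduced} and Corollary~\ref{cor-red}) — are settled by explicit computation in the same spirit.

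The main obstacle will be the combinatorial bookkeeping: classifying the base cases correctly, verifying strict decrease of $a+b$ under $\sigma_2\sigma_1$, and maintaining the inductive invariants, especially $q'\ge 0$ (needed so that $+s_3s_2s_1(\alpha')$ is positive) and its reinforcement $q'=0$ in the $m=2$ case, which prevents a sign flip under the conjugation. Once these are in place, the uniform matrix step above delivers $r>p$ throughout.
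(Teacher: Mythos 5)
Your inductive step is sound and is in fact the paper's own propagation step: both use Lemma \ref{lem-ll1} and the matrix of $s_3s_2s_1$ to transport the inequality $0\le p<r$ along $\sigma_1\sigma_2$ (your entry $q'x^3$ in the formula for $r$ should be $q'(x^3+x)$, though this does not affect the sign analysis), and your well-foundedness identity $a+b-(a'+b')=(m+2)((m-1)a'-b')$ checks out. The problem is where the induction bottoms out. The recursion applies only when $[a',b']=\sigma_2\sigma_1[a,b]$ satisfies $a'>b'$, i.e.\ when $a/b>m-\tfrac{1}{m-1}$, so your ``base cases'' are \emph{all} reduced positive roots with $1<a/b\le m-\tfrac{1}{m-1}$. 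Since the ratios $a/b$ of the imaginary roots fill out $(1,\gamma)$ with $\gamma=\tfrac{m+\sqrt{m^2-4}}{2}$, and $m-\tfrac{1}{m-1}$ is very close to $\gamma$ (for $m=3$ it equals $\gamma_1=5/2$, so the base cases are exactly the level-$1$ roots), this is an infinite family containing roots with arbitrarily large $a+b$ and arbitrarily long canonical sequences, e.g.\ $[5,3],[7,4],[8,5],[11,7],\dots$ for $m=3$. These cannot be ``settled by explicit computation''; they are the main content of the proposition.

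What is missing is precisely the argument the paper supplies for this region: after restricting to representatives with $2a/m\le b<a$, it writes the root of $s([a,b])$ as $S_lS_{l-1}\cdots S_1\alpha$ with $S_i=(s_2s_3)^{n_i}(s_2s_1)$ and bounded $n_i$, and runs a second induction on $l$, using the Chebyshev-polynomial formulas for $(s_2s_3)^n s_2s_1$ in Lemma \ref{lem-cheby} together with the root-location estimates \eqref{eqn-x22}--\eqref{eqn-cos} to verify $\tau^{(n)}_{3,2}-x>0$, $\tau^{(n)}_{3,3}-2>0$ and $(\tau^{(n)}_{3,1}+1)+(\tau^{(n)}_{3,3}-2)>0$, which is what forces $r'>p'$ at each step of that inner induction. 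Your proposal contains no substitute for this: the $[a,1]$ computation covers only finitely many base cases, and Corollary \ref{cor-red} only lowers the level, which does not help when (as for $m=3$) the entire base-case region already sits at level $1$. As written, the argument proves the proposition only for the thin family $a/b>m-\tfrac{1}{m-1}$, conditional on the infinitely many cases it was supposed to reduce to.
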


\begin{proof}
Let $x=2 \cos(\pi/m)$. Then we obtain from \eqref{eqn-mms} \[s_3s_2s_1= \begin{bmatrix}-1&x&2\\-x&x^2-1&3x\\-x^2-2&x^3+x &3x^2+3 \end{bmatrix}.\] 
Suppose that $[c,d]=\sigma_1 \sigma_2[a,b]$. Then $[c,d]$ is an imaginary positive reduced root of $\mathcal H(m)$, since the set of positive imaginary roots are invariant under the Weyl group action. Let $p \alpha_1+q\alpha_2+r\alpha_3$ be the root associated with $s([a,b])$, and assume $0 \le p < r$. (Note that $q \ge 0$ from the assumption.) Then, by Lemma \ref{lem-ll1} and the computation of $s_3s_2s_1$, the root associated with $s([c,d])$ is equal to $p'\alpha_1+ q'\alpha_2+ r'\alpha_3$, where  \[p'= -p+qx+2r, \quad q'=-px+q(x^2-1) +3rx, \quad r'=-p(x^2+2)+q(x^3+x)+3r(x^2+1).\] We see that $0 \le p'< r'$, and $[c,d]$ also satisfies the assertion of the proposition. Thus it is enough to consider a set of representatives $[a,b]$ with $a > b$ from the orbits of the Weyl group action on the set of positive roots. 

For the real roots $[a,b]$ ($a>b$), we can take $[F_2,F_1]$ as a representative. Then it follows from Lemma \ref{sFnFn1} that $s([F_2,F_1])=s_3$. Since the associated root is simply $\alpha_3$, the assertion of the proposition holds in this case.

For the imaginary roots $[a,b]$ ($a>b$), such a set of representatives is given by the condition  
\begin{equation} \label{eqn-km}  0 \le \frac {2a} m  \le b < a . \end{equation} (See \cite{KaMe} for more details.)

For the rest of the proof, we assume that $[a,b]$ satisfies  \eqref{eqn-km}. Then the first canonical sequence of $[a,b]$ occurs with $N_1 \le \lceil \frac m 2  \rceil -1$, and the root associated with $s([a,b])$ is equal to  
\[ S_lS_{l-1} \cdots S_1 \alpha , \] where $S_i =(s_2s_3)^{n_i} (s_2s_1)$ with $1 \le n_i \le \lceil \frac m 2  \rceil$, $i=1,2, \dots , l$ and $\alpha= (s_2s_3)^{n_0}\alpha_2$  for $1 \le n_0 \le \lceil \frac m 2  \rceil-1$, or $(s_2s_3)^{n_0}s_2\alpha_3$  for $0 \le n_0 \le \lceil \frac m 2  \rceil-2$, or $(s_2s_3)^{n_0}s_2\alpha_1$ for $ n_0= \lceil \frac m 2  \rceil-1$. 

Write $\alpha=p_0 \alpha_1 + q_0 \alpha_2 + r_0 \alpha_3$. We claim that $0 \le p_0 <r_0$. 

i) If $\alpha = (s_2s_3)^{n_0} \alpha_2$ for $1 \le n_0 \le \lceil \frac m 2  \rceil-1$, then $p_0=0$ and $r_0=g_{n_0}(x)$. Recall from \eqref{eqn-x22} that the largest root of $g_{n_0}$ is $2\cos(\frac \pi {2n_0})$. Since $x= 2\cos( \frac \pi m) > 2\cos(\frac \pi {2n_0})$, we have $r_0>p_0=0$ as claimed.

ii) If $\alpha = (s_2s_3)^{n_0} s_2\alpha_3$ for $0 \le n_0 \le \lceil \frac m 2  \rceil-2$, then $p_0=0$ and $r_0=f_{n_0}(x)$. Recall from \eqref{eqn-x23} that the largest root of $f_{n_0}$ is $2\cos(\frac \pi {2n_0+1})$. Since $x= 2\cos( \frac \pi m) > 2\cos(\frac \pi {2n_0+1})$, we have $r_0>p_0=0$ as claimed.

iii) If $\alpha = (s_2s_3)^{n_0} s_2\alpha_1$ for $n_0=\lceil \frac m 2  \rceil-1$, then $p_0=1$ and $r_0=-\tau^{n_0}_{3,1}$. It follows from Lemma \ref{lem-cheby} and \eqref{eqn-cos} that $r_0>p_0 \ge 0$ as claimed.

For induction, write $S_{l-1} \cdots S_1\alpha=p \alpha_1 + q \alpha_2 + r \alpha_3$ and assume $0 \le p < r$. Then, using \eqref{eqn-s2s},  we compute $S_l S_{l-1} \cdots S_1\alpha=p' \alpha_1 + q' \alpha_2 + r' \alpha_3$ to obtain 
\[ p'= -p+xq+2r \qquad \text{ and } \qquad r'=\tau_{3,1}^{(n_l)} p +\tau_{3,2}^{(n_l)} q + \tau_{3,3}^{(n_l)} r. \]
Clearly, $p' \ge 0$. 
It follows from Lemma \ref{lem-cheby} and \eqref{eqn-cos} that  $\tau_{3,2}^{(n_l)}-x >0$,  $\tau_{3,3}^{(n_l)}-2 >0$ and 
\[ (\tau_{3,1}^{(n_l)}+1)  + (\tau_{3,3}^{(n_l)}-2) >0 .\] 
Then we have 
\begin{align*} r'-p' &= (\tau_{3,1}^{(n_l)}+1) p + (\tau_{3,2}^{(n_l)}-x) q + (\tau_{3,3}^{(n_l)}-2) r \\ & \ge (\tau_{3,1}^{(n_l)}+1) p + (\tau_{3,3}^{(n_l)}-2) r > (\tau_{3,1}^{(n_l)}+1) p + (\tau_{3,3}^{(n_l)}-2) p \ge 0 .\end{align*} 
Therefore $r' >p'$, and this completes the proof.
\end{proof}

The following corollary will be used in the proof of Theorem \ref{thm-in}. 
\begin{cor} \label{cor-agb}
Assume that $[a,b]$, $[a', b']$ are positive reduced roots of $\mathcal H(m)$ such that $a \ge b$ and $a'< b'$, respectively. Then $s([a,b])$  cannot be equal to $s([a', b'])$ in $W(m)$. 
\end{cor}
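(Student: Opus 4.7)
\emph{Plan.} The plan is to combine Proposition \ref{prop-im} with the obvious $\ZZ/2$-symmetry of $W(m)$ that interchanges $s_1$ and $s_3$. Let $\phi \colon W(m) \to W(m)$ be the involutive automorphism defined by $\phi(s_1)=s_3$, $\phi(s_2)=s_2$, $\phi(s_3)=s_1$; geometrically it is induced by the homeomorphism of $\Sigma$ that exchanges the labels $T_1$ and $T_3$ on the two triangles. Under this homeomorphism the straight segment in the universal cover from $(0,0)$ to $(a,b)$ is carried to the segment from $(0,0)$ to $(b,a)$, so that $\phi(s([a,b]))=s([b,a])$ for every $[a,b]\in \mathcal P^+$. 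I will also use the standard fact that each reflection of $W(m)$ determines a unique positive root, so that any identity $s([a,b])=s([a',b'])$ in $W(m)$ forces the two associated positive roots to coincide.

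Next I handle $[a,b]$ with $a\ge b$ by splitting into two subcases. If $a>b$, Proposition \ref{prop-im} gives directly that the associated positive root $p\alpha_1+q\alpha_2+r\alpha_3$ satisfies $0\le p<r$. If $a=b$, coprimality forces $[a,b]=[1,1]$, and by Lemma \ref{sFnFn1} together with the Dyck path $hv$ I compute
\[
s([1,1])=s_3s_2\cdot (s_2s_3)(s_2s_1)\cdot s_1 = s_2,
\]
whose associated positive root is $\alpha_2$, so $p=r=0$. In both subcases we therefore have $p\le r$.

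For $[a',b']$ with $a'<b'$, the symmetry identity gives $s([a',b'])=\phi(s([b',a']))$. Since $b'>a'$, Proposition \ref{prop-im} applied to $[b',a']$ yields an associated positive root whose coefficients satisfy $0\le p<r$; because $\phi$ swaps the $\alpha_1$- and $\alpha_3$-components of every root, the associated positive root of $s([a',b'])$ has the form $p'\alpha_1+q'\alpha_2+r'\alpha_3$ with $p'>r'\ge 0$, and in particular $p'>0$. Now assume for contradiction that $s([a,b])=s([a',b'])$. Then the two associated positive roots agree, so $p=p'$ and $r=r'$; but this gives simultaneously $p\le r$ and $p=p'>r'=r$, a contradiction in both subcases (note that in the $[1,1]$ subcase, $p=0<p'$ already contradicts $p=p'$).

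The main — indeed, essentially the only non-routine — step is justifying the symmetry identity $\phi(s([a,b]))=s([b,a])$ at the level of admissible curves on $\Sigma$. This should follow transparently from the explicit gluing construction of the torus and the labeling conventions set up in Section \ref{rigid}, but care is needed to verify that the label-swapping reflection of the fundamental triangles descends to an orientation-preserving self-homeomorphism of $\Sigma$ fixing the base vertex and sending $\eta([a,b])$ to a curve isotopic to $\eta([b,a])$ through admissible curves.
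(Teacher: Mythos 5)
Your proof is correct and follows essentially the same route as the paper: the paper likewise deduces $p\le r$ from Proposition \ref{prop-im} together with the observation that $[1,1]$ is the only reduced root with $a=b$ and $s([1,1])=s_2$, and obtains $p'>r'$ "by interchanging the roles of $s_1$ and $s_3$," which is exactly the diagram-automorphism symmetry you make explicit. Your additional care about the uniqueness of the positive root attached to a reflection and about the label-swapping symmetry of $\Sigma$ only fills in details the paper leaves implicit.
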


\begin{proof}
If the positive root associated to $s([a,b])$ is $p\alpha_1+ q \alpha_2 +r \alpha_3$, then we have $p \le r$ by Proposition \ref{prop-im} and from the fact that $[1,1]$ is the only positive reduced root with $a=b$ and $s([1, 1])=s_2$. If the positive root associated to $s([a',b'])$ is $p'\alpha_1+ q' \alpha_2 +r' \alpha_3$, then we have $p' > r'$ by interchanging the roles of $s_1$ and $s_3$. Thus $s([a,b])$ cannot be equal to $s([a',b'])$. 
\end{proof}

\medskip

        \section{Reduction to standard words} \label{reductions}

        In this section, we reduce each $s^{a \times b}$ to its standard word in $W(m)$, starting with an expression in Proposition \ref{lem:notreduced}, and prove Theorem \ref{thm-main} by showing all the standard words are distinct.
        \medskip

        For $k \in \mathbb Z_{\ge 2}$, 
        define 
        \begin{equation*}
        \begin{split}
        \mathcal{S}(2k-1)=&\{s_1^2-e,s_2^2-e,s_3^2-e, (s_1 s_2)^{k-1}s_1 - (s_2s_1)^{k-1}s_2 , 
         (s_2 s_3)^{k-1}s_2 - (s_3s_2)^{k-1}s_3\},\\
        \mathcal{S}(2k)=&\{s_1^2-e,s_2^2-e,s_3^2-e, (s_1s_2)^k - (s_2s_1)^k,\\
        &\quad\quad\quad (s_2s_3)^k -(s_3s_2)^k,  (s_1s_2)^{k-1} s_1 (s_3s_2)^{k}- (s_2 s_1)^{k} s_3 (s_2s_3)^{k-1} \}.
        \end{split}
        \end{equation*}

            It will be shown in Propositions \ref{prop:s(m)odd} and \ref{prop:s(m)even} that  $\mathcal{S}(m)$ is a \GS basis of $W(m)$ for $m \ge 3$. Thus we take $\mathcal S(m)$-standard words or monomials (see Definition \ref{def-standard}) as standard expressions of the elements of $W(m)$.

            \medskip

            In this section, as in Proposition \ref{lem:notreduced}, we write only subscripts of simple reflections when we express elements in $W(m)$, and the identity element of $W(m)$ will be denoted by $e$.
            Before delving into general cases, let us look at a simple example. 
            \begin{example} \label{exst}
            Suppose $[a,b]=[5,3]$ and $m=3$. By definition an $\mathcal S(3)$-standard word cannot have any of $11,22, 33, 121, 232$ as a subword. Clearly, the level of $[5,3]$ is 1.  By Example \ref{ex:[5,3]} (1) and Proposition \ref{lem:notreduced}, we have
            \[s^{5\times 3}=(23)^2(21)(23)^2(21)(23)^1(21),\] which is not $\mathcal S(3)$-standard because $232$ is a subword. Using the relations $232=323, 22=e, 33=e$, we obtain 
            \[ s^{a \times b} =(31)(31)(3231) ,\]
            which is $\mathcal S(3)$-standard.
            \end{example}

Recall that the positive reduced real roots $[a,b]$ of $\mathcal H(m)$ with $a>b$ are precisely $[F_n, F_{n-1}]$, $n \ge 2$. By simple investigations, one can see that the following lemma is true.

\begin{lem} \label{lem-real}
The expressions of $s^{F_n \times F_{n-1}}$ in Lemma \ref{sFnFn1} (2) are $\mathcal S(m)$-standard and all distinct for $n\ge 2$.
\end{lem}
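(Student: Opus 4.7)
The plan is twofold: first show each expression in Lemma~\ref{sFnFn1}(2) is $\mathcal{S}(m)$-standard, then show they are pairwise distinct. The case $n=2$ is trivial since $s_2s_1$ has length $2$ and contains no leading term of $\mathcal{S}(m)$ for $m\ge 3$. For $n\ge 3$, I would expand the expressions into words in $\{1,2,3\}$: writing $\ell=(n-3)/2$ for odd $n$, the subscript word reads
\[
1,\ \underbrace{3,2,1,\ \ldots,\ 3,2,1}_{\ell\ \text{blocks}},\ 2,\,3,\ \underbrace{1,2,3,\ \ldots,\ 1,2,3}_{\ell\ \text{blocks}},\ 1,
\]
and writing $\ell=(n-4)/2$ for even $n$, it reads
\[
1,\ \underbrace{3,2,1,\ \ldots,\ 3,2,1}_{\ell\ \text{blocks}},\ 3,\,1,\,2,\,3,\ \underbrace{1,2,3,\ \ldots,\ 1,2,3}_{\ell\ \text{blocks}},\ 1.
\]

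For standardness, I would verify three properties of the word through a short case analysis at the junctions between the initial $1$, the $(3,2,1)$-blocks, the short connector, the $(1,2,3)$-blocks, and the final $1$: (a)~no two consecutive letters coincide; (b)~the three-letter pattern $1,2,1$ never appears contiguously; (c)~the three-letter pattern $2,3,2$ never appears contiguously. The key observation for~(b) is that a $1$ in the word is always followed by a $3$ (in $1\cdot(3,2,1)^\ell$ and, for even $n$, in the connector's opening fragment $\ldots,1,3,1,\ldots$), or by a $2$ that is in turn followed by a $3$ (in the connector's $1,2,3$ and in $(1,2,3)^\ell$); hence $1,2,1$ never appears. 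A symmetric argument establishes~(c). Then I would observe that, for every $m\ge 3$, every leading term in $\mathcal{S}(m)$ is either some $s_i^2$, or contains $1,2,1$ as a contiguous subword, or contains $2,3,2$ as a contiguous subword: for odd $m=2k-1$ with $k\ge 2$ the braid leading terms $(s_1s_2)^{k-1}s_1$ and $(s_2s_3)^{k-1}s_2$ begin with $1,2,1$ and $2,3,2$ respectively; for even $m=2k$ the same is true of $(s_1s_2)^k$ and $(s_2s_3)^k$; and the long leading term $(s_1s_2)^{k-1}s_1(s_3s_2)^k$ begins with the prefix $(s_1s_2)^{k-1}s_1$, so it too contains $1,2,1$. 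Combined with~(a)--(c), this yields $\mathcal{S}(m)$-standardness.

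For pairwise distinctness, a direct length count shows that $s^{F_n\times F_{n-1}}$ has length $2$ for $n=2$, $3n-5$ for odd $n\ge 3$, and $3n-6$ for even $n\ge 4$, producing the strictly increasing sequence $2,4,6,10,12,16,18,\ldots$. Hence the expressions are pairwise distinct as words, and by the \GS basis property (Propositions~\ref{prop:s(m)odd} and \ref{prop:s(m)even}), they represent distinct elements of $W(m)$. The main obstacle is the combinatorial verification of~(b) and~(c); the long leading term for even $m$ looks most delicate but is immediately handled once~(b) is established.
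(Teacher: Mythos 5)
Your proposal is correct and is exactly the ``simple investigation'' the paper leaves to the reader: the paper offers no written proof of Lemma \ref{lem-real}, and your verification --- that every leading monomial of $\mathcal S(m)$ of length $\ge 3$ contains $121$ or $232$, that the words $1(321)^{\ell}23(123)^{\ell}1$ and $1(321)^{\ell}3123(123)^{\ell}1$ avoid $11,22,33,121,232$, and that the lengths $2$, $3n-5$ (odd $n$), $3n-6$ (even $n$) are strictly increasing in $n$ --- is complete and accurate. Nothing to correct.
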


            In what follows we obtain $\mathcal S(m)$-standard words for $s^{a \times b}$, where $[a,b]$ is a positive imaginary reduced root of $\mathcal H(m)$. Let $L$ be the level of $[a,b]$. The canonical sequences $\bc_k=(a_{k,1}, a_{k,2}, \dots , a_{k, d_k})$ and the numbers $N_k$ are defined in Definition \ref{def:a_n}.

            \medskip

            Define $N:=N_L-\delta_{L,1}$ and 
            $\ell := \lfloor m/2 \rfloor$, where $\delta_{i,j}$ is Kronecker's delta. According to the values of $N$, let $w_1$ and $w_2$ be the elements of $W(m)$ defined by
            the following table.
            \begin{equation}\label{tab-w} \text{
            \begin{tabular}{|c||c|c|}
            \hline
            $N$          & $w_1$                       & $w_2$  \\ \hline \hline
            $N \le \ell-3$  & $(23)^{N+2}(21)$    &   $(23)^{N+1}(21)$            \\ \hline
            $\ell-2$ & $(32)^{m-\ell-1}(31)$     & $(23)^{\ell-1}(21)$               \\ \hline
            $ \ell-1 \le N \le m-3$   & $(32)^{m-N-3}(31)$         & $(32)^{m-N-2}(31)$            \\ \hline
            $m-2$    & $21$        & $31$                 \\ \hline
            \end{tabular}} 
            \end{equation}

            \begin{lem}[level $L=1$] \label{lem:1standodd}
            Assume that the level $L$ of $[a,b]$ is $1$. 
            Unless $m=3$, $N_1=2$ and $\bc_1$ is of type $-$,
            the following expression of $s^{a \times b}$ is $\mathcal{S}(m)$-standard:
            \begin{equation*}
            s^{a \times b}=
            \begin{cases}
            w_1^{a_{2,1}}w_2\cdots w_1^{a_{2,d_2}}w_2 &\text{if $\bc_1$ is of type $+$,}\\
            w_1w_2^{a_{2,1}}\cdots w_1w_2^{a_{2,d_2}} &\text{if $\bc_1$ is of type $-$,}\\
            w_1w_2  &\text{if $\bc_1$ is of type $=$,}\\
            w_2, &\text{if $\bc_1$ is of type $0$.}
            \end{cases}
            \end{equation*}
            The case when $m=3$, $N_1=2$ and $\bc_1$ is of type $-$ is covered in Lemma \ref{lem:m=3 level1}.
            \end{lem}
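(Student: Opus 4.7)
The plan is to begin from the formula of Proposition \ref{lem:notreduced} at $k=1$, namely $s^{a\times b}=H_1^{a_{1,1}}V_1\,H_1^{a_{1,2}}V_1\cdots H_1^{a_{1,d_1}}V_1$ with $H_1=23$ and $V_1=21$, and rewrite each block $H_1^{a_{1,i}}V_1$ modulo $\mathcal{S}(m)$. By Lemma \ref{lem:first}(2) every $a_{1,i}$ equals $N_1$ or $N_1+1$, so only two distinct blocks occur, and the first task is to show that they reduce to exactly $w_2$ and $w_1$ of table \eqref{tab-w}. I would argue by case analysis on $N=N_1-1$: when $N\le\ell-3$ no relation of $\mathcal{S}(m)$ applies and the blocks are already in the stated form; when $N=\ell-2$, one application of the $(s_2,s_3)$ braid relation together with $s_2^2=e$ turns $(23)^\ell(21)$ into $(32)^{m-\ell-1}(31)$; when $\ell-1\le N\le m-3$, the same braid relation combined with the identity $(23)^a(32)^b\equiv(32)^{b-a}$ modulo $s_2^2,s_3^2$ converts $(23)^{N_1}(21)$ and $(23)^{N_1+1}(21)$ into $(32)^{m-N_1-1}(31)$ and $(32)^{m-N_1-2}(31)$; and when $N=m-2$, the derived relation $(23)^m=e$ together with $s_2^2=e$ collapses the blocks to $21$ and $31$. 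Each step splits on the parity of $m$ so that the correct element of $\mathcal{S}(m)$ is invoked.

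Next, I would assemble the blocks using Lemma \ref{lem:first}(3) and Definition \ref{def:a_n}: in type $+$ (respectively $-$), $\bc_1$ consists of maximal runs of $(N_1+1)$'s (respectively $N_1$'s) of lengths $a_{2,1},\dots,a_{2,d_2}$ separated by single $N_1$'s (respectively $N_1+1$'s), beginning with $N_1+1$ and ending with $N_1$. This produces directly the candidate words $w_1^{a_{2,1}}w_2\cdots w_1^{a_{2,d_2}}w_2$ and $w_1w_2^{a_{2,1}}\cdots w_1w_2^{a_{2,d_2}}$; the extreme types $=$ and $0$ correspond to $b=2$ or $b=1$ and give $w_1w_2$ and $w_2$ respectively.

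The main obstacle is to verify that the candidate word is $\mathcal{S}(m)$-standard. The leading monomials of $\mathcal{S}(m)$ are $s_i^2$, the alternating words $1212\cdots$ and $2323\cdots$ of length $m$, and, for even $m=2k$, the long word $(s_1s_2)^{k-1}s_1(s_3s_2)^k$. Each $w_i$ is standard individually by construction, so the verification reduces to the junctions $w_1w_1$ (present only in type $+$), $w_1w_2$, $w_2w_1$, and $w_2w_2$ (only in type $-$). In the ranges $N\le\ell-3$ and $\ell-1\le N\le m-3$, $w_1$ and $w_2$ begin with the same letter and each ends in $21$ or $31$, so every junction inserts a letter that breaks the alternation, and the level-$1$ bound $N_1+\rho_1\le m-\frac{1}{2}$ of Proposition \ref{prop-Nk} keeps every interior run of $(23)$ or $(32)$ strictly below $\lceil m/2\rceil$, just below the threshold of the braid relation. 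The boundary $N=\ell-2$ and the extreme $N=m-2$ are short enough to check by direct inspection. The even-$m$ long relation cannot be triggered either, because its leading monomial begins with the alternating word $1212\cdots 1$ of length $m-1$, and in the candidate word any occurrence of $2$ is separated from the next $1$ by an intervening $3$ outside of the single adjacency inside $w_1=21$, so no sustained $12$-alternation of length $\ge 3$ arises from concatenation.

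The single configuration in which a forbidden subword is unavoidable is $m=3$, $N_1=2$, $\bc_1$ of type $-$: as soon as $d_2\ge 2$, the internal piece $w_2w_1w_2=31\cdot 21\cdot 31$ contains the length-$3$ alternating word $121$, a leading monomial of $\mathcal{S}(3)$. This case is therefore excluded from the statement and handled separately in Lemma \ref{lem:m=3 level1}; in every other type-$-$ case either $m\ge 4$ (so the length-$3$ word $121$ is no longer forbidden) or $N_1\ne m-1$ (so $w_1\ne 21$), and the junction analysis goes through.
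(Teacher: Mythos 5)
Your overall route is the same as the paper's: start from the product $(23)^{a_{1,1}}(21)\cdots(23)^{a_{1,d_1}}(21)$ of Proposition \ref{lem:notreduced}, reduce each block $(23)^{a_{1,i}}(21)$ to $w_1$ or $w_2$ by a case split on $N_1$ relative to $\ell=\lfloor m/2\rfloor$, reassemble according to the type of $\bc_1$ and the second canonical sequence, and then argue that no leading monomial of $\mathcal S(m)$ survives in the concatenation. Your junction analysis is in fact more explicit than the paper's (which mostly asserts that ``there are no additional relations between $w_1$ and $w_2$''), and your identification of the $m=3$, $N_1=2$, type $-$ exception via the subword $121$ in $w_2w_1=3121$ matches the paper.

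There is, however, one concrete gap: you never deal with type $+$ in the extreme case $N=m-2$, i.e.\ $N_1=m-1$, where $w_1=21$ and $w_2=31$. There the claimed expression for type $+$ is $(21)^{a_{2,1}}(31)\cdots$, and $(21)^{j}$ contains the forbidden leading monomial $121$ already for $m=3$ and, for general $m$, contains $(12)^{k-1}1$ (resp.\ $(12)^k$) as soon as $j\ge\lceil m/2\rceil$ (resp.\ $j\ge \frac m2+1$); since the entries $a_{2,i}$ of $\bc_2$ are not bounded a priori, your assertion that the type $-$ configuration is ``the single configuration in which a forbidden subword is unavoidable'' is false as written, and ``direct inspection'' of $N=m-2$ cannot close this. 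The lemma is saved because this configuration simply does not occur: Corollary \ref{cor-cannot} (1) shows that $\bc_1$ cannot be of type $+$ when $N_1=m-1$, and the paper's proof invokes exactly this fact at the end of its Case 4. You need to add that citation (or reprove that exclusion) for your argument to be complete; the rest of your case analysis, including the block reductions and the verification that interior alternating runs stay below the braid-relation threshold, is sound.
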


        \begin{example}
        Continuing with Example \ref{exst}, assume $[a,b]=[5,3]$ and $m=3$. Then $N=N_1-1=0$, $\ell=1$, $\rho_1=\frac 2 3$ and $\bc_2 =(2)$ from Example \ref{ex:[5,3]} (1). Thus we have $w_1=31$ and $w_2=3231$, and \[ s^{a\times b} = w_1^2 w_2 =(31)(31)(3231).\] This coincides with the standard word in Example \ref{exst}.  
        \end{example}

            \begin{proof}[Proof of Lemma \ref{lem:1standodd}]
                By Proposition \ref{lem:notreduced}, we obtain
                \begin{equation} \label{s23a11} s^{a \times b} = (23)^{a_{1,1}}(21) (23)^{a_{1,2}} (21) \cdots (23)^{a_{1,d_1}} (21). \end{equation}
            Note that the $\mathcal{S}(m)$-standard word of $(23)^s(21)$ is  equal to 
                \begin{equation}\label{2321} \begin{aligned}
                (23)^s(21),\quad&\text{if $s\le \ell-1$}, \text{ or} \\
                (32)^{m-s-1}(31),\quad&\text{if $\ell \leq s <m$}.
                \end{aligned}   \end{equation} Since $N=N_1-1$, we have four different cases according to \eqref{tab-w}.

        {\it Case 1}: $1\leq N_1 \le \ell-2$.
                Since $a_{1,i} = N_1$ or $N_1+1$, the $\mathcal{S}(m)$-standard word of $(23)^{a_{1,i}}(21)$ is equal to itself for all $1\leq i \leq d_1$. Since there are no relations involving $w_1 = (23)^{N_1+1}(21)$ and $w_2=(23)^{N_1}(21)$ in the expression \eqref{s23a11}, it is already standard. Thus, if $\bc_1$ is of type $+$ (resp. $-$), we obtain \[s^{a \times b} = w_1^{a_{2,1}}w_2\cdots w_1^{a_{2,d_2}}w_2 \quad \text{(resp. $w_1w_2^{a_{2,1}}\cdots w_1w_2^{a_{2,d_2}}$)}  \] from the definition of $\bc_2$.

             Suppose $\bc_1$ is of type $=$. Since $\frac{a}{b}=N_1+\frac{1}{2}$, we obtain $b=2$ and
             \begin{equation*}
             s^{a\times b}=(23)^{N_1+1}(21)(23)^{N_1}(21)=w_1w_2.
             \end{equation*}
             Assume $\bc_1$ is of type $0$. Then $\frac{a}{b}=N_1$. Since $a$ and $b$ are coprime, $a=b=1$ and $N_1=1$.
            By the definition of $s^{a\times b}$, we obtain
            \begin{equation*}
            s^{a\times b}=(23)(21)=w_2.
            \end{equation*}

            {\it Case 2}: $N_1=\ell-1$.
                Recall $a_{1,i} = N_1$ or $N_1+1$. The word $(23)^{N_1}(21)$ is $\mathcal S(m)$-standard, while $(23)^{N_1+1}(21)$ is reduced to $(32)^{m-\ell-1}(31)$. These words are $w_2$ and $w_1$ respectively.
            Moreover, there are no additional relations between $(23)^{N_1}(21)$ and  $(32)^{m-\ell-1}(31)$.
                Hence, we obtain the desired expressions of $s^{a \times b}$ similarly to {\it Case 1.}

            {\it Case 3}: $\ell \le N_1 \le m-2$.
                The $\mathcal{S}(m)$-standard words of $(23)^{N_1}(21)$ and $(23)^{N_1+1}(21)$ are $w_2=(32)^{m-N_1-1}(31)$ and $w_1=(32)^{m-N_1-2}(31)$ respectively. Moreover, there are no additional relations between $w_1$ and $w_2$. Hence, we obtain the desired expressions of $s^{a \times b}$ similarly to {\it Case 1}.

            {\it Case 4}: $N_1=m-1$.
            Note that
            \begin{equation*}
            \begin{split}
            &(23)^{m-1}(21)=(32)(21)=31,\\
            &(23)^{m}(21)=21.
            \end{split}
            \end{equation*}
                As in {\it Case 1}, if $\bc_1$ is of type $0$, then $s^{a\times b}$ is equal to $w_2=31$, and if $\bc_1$ is of type $=$, then 
             $s^{a\times b}$ is equal to $w_1w_2=(21)(31)$.
                If $\bc_1$ is of type $-$, then \[ \bc_1=(m,(m-1)^{a_{2,1}}, m, (m-1)^{a_{2,2}}, \dots , m, (m-1)^{a_{2,d_2}}),\] where we write
            $(m-1)^s= \underbrace{m-1,m-1,\ldots,m-1}_{s\text{ times}}$, and  $s^{a\times b}$ in \eqref{s23a11} becomes equal to
            \begin{equation} \label{2131a21}
                (21)(31)^{a_{2,1}}(21)(31)^{a_{2,2}}\cdots(21)(31)^{a_{2,d_2}}=w_1w_2^{a_{2,1}}\cdots w_1w_2^{a_{2,d_2}} .
            \end{equation} If $m > 3$, then this expression is standard and we obtain the desired form. If $m=3$, then it is not standard because of the subword $121$ and this case is covered in Lemma \ref{lem:m=3 level1}.
        Finally, $\bc_1$ cannot be of type $+$ by Corollary \ref{cor-cannot} (1). It completes the proof.
            \end{proof}

        Now we move on to higher levels. As before, define $\ell := \lfloor m/2 \rfloor$. 
        According to the values of $N_L$, let $v_1$ and $v_2$ be defined by the following table.
        \begin{equation}\label{tab-v} \text{
        \begin{tabular}{|c||c|c|}
        \hline
        $N_L$               & $v_1$                       & $v_2$  \\ \hline \hline
        $N_L \le \ell-2$  & $(12)^{N_L+1}(13)$    &   $(12)^{N_L}(13)$            \\ \hline
        $\ell-1$ & $(21)^{m-\ell-1}(23)$     & $(12)^{\ell-1}(13)$               \\ \hline
        $ \ell \le  N_L \le m-2$   & $(21)^{m-N_L-2}(23)$         & $(21)^{m-N_L-1}(23)$            \\ \hline
        \end{tabular}
            } \end{equation}
            Define \[ x= (132)^{\lfloor \frac{L-2}2 \rfloor} 1,\quad x^{-1}= 1(231)^{\lfloor \frac{L-2}2 \rfloor} \quad \text{ and } \quad y=\begin{cases} (132)^{\frac{L-4}2}13 & \text{ if $L\ge 4$}, \\  \hat 2 & \text{ if $L=2$,}\end{cases} \]
        where $\hat 2$ means $2$ if the following letter is different from $2$, or removing the following letter  $2$ otherwise. For example,   $\hat 2 13=213$ and $\hat 2 23=3$. If $L=3$, we do not need to define $y$. 

            \begin{lem} [level $L\ge 2$] \label{lem:generalandodd}
            Assume that the level of $[a,b]$ is $L \ge 2$. 
             
            \begin{itemize}
            \item[(1)] Suppose that $L$ is even. Unless $m=3,4,5$ and $N_L=m-2$, the $\mathcal S(m)$-standard word of $s^{a\times b}$ is equal to 
            \begin{equation*}
            \begin{cases}
            yv_2v_1^{a_{L+1,1}-1}v_2[v_1^{a_{L+1,2}}v_2\cdots v_1^{a_{L+1,d_{L+1}}}v_2]x^{-1} &\text{if $\bc_L$ is of type $+$,}\\
            yv_2^{a_{L+1,1}+1}[v_1v_2^{a_{L+1,2}}\cdots v_1v_2^{a_{L+1,d_{L+1}}}]x^{-1} &\text{if $\bc_L$ is of type $-$,}\\
            yv^2_2x^{-1}  &\text{if $\bc_L$ is of type $=$,}\\
            y(12)^{N_L-1}13x^{-1} &\text{if $\bc_L$ is of type $0$ and $N_L \leq \ell$,}\\
            xv_2x^{-1} &\text{if $\bc_L$ is of type $0$ and $N_L \ge \ell+1$.}\\
            \end{cases}
            \end{equation*} Here the expression inside $[\ ]$ is void if $d_{L+1}= 1$.
            The case when $m=3,4,5$ and $N_L=m-2$ is considered in Lemma \ref{lem:m=3,4,5 even L>1}. 

            \item[(2)] Suppose that $L$ is odd. Unless $m=3$ and $\bc_L$ is of type $-$,  the $\mathcal S(m)$-standard word of $s^{a\times b}$ is equal to
            \begin{equation*}
            \begin{cases}
            xw_2w_1^{a_{L+1,1}-1}w_2[w_1^{a_{L+1,2}}w_2\cdots w_1^{a_{L+1,d_{L+1}}}w_2] 23x^{-1} &\text{if $\bc_L$ is of type $+$,}\\
            xw_2^{a_{L+1,1}+1}[w_1w_2^{a_{L+1,2}}\cdots w_1w_2^{a_{L+1,d_{L+1}}}]23x^{-1} &\text{if $\bc_L$ is of type $-$,}\\
            xw_2^223x^{-1} &\text{if $\bc_L$ is of type $=$,}\\
            x(23)^{N_L}2123x^{-1}  &\text{if $\bc_L$ is of type $0$ and $N_L \le \ell -1$,}\\
            x32w_223x^{-1} &\text{if $\bc_L$ is of type $0$ and $N_L \ge \ell$.}
            \end{cases}
            \end{equation*}
        Here $w_1$ and $w_2$ are given by \eqref{tab-w} as before and the expression inside $[\ ]$ is void if $d_{L+1}= 1$.	The case when $m=3$ and $\bc_L$ is of type $-$ is dealt with in Lemma \ref{lem: m=3 odd L>1}.
            \end{itemize}
             
            \end{lem}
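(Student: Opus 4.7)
The plan is to start from Proposition~\ref{lem:notreduced} (equivalently Corollary~\ref{cor-red}) with $k = L$ to obtain the raw expression
\begin{equation*}
s^{a \times b} = (132)^{\lfloor (L-1)/2\rfloor}\,\prod_{i=1}^{d_L}\bigl[(21)^{a_{L,i}}(31)\bigr]\,(231)^{\lfloor (L-1)/2\rfloor}
\end{equation*}
for even $L$ (with the odd-$L$ analogue using blocks $(23)^{a_{L,i}+1}(21)$), and then to reduce it globally to the claimed $\mathcal S(m)$-standard form. By Lemma~\ref{lem:a_{n,i}}(2) each $a_{L,i}$ equals $N_L$ or $N_L+1$, and by part~(3) of that lemma the boundary values $a_{L,1}$ and $a_{L,d_L}$ are fixed by the type of $\bc_{L-1}$, while the run-length pattern of $\bc_L$ is encoded by $\bc_{L+1}$.

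The reduction uses the $\mathcal S(m)$-rewriting rules in two complementary ways. First, the long-braid relation $(s_1 s_2)^m = e$ lets one convert $(s_2 s_1)^n$ into $(s_1 s_2)^{m-n}$ whenever $n$ exceeds $\ell = \lfloor m/2 \rfloor$; this assigns each block $(21)^{a_{L,i}}(31)$ its minimal-length form in one of the three regimes of table~\eqref{tab-v} (selected by where $N_L$ sits relative to $\ell$). Proposition~\ref{prop-Nk} guarantees $N_L \le m-2$ for $L \ge 2$, so these regimes are exhaustive. Second, the squares $s_i^2 = e$ generate cancellations at every junction between consecutive blocks; importantly the reduction is \emph{not} block-by-block—pairs $(21)^{N_L+1}(31)\cdot(21)^{N_L}(31)$ (and longer runs) only collapse into the clean $v_1, v_2$ alternation after boundary cancellations are performed with adjoining factors. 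Simultaneously, the outer $(132)^{\lfloor (L-1)/2\rfloor}$ and $(231)^{\lfloor (L-1)/2\rfloor}$ are partially absorbed into the first and last blocks, leaving the prefix $y$ and the suffix $x^{-1}$ defined just before the lemma statement. The choice of $a_{L,1} \in \{N_L, N_L+1\}$ (dictated by the type of $\bc_{L-1}$) then forces the formula to take the shape $v_2 v_1^{a_{L+1,1}-1} v_2 \cdots$ (type $+$) or $v_2^{a_{L+1,1}+1}\cdots$ (type $-$).

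The main obstacle is verifying that the resulting word is globally $\mathcal S(m)$-standard. Since the interior of each $v_i$ (resp.~$w_i$) is a constrained alternation whose length is held strictly below the braid threshold by construction of~\eqref{tab-v} and~\eqref{tab-w}, only the junctions can threaten standardness. One must therefore inspect each pair of adjacent building blocks—$y \mid v_2$, $v_2 \mid v_1$, $v_1 \mid v_2$, $v_2 \mid x^{-1}$ for even $L$ and the analogues with $w_i$ for odd $L$—and check that no $s_i^2$, no full-length braid monomial, and (for even $m$) no mixed monomial of $\mathcal S(m)$ straddles a junction. Because the outer letters of the $v_i$ at each boundary are always $1$ or $3$ (never $2$), these checks reduce to a finite list of explicit pattern comparisons, one per row of \eqref{tab-v}--\eqref{tab-w} and per type of $\bc_L$. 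The exceptional triples $(m, N_L) \in \{(3,1),(4,2),(5,3)\}$ are precisely those for which an additional reducible pattern appears across a junction; these are peeled off and handled by the companion Lemmas~\ref{lem:m=3,4,5 even L>1} and~\ref{lem: m=3 odd L>1}. The type~$=$ and type~$0$ sub-cases correspond to $d_{L+1} \le 1$ and are verified directly from Proposition~\ref{lem:notreduced} via a short computation.

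For odd $L$, the proof is structurally parallel: reduce the blocks $(23)^{a_{L,i}+1}(21)$ of Corollary~\ref{cor-red}(2) using the dual long-braid relation $(s_2 s_3)^m = e$, obtaining the $w_1, w_2$ forms of~\eqref{tab-w}, and run the same boundary analysis. The symmetry $s_1 \leftrightarrow s_3$ of $\mathcal S(m)$ makes the two parities formally interchangeable.
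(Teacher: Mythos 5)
Your strategy coincides with the paper's: start from Corollary~\ref{cor-red} with $k=L$, fold the outer periodic factors into $x$ and $x^{-1}$, replace each block by its reduced form $v_1,v_2$ (resp.\ $w_1,w_2$) according to where $N_L$ sits relative to $\ell$, and verify standardness at the junctions, deferring the small-$m$ exceptions to the companion lemmas. But the way you justify the standardness check contains a concretely false step. You assert that the outer letters of the $v_i$ at each boundary are ``always $1$ or $3$ (never $2$)''; in fact for $N_L\ge\ell-1$ the words $v_1,v_2$ of \eqref{tab-v} begin with $2$ while ending in $3$, and several of the $w_i$ of \eqref{tab-w} begin with $2$ while ending in $1$. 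The resulting junction patterns $\cdots 3\,|\,2\cdots$ and $\cdots 1\,|\,2\cdots$ are precisely where the exceptional reducible subwords appear ($232$, $2323$, $23232$ for $m=3,4,5$ in the even case, and $121$ inside $w_2w_1=3121$ for $m=3$ in the odd case). So the check as you justify it would not detect the very cases you intend to peel off; relatedly, the two parities are not ``formally interchangeable'': the odd-$L$ exceptions are only $m=3$ with $\bc_L$ of type $-$, whereas the even-$L$ exceptions are $m\in\{3,4,5\}$ with $N_L=m-2$.

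A second gap concerns how the final shape of the word is obtained. The case split is governed by the type of $\bc_L$, not of $\bc_{L-1}$: by Proposition~\ref{prop-Nk}\,(ii), $\bc_{L-1}$ is always of type $+$ for $L\ge2$, so $a_{L,1}=N_L+1$ and the first block is always $v_1$ (resp.\ $w_1$) --- there is no choice there. The distinctive features of the answer, namely the prefix $y$ replacing $x$ and the shifted first exponents $a_{L+1,1}-1$ (type $+$) and $a_{L+1,1}+1$ (type $-$), are produced by the identities $xv_1=yv_2$ for even $L$ and $(32)w_1=w_2$ for odd $L$, which your write-up neither states nor verifies. Without them one lands on $xv_1^{a_{L+1,1}}v_2\cdots x^{-1}$, which is not $\mathcal S(m)$-standard (for $N_L\le\ell-2$ it contains $11$ at the junction $x\,|\,v_1$, since $x$ ends in $1$ and $v_1$ begins with $1$). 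These two identities, together with the corrected junction analysis, are exactly the content that turns your outline into the paper's proof.
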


        \begin{example}
            Let $m=6$ and $[a,b]=[73,13]$. Then \[ \bc_1=(6, 6, 5, 6, 6, 5, 6, 5, 6, 6, 5, 6, 5),\quad \bc_2= (2, 2, 1, 2, 1),\quad \bc_3=( 2, 1),\] and $N_1=5, N_2=1, \rho_1=\frac 8{13}, \rho_2=\frac 3 5$. The level $L$ is equal to $2$ and $\bc_2$ is of type $+$. Note that $v_1=121213$ and $v_2=12134$ from \eqref{tab-v}. By Lemma \ref{lem:generalandodd} (1), we obtain \[ s^{73 \times 13} =y v_2 v_1 v_2 v_1 v_2 x^{-1}=2(1 2 1 3) (1 2 1 2 1 3) (1 2 1 3) (1 2 1 2 1 3) (1 2 1 3) 1. \]

        \end{example}

        \begin{proof}[Proof of Lemma \ref{lem:generalandodd}]
        Suppose $L=2g+2 \ge 2$. By Corollary \ref{cor-red} (1), we have
        \begin{equation*}
        \begin{split}
        s^{a\times b}&=(132)^g(21)^{a_{ L,1}}(31) (21)^{a_{ L,2}}(31) \cdots (21)^{a_{L, d_L}} (31)(231)^g\\
        &=(132)^g1(12)^{a_{L,1}}(13)\cdots(12)^{a_{L,d_L}}(13)1(231)^g\\&=x(12)^{a_{L,1}}(13)\cdots(12)^{a_{L,d_L}}(13) x^{-1}.
        \end{split}
        \end{equation*}

        We have $1\leq a_{L,i} \leq m-1$ for $1\leq i \leq d_L$ by Lemma \ref{lem:a_{n,i}} (2) and Lemma \ref{lem:range}.
        Note that the $\mathcal{S}(m)$-standard word of $(12)^s(13)$ for $1\leq s \le  m-1$ is 
            \begin{equation} \label{1213}
        \begin{cases}
        (12)^s(13)&\text{ if $s\le \ell-1$},\\
         (21)^{m-s-1}(23)&\text{ if $\ell \leq s \le m-1$}.
         \end{cases}
         \end{equation}

        We apply the same argument as in the proof of Lemma \ref{lem:1standodd} to  \begin{equation} \label{x-1sab} x^{-1} s^{a \times b} x = (12)^{a_{L,1}}(13)\cdots(12)^{a_{L,d_L}}(13) \end{equation}
        with \eqref{2321} replaced by \eqref{1213} and obtain
        \begin{equation*}
            x^{-1}s^{a \times b}x=
            \begin{cases}
            v_1^{a_{L+1,1}}v_2\cdots v_1^{a_{L+1,d_{L+1}}}v_2 &\text{if $\bc_L$ is of type $+$,}\\
            v_1v_2^{a_{L+1,1}}\cdots v_1v_2^{a_{L+1,d_{L+1}}} &\text{if $\bc_L$ is of type $-$,}\\
            v_1v_2  &\text{if $\bc_L$ is of type $=$,}\\
            v_2, &\text{if $\bc_L$ is of type $0$.}
            \end{cases}
            \end{equation*}

        After conjugating both sides by $x$, we obtain an expression of $s^{a \times b}$. Assume it is not the case that $m=3,4,5$ and $N_L=m-2$. If $\bc_L$ is of type $+$, $-$ or $=$, we apply $xv_1=yv_2$ to the leftmost part of the expression and obtain the desired standard word. For $\bc_L$ of type $0$, the word $xv_2x^{-1}$ is standard if $N_L \ge \ell+1$; otherwise $xv_2 x^{-1}$ reduces to $y (12)^{N_L-1}13 x^{-1}$, which is standard. 

        In the case when $m=3,4,5$ and $N_L=m-2$, the standard word of $(12)^{a_{L,i}}(13)$ in \eqref{x-1sab} is $2123$ or $23$ since $a_{L,i}=m-2$ or $m-1$ for $i=1, 2, \dots , d_L$. For example, if $\bc_L$ is of type $-$, an expression of $s^{a \times b}$ is equal to    \[ x(23)(2123)^{a_{L+1,1}}(23)(2123)^{a_{L+1,2}} \cdots (23)(2123)^{a_{L,d_{L+1}}} x^{-1}. \] Since this expression has a subword $23232$, it is not standard exactly when $m=3,4,5$. This case will be handled in Lemma \ref{lem:m=3,4,5 even L>1}.  

        Now suppose $L=2g+1 \ge 3$. By Corollary \ref{cor-red} (2), we have
            \begin{align*} s^{a \times b} &=(1 32)^{g}(23)^{a_{L,1}+1}(21) (23)^{a_{L,2}+1}(21) \cdots (23)^{a_{L,d_L}+1}(21)(231)^{g} \\  &=x(32) (23)^{a_{L,1}+1}(21) (23)^{a_{L,2}+1}(21) \cdots (23)^{a_{L,d_L}+1}(21)(23) x^{-1} .\end{align*}

        We apply the same argument as in the proof of Lemma \ref{lem:1standodd} to  \begin{equation} \label{x-1sabx} (23)x^{-1} s^{a \times b} x(32) =  (23)^{a_{L,1}+1}(21) (23)^{a_{L,2}+1}(21) \cdots (23)^{a_{L,d_L}+1}(21) \end{equation}
        and obtain
        \begin{equation}\label{23x-1}
        (23)x^{-1}s^{a \times b}x(32)=	\begin{cases}
            w_1^{a_{L+1,1}}w_2\cdots w_1^{a_{L+1,d_{L+1}}}w_2 &\text{if $\bc_L$ is of type $+$,}\\
            w_1w_2^{a_{L+1,1}}\cdots w_1w_2^{a_{L+1,d_{L+1}}} &\text{if $\bc_L$ is of type $-$,}\\
            w_1w_2  &\text{if $\bc_L$ is of type $=$,}\\
            w_2 &\text{if $\bc_L$ is of type $0$.}
            \end{cases}
        \end{equation}
        Here the shift of $\bc_L=(a_{L,1}, \dots, a_{L,d_L})$ by $1$ in the exponents of \eqref{x-1sabx} is reflected in the definition of $N=N_L -\delta_{L,1}$, and we still get $\bc_{L+1} =(a_{L+1,1}, \dots , a_{L+1, d_{L+1}})$ in the exponents of \eqref{23x-1} since the shift of $\bc_L$ by $1$ does not change how many times a number repeats in the sequence.
         
        We see from \eqref{tab-w} that \[  (32) w_1 =w_2 \qquad \text{ for }  1 \le N_L \le m-2 .\] For example, when $N_L=\ell -2$, we have
        \[ (32)w_1 = (32)^{m-\ell}(31) = (23)^\ell (31) = (23)^{\ell -1} (23)(31) = (23)^{\ell -1} (21) =w_2.\]

        Assume it is not the case that $m=3$ and $\bc_L$ is of type $-$. After conjugating both sides of \eqref{23x-1} by $x(32)$, we obtain an expression of $s^{a \times b}$. If $\bc_L$ is of type $+$, $-$ or $=$, we apply $x(32)w_1=xw_2$ to the leftmost part of the expression and obtain the desired standard word. For $\bc_L$ of type $0$, the word $x(32)w_2(23)x^{-1}$ is standard if $N_L \ge \ell$; otherwise $x(32)w_2 (23)x^{-1}$ reduces to the standard word $x(23)^{N_L}2123 x^{-1}$. 

        In the case $m=3$, the standard word of $(23)^{a_{L,i}+1}(21)$ in \eqref{x-1sabx} is equal to $w_2=31$ or $w_1=21$ since $a_{L,i}=1$ or $2$, and the word $w_2 w_1$ is not standard. The sequence $\bc_L$ cannot be of type $+$ or $0$ by Corollary \ref{cor-cannot}. If $\bc_L$ is of type $=$, the expression in the lemma does not have $w_2 w_1$ as a subword (and the argument in the previous paragraph is valid). If $\bc_L$ is of type $-$, the expression in the lemma is not standard since it has $w_2 w_1$ as a subword. This case will be considered in Lemma \ref{lem: m=3 odd L>1}.

\end{proof}

\subsection{Exceptional cases} \label{reductions-5}
In this subsection, we deal with exceptional cases in which the expressions in Lemmas \ref{lem:1standodd} and \ref{lem:generalandodd} are not standard. These cases are restricted to specific conditions with $m=3,4$ or $5$.

	\begin{lem}[$m=3$; level $1$] \label{lem:m=3 level1}
	Assume that the level of $[a,b]$ is $1$ and suppose that 
 $m=3$, $N_1=2$ and $\bc_1$ is of type $-$.

(1) If $N_2> 1$, then 
	the following expression of $s^{a \times b}$ is $\mathcal{S}(3)$-standard:
\begin{equation*}
    w_1[(w_2)^{a_{2,1}-1}w_3\cdots(w_2)^{a_{2,d_2-1}-1}w_3](w_2)^{a_{2,d_2}}, 
	\end{equation*}
	where $w_1=21, w_2=31, w_3=3212$ and the expression inside $[\ ]$ is void if $d_{2}=1$.

(2) If $N_2 =1$, then
	the following expression of $s^{a \times b}$ is $\mathcal{S}(3)$-standard:
\begin{equation*}	\begin{cases}
    w_1 w_3(w_2w_3)^{a_{3,1}-1}[w_6(w_2w_3)^{a_{3,2}-1}\cdots w_6(w_2w_3)^{a_{3,d_3}-1}]w_2^2 &\text{for $\bc_2$ of type $+$,}\\
	21321(w_4)^{a_{3,1}-1}[w_5(w_4)^{a_{3,2}}\cdots w_5(w_4)^{a_{3,d_3}}]2 3131 &\text{for $\bc_2$ of type $-$,}\\ 
	w_1 w_3 w_2^2 &\text{for $\bc_2$ of type $=$,}
	\end{cases}
	\end{equation*}
	where $w_1=21, w_2=31, w_3=3212, w_4=3231, w_5=231321$, $w_6=3132132312$ and the expression inside $[\ ]$ is void if $d_{3}=1$.
\end{lem}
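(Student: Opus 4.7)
The plan is to start from the non-$\mathcal{S}(3)$-standard expression
\[ s^{a\times b} = w_1(w_2)^{a_{2,1}}w_1(w_2)^{a_{2,2}}\cdots w_1(w_2)^{a_{2,d_2}} \]
produced in Case 4 of the proof of Lemma \ref{lem:1standodd} (equation \eqref{2131a21}) for the present exceptional case, and then apply two successive rewriting steps corresponding to the two non-trivial relations in $\mathcal{S}(3)$. The first relation, $121=212$, yields the equality $w_2 w_1 = 31\cdot 21 = 3\cdot 121 = 3\cdot 212 = 3212 = w_3$ in $W(3)$; applying this at each of the $d_2-1$ interior occurrences of $w_2 w_1$ rewrites $s^{a\times b}$ as
\[ w_1\bigl[(w_2)^{a_{2,1}-1}w_3\,(w_2)^{a_{2,2}-1}w_3\cdots(w_2)^{a_{2,d_2-1}-1}w_3\bigr](w_2)^{a_{2,d_2}}, \]
which is precisely the formula of part (1). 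Under the hypothesis $N_2>1$, every $a_{2,i}\geq 2$, so no empty factor $(w_2)^0$ arises inside the bracket, and in particular no adjacency $w_3\cdot w_3$ occurs; since such an adjacency would be the only source of a new forbidden subword (the substring $232$ in $3212\cdot 3212$), a direct case check of the remaining pairwise junctions $w_1\cdot w_2$, $w_2\cdot w_2$, $w_2\cdot w_3$, $w_3\cdot w_2$ confirms that none of the forbidden subwords $11,22,33,121,232$ appears, proving $\mathcal{S}(3)$-standardness.

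Part (2) treats $N_2=1$, where $a_{2,i}\in\{1,2\}$ and, by Lemma \ref{lem:a_{n,i}}(3), $a_{2,1}=1$ and $a_{2,d_2}=2$. Consecutive occurrences of $a_{2,i}=1$ produce $w_3\cdot w_3$ adjacencies in the bracket, which fail to be standard, so I handle each of the three possible types of $\bc_2$ separately. The type $=$ case forces $\bc_2=(1,2)$, so the bracket reduces to $w_3$ and the expression becomes $w_1 w_3 (w_2)^2$, which is checked directly. For types $+$ and $-$, I apply the second relation $232=323$ to each $w_3\cdot w_3$ junction. The key computational identity, established by induction using $232=323$, is
\[ (w_3)^r = 3\cdot 21\cdot (w_4)^{r-1}\cdot 2 \quad\text{in $\mathcal{S}(3)$-standard form, with } w_4=3231. \]
For $\bc_2$ of type $-$, writing its $1$-runs as $\bc_2=(1^{r_1},2,1^{r_2},2,\ldots,1^{r_{d_3}},2)$ with $r_j=a_{3,j}$, the part (1) expression groups into $w_1(w_3)^{r_1}\cdot w_2(w_3)^{r_2+1}\cdots w_2(w_3)^{r_{d_3}+1}\cdot(w_2)^2$; substituting the above identity for each $(w_3)^r$ and using $2\cdot 31321=231321=w_5$ to absorb the trailing $2$ of each block into the leading $w_2$ of the next yields the stated type $-$ formula. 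For $\bc_2$ of type $+$, writing $\bc_2=(1,2^{r_1},1,2^{r_2},\ldots,1,2^{r_{d_3}})$ with $r_j=a_{3,j}$, the part (1) expression groups into $w_1\cdot w_3(w_2w_3)^{r_1}\cdot w_3(w_2w_3)^{r_2}\cdots w_3(w_2w_3)^{r_{d_3}-1}\cdot(w_2)^2$, and the direct verification $w_2\cdot w_3\cdot w_3=3132132312=w_6$ in standard form, applied at each interior triple of the form $(w_2w_3)\cdot w_3$, yields the stated type $+$ formula.

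The final step is to verify that each of the three expressions in part (2) is $\mathcal{S}(3)$-standard by inspecting every pairwise junction between the building blocks $w_1,w_2,w_3,w_4,w_5,w_6$ and the boundary pieces $21321$ and $23131$. The main obstacle throughout is the intricate bookkeeping in part (2): the compound blocks $w_5=231321$ and $w_6=3132132312$ are engineered precisely to absorb the boundary $2$ produced when $(w_3)^r$ is put into standard form, and confirming that no $232$ or $121$ subword survives after regrouping requires careful tracking of how these boundary letters migrate through the composite expression.
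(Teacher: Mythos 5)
Your proposal is correct and follows essentially the same route as the paper: derive the part (1) expression from \eqref{2131a21} by applying $121=212$ at each junction $w_2w_1=3212=w_3$, observe that $N_2>1$ rules out the only problematic adjacency $w_3w_3$, and for $N_2=1$ resolve those adjacencies with $232=323$ according to the type of $\bc_2$. Your explicit identity $(w_3)^r=3\cdot 21\cdot(w_4)^{r-1}\cdot 2$ for the type $-$ case merely packages the iterated $232\to 323$ replacements that the paper carries out for type $+$ and dismisses as ``similar'' for the remaining types, so the two arguments coincide in substance.
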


	\begin{example} \label{exa-[17,7]}
	Suppose $m=3$ and $[a,b]=[17,7]$. Then the level of $[a,b]$ is 1. Since $\frac{17}{7}=2+\frac{3}{7}<2+\frac{1}{2}$, we get $N_1=2$ and $\bc_1$ of type $-$. One can check $\bc_1=(3,2,3,2,3,2,2)$. By Proposition \ref{lem:notreduced} or by definition of $s^{a \times b}$, we obtain
	 \begin{equation*}
	 \begin{split}
	 s^{a\times b}=&(23)^3(21)(23)^2(21)(23)^3(21)(23)^2(21)(23)^3(21)(23)^2(21)(23)^2(21)\\
	 =&213\,\underline{121}\,3\,\underline{121}\,3131=21321\,\underline{232}\,123131=21321323123131,
	\end{split}
	\end{equation*}
where the underlined subwords are replaced using relations $121=212$ and $232=323$.

	On the other hand, we have $\bc_2=(1,1,2)$, $N_2=1$ and $\bc_2$ of type $-$. It is clear that $\bc_3=(2)$.
	By Lemma \ref{lem:m=3 level1} (2), 
	\[ s^{a\times b}=21321(w_4)^123131=21321323123131. \]
Thus we get the same standard word.
	\end{example}

	\begin{proof}[Proof of Lemma \ref{lem:m=3 level1}]
Since \[ (31)^{a_{2,i}}(21) = (31)^{a_{2,i}-1}3121=(31)^{a_{2,i}-1}3212 \quad \text{ for } 1 \le i \le d_2,\] we obtain from \eqref{2131a21}
	\begin{align}	
	s^{a\times b}=&(21)(31)^{a_{2,1}}(21)(31)^{a_{2,2}}\cdots (21)(31)^{a_{2,d_2}} \nonumber \\
	=&(21)(31)^{a_{2,1}-1}(3212)(31)^{a_{2,2}-1}(3212)\cdots(31)^{a_{2,d_2-1}}(3212)(31)^{a_{2,d_2}} \nonumber \\
	=&w_1(w_2)^{a_{2,1}-1}w_3\cdots(w_2)^{a_{2,d_2-1}-1}w_3(w_2)^{a_{2,d_2}}.
	\label{eqn-sp}
	\end{align}
Suppose $N_2>1$. It implies that $a_{2,i}>1$ for all $1\leq i \leq d_2$, and the expression \eqref{eqn-sp} is standard.

Now suppose $N_2=1$.
	If $\bc_2$ is of type $+$, we have
\[ \bc_2 =( 1, \underbrace{2, \dots, 2}_{a_{3,1}\text{ times}}, 1, \underbrace{2, \dots, 2}_{a_{3,2}\text{ times}}, \dots, 1, \underbrace{2, \dots, 2}_{a_{3,d_3}\text{ times}}) \] by Lemma \ref{lem:a_{n,i}} (3). We start from \eqref{eqn-sp} to see that $s^{a \times b}$ is equal to
	\begin{equation*}
	\begin{split}
	&(21)(31)^{a_{2,1}-1}(3212)(31)^{a_{2,2}-1}(3212)\cdots(31)^{a_{2,d_2-1}-1}(3212)(31)^{a_{2,d_2}-1}(31)\\
	=&(21)(321231)^{a_{3,1}}(3212)(321231)^{a_{3,2}}(3212)\cdots (321231)^{a_{3,d_3}}(31)\\
	=&(213212)[(313212)^{a_{3,1}-1}(31321\underline{232}12)(313212)^{a_{3,2}-1}(31321\underline{232}12)\cdots(313212)^{a_{3,d_3}-1}](3131)\\
	=&(213212)[(313212)^{a_{3,1}-1}(31321{323}12)(313212)^{a_{3,2}-1}(31321{323}12)\cdots(313212)^{a_{3,d_3}-1}](3131)\\
	=&w_1 w_3(w_2w_3)^{a_{3,1}-1}w_6(w_2w_3)^{a_{3,2}-1}\cdots w_6(w_2w_3)^{a_{3,d_3}-1}(w_2)^2,
	\end{split}
	\end{equation*}
where the underlines are put to indicate the replacements $232=323$.
	The remaining cases can be proven similarly.
	\end{proof}
Recall that we set \[ x= (132)^{\lfloor \frac{L-2}2 \rfloor}1 \quad \text{ and } \quad x^{-1}= 1(231)^{\lfloor \frac{L-2}2 \rfloor}. \]
Now we present the remaining cases in Lemmas \ref{lem: m=3 odd L>1} and \ref{lem:m=3,4,5 even L>1} below. Since the proofs of these lemmas are similar to that of Lemma \ref{lem:m=3 level1}, we omit the proofs.

	\begin{lem} [$m=3$; odd $L\ge 3$] \label{lem: m=3 odd L>1}
	Assume that the level $L$ of $[a,b]$ is $\ge 3$ and odd. 
	Suppose $m=3$, $N_L=1$ and $\bc_L$ is of type $-$. 

(1) If $N_{L+1}\neq 1$, the $\mathcal S(3)$-standard word of $s^{a\times b}$ is equal to
		\begin{equation*}
	x31[{u_3}^{a_{L+1,1}-1}u_4\cdots {u_3}^{a_{L+1,d_{L+1}-1}-1}u_4]{u_3}^{a_{L+1,d_{L+1}}}23x^{-1}	,
\end{equation*}
	where $u_3=31$, $u_4=3212$ and the expression inside $[\ ]$ is void if $d_{L+1}=1$. 

(2) If $N_{L+1}= 1$, the $\mathcal S(3)$-standard word of $s^{a\times b}$ is equal to  
	\begin{equation*}
	\begin{cases}
	x3132{u_5}^{a_{L+2,1}-1}[u_6{u_5}^{a_{L+2,2}}\cdots u_6{u_5}^{a_{L+2,d_{L+2}}}]u_723x^{-1} &\text{for $\bc_{L+1}$ of type $-$,}\\
	x3132[{u_6}^{a_{L+2,1}}u_5{u_6}^{a_{L+2,2}}\cdots u_5]{u_6}^{a_{L+2,d_{L+2}}-1}u_723x^{-1} &\text{for $\bc_{L+1}$ of type $+$,}\\
	x3132u_723x^{-1} &\text{for $\bc_{L+1}$ of type $=$},
	\end{cases}
	\end{equation*}
	where $u_5=1323$, $u_6=123132$, $u_7=123131$ and the expression inside $[\ ]$ is void if $d_{L+2}=1$.
	\end{lem}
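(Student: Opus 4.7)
The plan is to deduce this exceptional case directly from Lemma~\ref{lem:m=3 level1} by conjugation, rather than repeating the combinatorial rewriting from scratch. Write $L = 2g+1$ and apply the last assertion of Corollary~\ref{cor-red} with $k = L$: it yields a reduced positive root $[\tilde a,\tilde b]$ of $\mathcal H(3)$ of level $L-L+1 = 1$, whose first canonical sequence is $\tilde{\bc}_1 = (a_{L,1}+1,\dots,a_{L,d_L}+1)$, such that
\[
(231)^g\, s^{a\times b}\,(132)^g = s^{\tilde a\times \tilde b},
\qquad\text{i.e.,}\qquad
s^{a\times b} = (132)^g\, s^{\tilde a\times \tilde b}\,(231)^g.
\]
Because the coordinatewise shift $+1$ preserves the block pattern of $\bc_L$ and its fractional parts $\rho_j$ for $j\ge L$, the sequence $\tilde{\bc}_1$ has the same type $-$ as $\bc_L$ and satisfies $\tilde N_1 = N_L + 1 = 2$, while $\tilde{\bc}_{j+1} = \bc_{L+j}$ and $\tilde N_{j+1} = N_{L+j}$ for all $j\geq 1$; in particular $\tilde N_2 = N_{L+1}$ and $\mathrm{type}(\tilde{\bc}_2) = \mathrm{type}(\bc_{L+1})$. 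This is precisely the exceptional level-$1$ configuration ($m=3$, $\tilde N_1 = 2$, $\tilde{\bc}_1$ of type $-$) handled by Lemma~\ref{lem:m=3 level1}, whose parts (1) and (2) correspond respectively to the cases $N_{L+1}\neq 1$ and $N_{L+1}=1$ of the present statement, with the three subcases of (2) matching the three types of $\bc_{L+1}$.

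Next I would conjugate back. From $s_1^2 = e$ and the definitions of $x = (132)^{g-1}\cdot 1$ and $x^{-1} = 1\cdot(231)^{g-1}$ one reads off immediately
\[
(132)^g = x\cdot 32 \qquad\text{and}\qquad (231)^g = 23\cdot x^{-1},
\]
so that $s^{a\times b} = x\cdot 32\cdot s^{\tilde a\times \tilde b}\cdot 23\cdot x^{-1}$. Every standard form supplied by Lemma~\ref{lem:m=3 level1} begins with $w_1 = 21$; applying $s_2^2 = e$, the boundary factor $32\cdot 21$ collapses to $31$, producing the desired prefix $x\cdot 31$ in part~(1) and, together with $w_3 = 3212$, the re-parsing $x\cdot 3132$ in part~(2). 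The remaining interior letters of the Lemma~\ref{lem:m=3 level1} expression stay in place in the free monoid, and the right-hand $\cdot 23\cdot x^{-1}$ contributes the stated suffix. The identifications $u_3 = w_2$, $u_4 = w_3$ in part~(1), and the re-groupings producing $u_5, u_6, u_7$ from $w_4, w_5, w_6$ together with the absorbed boundary letters in part~(2), are then routine to verify by direct concatenation.

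The main obstacle is confirming $\mathcal S(3)$-standardness after the boundary absorption. The interior obtained from Lemma~\ref{lem:m=3 level1} is already $\mathcal S(3)$-standard, so the only new checks are at the splice points. Since $x$ ends in $s_1$ and the first boundary block $31\cdots$ begins with $s_3$, the new triples on the left are of the form $\ldots s_2 s_1 s_3\ldots$ or $\ldots s_1 s_3 s_1\ldots$, none of which equals the forbidden leading term $s_1 s_2 s_1$ or $s_2 s_3 s_2$; symmetrically on the right. The full expression is therefore $\mathcal S(3)$-standard, completing the derivation of the formulas in (1) and (2).
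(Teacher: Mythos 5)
Your proposal is correct, and it organizes the argument differently from what the paper intends. The paper omits this proof entirely, saying only that it is ``similar to that of Lemma~\ref{lem:m=3 level1}'' --- i.e.\ one is expected to redo the $121=212$, $232=323$, $22=e$ rewriting on the conjugated word $(23)x^{-1}s^{a\times b}x(32)$ from the proof of Lemma~\ref{lem:generalandodd}(2). You perform the same conjugation (note $(23)x^{-1}=(231)^g$ and $x(32)=(132)^g$), but then invoke the last assertion of Corollary~\ref{cor-red} to identify the conjugate with $s^{\tilde a\times\tilde b}$ for a level-one root $[\tilde a,\tilde b]$ with $\tilde N_1=2$ and $\tilde\bc_1$ of type $-$, so Lemma~\ref{lem:m=3 level1} applies as a black box. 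This buys a shorter proof and explains structurally why the exceptional odd-$L$ formulas mirror the exceptional level-$1$ ones; the cost is that the identifications you call routine must actually be checked, and they do hold: $\tilde N_{j+1}=N_{L+j}$ and $\tilde\bc_{j+1}=\bc_{L+j}$ because the shift by $1$ leaves every $\rho_j$ unchanged; the collapse $32\cdot 21=31$ accounts for the exponent $a_{L+1,1}-1$ on the first $u_3$-block in (1); and in (2) the passage from $w_4,w_5,w_6$ to $u_5,u_6,u_7$ is the free-monoid re-bracketing $1(3231)^c=(1323)^c1$, $1\cdot 231321=123132\cdot 1$, $1\cdot 23131=123131$, together with $(313212)^c=3132(123132)^{c-1}12$ for type $+$. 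One point to state explicitly in the standardness check: the left splice not only creates junction triples but also changes the first letter of the interior from $2$ to $3$; since the forbidden subwords are only $11,22,33,121,232$ and the new triples are of the form $213$, $131$, $31{*}$, standardness is indeed preserved.
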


	\begin{lem} [$m=3,4,5$; even $L\ge 2$] \label{lem:m=3,4,5 even L>1}
	Assume that the level $L$ of $[a,b]$ is $\ge 2$ and even. Suppose $m=3,4,5$ and $N_L=m-2$. 
	Let  \[ v_3=31, \quad v_4=3231, \quad v_5=2321 \quad \text{ and }\quad v_6=323231 .\]
	Then the $\mathcal S(m)$-standard word of $s^{a\times b}$ is given by the following:
	\begin{equation*}
	\text{for $m=3$, \quad }
	\begin{cases}
	xv_4^{a_{L+1,1}}[v_3v_4^{a_{L+1,2}-1}\cdots v_3v_4^{a_{L+1,d_{L+1}}-1}]23x^{-1} &\text{if $\bc_L$ is of type $-$,}\\
	xv_423x^{-1} &\text{if $\bc_L$ is of type $=$;}
	\end{cases}
	\end{equation*}
	 \begin{equation*}
	\text{for $m=4$, \quad }
	\begin{cases}
	xv_5^{a_{L+1,1}}[v_4v_5^{a_{L+1,2}-1}\cdots v_4v_5^{a_{L+1,d_{L+1}}-1}]23x^{-1} &\text{if $\bc_L$ is of type $-$,}\\
	x v_5 23 x^{-1}  &\text{if $\bc_L$ is of type $=$,}\\
	y 1213 x^{-1} &\text{if $\bc_L$ is of type $0$;}
	\end{cases}
	\end{equation*}
	\begin{equation*}
	\text{for $m=5$, \quad }
	\begin{cases}
	xv_5^{a_{L+1,1}}[v_6v_5^{a_{L+1,2}-1}\cdots v_6v_5^{a_{L+1,d_{L+1}}-1}]23x^{-1} &\text{if $\bc_L$ is of type $-$,}\\
	xv_523x^{-1} &\text{if $\bc_L$ is of type $=$,}\\
	x2123x^{-1} &\text{if $\bc_L$ is of type $0$.}\\
	\end{cases}
	\end{equation*}
Here the expression inside $[\ ]$ is void if $d_{L+1}=1$.
	\end{lem}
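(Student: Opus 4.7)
The plan is to extend the argument of Lemma \ref{lem:generalandodd}(1) and to follow the same general template as Lemma \ref{lem:m=3 level1}, handling the additional $\mathcal{S}(m)$-reductions that are triggered only when $m\in\{3,4,5\}$ and $N_L=m-2$. Concretely, I would restart from the intermediate form
\[ x^{-1}s^{a\times b}x=(12)^{a_{L,1}}(13)\cdots(12)^{a_{L,d_L}}(13) \]
established in the proof of Lemma \ref{lem:generalandodd}(1) and use that under the present hypothesis each factor reduces to either $v_1:=23$ (when $a_{L,i}=m-1$) or $v_2:=2123$ (when $a_{L,i}=m-2$); these values coincide across $m\in\{3,4,5\}$. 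By Corollary \ref{cor-cannot} the sequence $\bc_L$ is of type $-$, $=$ or $0$ (with type $0$ excluded for $m=3$), while Lemma \ref{lem:a_{n,i}}(3) together with Proposition \ref{prop-Nk}(ii) fixes $a_{L,1}=m-1$ and $a_{L,d_L}=m-2$. Thus the interior word has the explicit shape $v_1 v_2^{a_{L+1,1}}v_1 v_2^{a_{L+1,2}}\cdots v_1 v_2^{a_{L+1,d_{L+1}}}$ in the type $-$ case, $v_1 v_2$ in type $=$, and a single $v_1$ or $v_2$ in type $0$.

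The heart of the proof is the case-by-case rewriting using the $\mathcal{S}(m)$-relations that are \emph{not} fired in the general argument, namely $232\to 323$ in $\mathcal{S}(3)$, $2323\to 3232$ in $\mathcal{S}(4)$, and $23232\to 32323$ in $\mathcal{S}(5)$. These left-sides do appear in our interior word: $v_1 v_2=232123$ contains $232$; the junction $v_2 v_1=212323$ contains $2323$; and the deeper junction $v_2 v_1 v_2=2123232123$ contains $23232$. For each $m\in\{3,4,5\}$ and each type of $\bc_L$ I would carry out the rewriting block by block, invoking also the cancellations $22=e$ and $33=e$ that $2323\to 3232$ triggers for $m=4$, and collect the resulting aggregates into the alphabet $\{v_3,v_4,v_5,v_6\}$ introduced in the statement. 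By confluence of the Gr\"obner--Shirshov rewriting the reduced word is independent of the order of rewriting, so it suffices to exhibit a single reduction to the stated form in each sub-case.

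The final step is conjugation by $x=(132)^{\lfloor(L-2)/2\rfloor}1$ and the verification that the full word is $\mathcal{S}(m)$-standard, and this verification is the main obstacle. Three local patterns require direct inspection: (a) the junction between $x$ and the first block, where the last letter $1$ of $x$ meets the first letter $2$ or $3$ of the interior and could in principle produce $121$ or $232$; (b) the symmetric junction between the last block and $x^{-1}$; and (c) for $m=4$ the longer mixed relation $1213232\to 2121323$ potentially activated inside the interior. Each of these is resolved by a finite inspection of the possible pairings in the block alphabet: every $v_i$ begins with $2$ or $3$ and ends with $1$ or $3$, so the alternating letters $1,3,2,\ldots$ of $x$ splice in without yielding any left-side of length at most $3$, and for $m=4$ a listing of the consecutive pairs $v_4 v_4$, $v_4 v_5$, $v_5 v_4$, $v_5 v_5$ shows that $1213232$ never occurs as a consecutive sub-word. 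Once these checks are complete, the stated expressions are confirmed to be the $\mathcal{S}(m)$-standard forms of $s^{a\times b}$, with the case analysis splitting into the two or three sub-types for each $m\in\{3,4,5\}$ in exact parallel to the structure of the proof of Lemma \ref{lem:m=3 level1}.
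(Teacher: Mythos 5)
Your approach is the right one and is, in substance, the argument the paper has in mind: the paper omits this proof, deferring to the method of Lemma \ref{lem:m=3 level1}, and your plan (restart from $x^{-1}s^{a\times b}x=(12)^{a_{L,1}}(13)\cdots(12)^{a_{L,d_L}}(13)$, reduce each factor to $23$ or $2123$, rewrite the block junctions with $232\to 323$, $2323\to 3232$, $23232\to 32323$ plus $22=e$, $33=e$, regroup into $v_3,\dots,v_6$, and invoke uniqueness of the standard word) does carry through in every sub-case. Two specific claims in your write-up need correction, though. First, in the type $0$ case the single factor is always $2123$, never $23$: $d_L=1$ forces $\rho_L=0$ by Lemma \ref{lem:a_{n,i}}(1), hence $a_{L,1}=N_L=m-2$. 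This matters because the stated answers for type $0$ are exactly the reductions of $x\,2123\,x^{-1}$; if the factor could be $23$ you would get a different word.

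Second, your blanket resolution of junction (a) --- that the trailing letters of $x$ ``splice in without yielding any left-side'' --- is false in precisely one sub-case, and it is the sub-case whose answer has a different shape. For $m=4$ and $\bc_L$ of type $0$ the interior begins with $2123$, so $x\,2123\,\cdots=(132)^{g}1\cdot 2123\cdots$ contains the left-side $1212$; reducing $1212\to 2121$ and then cancelling the resulting $22$ against the final letter $2$ of $(132)^{g}$ is exactly what converts the prefix $x=(132)^{g}1$ into $y=(132)^{g-1}13$ and produces the stated answer $y\,1213\,x^{-1}$ (for $L=2$ this is the $\hat 2$ convention). As written, your junction analysis would never generate the $y$-prefix, so it cannot account for that line of the lemma. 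In all other sub-cases the interior begins with $23$ (types $-$ and $=$) or with $2123$ but $m=5$ (where the shortest mixed left-side is $12121$), and your claim is correct; once the $m=4$ type-$0$ junction is treated separately, the proof is complete.
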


	\begin{remark}
	By Corollary \ref{cor-cannot} (1) the sequences $\bc_L$ cannot be of type $+$ in Lemma \ref{lem:m=3,4,5 even L>1}. Moreover, when $m=3$, the type of $\bc_{L}$ cannot be $0$ by Corollary \ref{cor-cannot} (2). Thus all the possible cases are covered.
\end{remark}

\begin{example} \label{exa-[13,5]} 
    (1)	 Suppose $m=3$ and $[a,b]=[13,5]$. Since $\bc_1=(3,3,2,3,2)$ and $\bc_2=(2,1)$, we obtain from Proposition \ref{lem:notreduced}
\begin{equation*}
	  \begin{split}
	  s^{a\times b}=&(23)^3(21)(23)^3(21)(23)^2(21)(23)^3(21)(23)^2(21)\\
	  =&(21)^2(31)(21)(31)=12321231=13231231,
	  \end{split}
	  \end{equation*}
where we use the relations $121=212$, $232=323$ and $22=e$.
 On the other hand, $L=2$, $N_2=1$, $\rho_2= \frac 1 2$ and $\bc_2$ is of type $=$. By Lemma \ref{lem:m=3,4,5 even L>1}, we have 
\[ s^{a \times b} = x v_4 23 x^{-1}= 13231231 ,\] which is the same standard word.

    (2) Let $m=4$ and $[a,b]=[85,23]$. Then \begin{align*} \bc_1&=(
    4, 4, 4, 3, 4, 4, 3, 4, 4, 3, 4, 4, 4, 3, 4, 4, 3, 4, 4, 3, 4, 4, 3),\\ \bc_2&=(3, 2, 2, 3, 2, 2, 2), \quad \bc_3=(2, 3), \end{align*}
    and $N_1=3, N_2=2, \rho_1=\frac {16}{23}, \rho_2=\frac 2 7$. The level $L$ is equal to $2$ and the sequence $\bc_2$ is of type $-$. By Lemma \ref{lem:m=3,4,5 even L>1}, we obtain
    \begin{equation} \label{st-5716} s^{a \times b} = x v_5^2 v_4 v_5^2 23 x^{-1} = 1 (2321)^2 (3231) (2321)^2 231 .\end{equation}  One can check that the initial word of $s^{a \times b}$ indeed reduces to the standard word in \eqref{st-5716}.
\end{example}
	 
\subsection{Proof of Theorem \ref{thm-main}} \label{sub-pm}
\begin{proof}
Suppose $[a,b] \neq [c,d]$ with $a \ge b$ and $c \ge d$. If $[a,b]$ and $[c,d]$ are both real, then $s^{a \times b} \neq s^{c \times d}$ by Lemma \ref{lem-real}. 
Suppose that $[a,b]$ and $[c,d]$ are both imaginary. We will show that $s^{a\times b} \neq s^{c\times d}$ by comparing $\mathcal{S}(m)$-standard words given in Lemmas \ref{lem:1standodd}, \ref{lem:generalandodd}, \ref{lem:m=3 level1}, \ref{lem: m=3 odd L>1} and \ref{lem:m=3,4,5 even L>1}. 

Let the levels of $[a,b]$ and $[c,d]$ be $L$ and $L'$, respectively. 
Without loss of generality, we may assume $L \le L'$. Write $g=\lceil \frac {L-2} 2 \rceil$. It is enough to show $(231)^g s^{a \times b} (132)^g \neq (231)^g s^{c \times b} (132)^g$. By Corollary \ref{cor-red} (2), an expression of $(231)^g s^{a \times b} (132)^g$ is equal to $s^{\tilde a \times \tilde b}$ where $[\tilde a , \tilde b]$ has level $L-2g = 1$ or $2$.
Similarly, $(231)^g s^{c \times d} (132)^g$ is equal to $s^{\tilde c \times \tilde d}$ where $[\tilde c , \tilde d]$ has level $L'-2g$. 
Consequently, we may assume that $[a,b]$ has level $L=1$ or $2$ and $[c,d]$ has level $L' \ge L$, and it is sufficient to prove $s^{a \times b } \neq s^{c \times d}$.

We summarize consequences of Lemmas \ref{lem:1standodd}, \ref{lem:generalandodd}, \ref{lem:m=3 level1}, \ref{lem: m=3 odd L>1} and \ref{lem:m=3,4,5 even L>1} in what follows. 
 Let $[e,f]$ be an arbitrary imaginary positive reduced root of level $L''$.
\begin{itemize}
\item If $L''=1$, the standard word of $s^{e \times f}$ starts with 
one of 
\begin{equation} \label{stw-1}  2131, \quad 2132,\quad 2321,  \quad 2323,  \quad 31,\quad 3231, \quad 3232, \end{equation} as one can see from  Lemmas \ref{lem:1standodd} and \ref{lem:m=3 level1}.

\item If $L''=2$, the standard word of $s^{e \times f}$ starts with
one of \begin{equation} \label{stw-2} 12121, \quad 12123, \quad 12321, \quad 13231 , \quad 21212, \quad 21213\end{equation}  from  Lemmas \ref{lem:generalandodd} and \ref{lem:m=3,4,5 even L>1}.
Note that $2131$ cannot occur as having $\bc_2$ of type $0$ and $N_2=1$ is impossible by Corollary \ref{cor-cannot} (2). Likewise $12323$ cannot occur because having $\bc_2$ of type $+$ and $N_L=m-2$ is impossible by Corollary \ref{cor-cannot} (1).  

\item If $L''=2g+1 \ge 3$, the standard word of $s^{e \times f}$ starts with one of 
\begin{equation} \label{stw-3}  x2323,  \quad x 31, \quad x 3231, \quad x 3232 \end{equation} from
 Lemmas \ref{lem:generalandodd} and \ref{lem: m=3 odd L>1}, where $x= (132)^{\lfloor \frac{L''-2}2 \rfloor}1$. Note that $ x2321$ cannot occur since having $N_L=1$ and $\bc_L$ of type $0$ is impossible by Corollary \ref{cor-cannot} (2).  

\item If $L''=2g+2 \ge 4$, the standard word of $s^{e \times f}$ starts with one of 
\begin{equation} \label{stw-4}  y 1212, \quad y 1213,  \quad y 13, \quad  y 2121, \quad y 2123, \quad x2121, \quad x 2123,  \quad x 2321, \quad x 3231, \end{equation} from 
 Lemmas \ref{lem:generalandodd} and \ref{lem:m=3,4,5 even L>1}, where $x= (132)^{\lfloor \frac{L''-2}2 \rfloor}1$ and $y=(132)^{\frac{L''-4}2}13$. 
\end{itemize}

 First assume $L < L'$. If $L=1$, then none of the words in \eqref{stw-1} appears as a starting word in \eqref{stw-2}, \eqref{stw-3} and \eqref{stw-4}. Thus the standard word of $s^{a\times b}$ must be different from that of $s^{c \times d}$, and hence $s^{a \times b} \neq s^{c \times d}$. If $L=2$ and $L' \ge 4$, then
none of the words in \eqref{stw-2} appears as a starting word in \eqref{stw-3} and \eqref{stw-4}, and we obtain $s^{a\times b} \neq s^{c \times d}$. If $L=2$ and $L'=3$, then
$13231$ is common in \eqref{stw-2} and \eqref{stw-3}. However, if $L=2$, a standard word of $s^{a\times b}$ starts with $13231$ only when $m=3$; now, if $L'=3$ and $m=3$, no standard word actually starts with $13231$ by Lemmas \ref{lem:generalandodd} and \ref{lem: m=3 odd L>1}.
Thus the standard word $s^{a \times b}$ is different from that $s^{c \times d}$ in this case, and we have $s^{a \times b} \neq s^{c \times d}$. 

Next assume that $L=L'=1$. Let $N_k$, $\bc_k$ and $\epsilon_k$ be defined for $[a,b]$ as in Definition \ref{def:a_n}, where $\epsilon_k$ denotes the type of $\bc_k$, and use notations $N'_k$, $\bc'_k$ and $\epsilon'_k$ for $[c,d]$. One can check that the standard words in Lemmas \ref{lem:1standodd} and \ref{lem:m=3 level1} are all different for each $m$.  If $m \neq 3$, the standard words of $s^{a\times b}$ and $s^{c \times d}$ are determined by $(N_1,\epsilon_1, \bc_2)$ and $(N'_1, \epsilon'_1, \bc'_2)$ respectively by Lemma \ref{lem:1standodd}. Since $[a,b] \neq [c,d]$, we have $(N_1, \epsilon_1, \bc_2) \neq (N'_1,\epsilon'_1, \bc'_2)$  by Lemma \ref{lem:ncninj} and 
 the corresponding standard words are different. Thus $s^{a \times b} \neq s^{c \times d}$. 
If $m =3$, the standard words are determined either by $(N_1, \epsilon_1, \bc_2) $ and $(N'_1, \epsilon'_1, \bc'_2)$, or by $(N_1, \epsilon_1, N_2, \epsilon_2, \bc_3) $ and $(N'_1, \epsilon'_1,N'_2, \epsilon'_2, \bc'_3)$. Since $[a,b] \neq [c,d]$, we have $s^{a \times b} \neq s^{c \times d}$ by Lemmas \ref{lem:ncninj}, \ref{lem:1standodd} and \ref{lem:m=3 level1}.

Finally assume that $L=L'=2$. Similarly, as in the case that $L=L'=1$, one can check that the standard words in Lemmas \ref{lem:generalandodd} (1) and \ref{lem:m=3,4,5 even L>1} are all different for each $m$.   The standard words of $s^{a\times b}$ and $s^{c \times d}$ are determined by $(N_2, \epsilon_2,\bc_3)$ and $(N'_2, \epsilon'_2,\bc'_3)$ respectively, and note that $N_1=N'_1=m-1$ and $\epsilon_1=\epsilon'_1=+$. Since $[a,b] \neq [c,d]$, we have $(N_2, \epsilon_2, \bc_3) \neq (N'_2, \epsilon'_2,\bc'_3)$  by Lemma \ref{lem:ncninj} and 
 the corresponding standard words are different. Thus $s^{a \times b} \neq s^{c \times d}$.

Now suppose that $[a,b]$ is real and $[c,d]$ is imaginary. Then the $\mathcal S(m)$-standard expression of $s^{a \times b}$ is given by Lemma  \ref{sFnFn1} (2) and Lemma \ref{lem-real}, and can be written as
\begin{equation} \label{eqn-rr} 21, \qquad  (132)^l 1 (231)^{l+1} \qquad \text{ or } \qquad (132)^l 131 (231)^{l+1} \quad \text{ for some }l \ge 0 .\end{equation}
Comparing \eqref{eqn-rr} with \eqref{stw-1}, \eqref{stw-2}, \eqref{stw-3} and \eqref{stw-4}, we see that only possibilities occur when $s^{c \times d}$ starts with $x31$ or $y2123$. Further, we compare \eqref{eqn-rr} with the standard words starting with $x31$ or $y2123$ in Lemmas  \ref{lem:generalandodd}, \ref{lem: m=3 odd L>1} and \ref{lem:m=3,4,5 even L>1} and see that $s^{a \times b} \neq s^{c \times d}$ in all the possibilities. This completes the proof. 
\end{proof}

\subsection{Proof of Theorem \ref{thm-in}} \label{sub-in}
\begin{proof}
By Theorem 1.2 in \cite{LL1}, the map $[a,b] \mapsto s([a,b])$ is a surjection from the set of reduced positive roots of $\mathcal H(m)$ onto  the set of rigid reflections of $W(m)$. Thus we have only to prove that the map is an injection.

Suppose that $[a,b]$ and $[c,d]$ are two distinct reduced positive roots of $\mathcal H(m)$. If $a \ge b$ and $c \ge d$, then $s^{a \times b} \neq s^{c \times d}$ by Theorem \ref{thm-main}. It follows from Lemma \ref{sFnFn1} (1) that $s([a,b]) \neq s([c,d])$. 
If $a < b$ and $c <d$, then we also have $s([a,b]) \neq s([c,d])$ since interchanging roles of $1$ and $3$ yields a symmetry to cover this case. If $a \ge b$ and $c <d$, or if $a<b$ and $c \ge d$,  then $s([a,b]) \neq s([c,d])$ by Corollary \ref{cor-agb}. This completes the proof. 
\end{proof}

\medskip

\section{Gr\"obner--Shirshov basis for $W(m)$} \label{GS}

In this section we determine \GS bases for $W(m)$, which are used in the previous sections.


\subsection{\GS basis theory}
We briefly recall the \GS basis theory (or Diamond Lemma). See \cite{Be,Bo,Bo-Sh,KL,KL1} for more details. 
Let $X=\{ x_1, x_2, \cdots \}$ be an alphabet
and let $X^*$ be the free monoid of associative monomials on $X$. We denote the empty monomial by $e$
and the {\em length} of a monomial $u$ by $l(u)$. Thus we have $l(e)=0$.
A well-ordering $\prec$ on $X^*$ is called a
{\it monomial order} if $x \prec y$ implies $axb \prec ayb$ for all $a, b \in X^*$. For two monomials 
$$u=x_{i_1} x_{i_2} \cdots x_{i_k}, \quad
v=x_{j_1} x_{j_2} \cdots x_{j_l} \in X^*,$$
define $u \prec_{\text{deg-lex}} v$
if and only if $k<l$ or $k=l$ and $i_r > j_r$ for the first $r$
such that $i_r \neq j_r$; it is a monomial order on $X^*$ called
the {\it degree lexicographic order}.
We denote the degree lexicographic order on $X^*$ simply by $\prec$.
In particular, we have $x_1 \succ x_2 \succ \dots $.

Let $\mathcal{A}_{X}$ be
the free associative algebra generated by $X$ over a field
$\mathbb{F}$. Given a nonzero element $p \in \mathcal{A}_{X}$, we
denote by $\overline{p}$ the maximal monomial appearing in $p$
under the ordering $\prec$. Thus $p = \alpha \overline{p} + \sum
\beta _i w_i $ with $\alpha , \beta _i \in \mathbb{F}$, $ w_i \in
X^*$, $\alpha \neq 0$ and $w_i \prec \overline{p}$. If $\alpha
=1$, $p$ is said to be {\it monic}.

Let $S$ be a subset of monic elements of $\mathcal{A}_X$, let $J$ be the two-sided ideal of
$\mathcal{A}_{X}$ generated by $S$. Then we say that the algebra $A= \mathcal{A}_X /J$
is {\it defined by $S$}. The images of $p \in \mathcal{A}_X$ in $A$ will also be denoted by $p$.

\begin{definition} \label{def-standard} Given a subset $S$ of monic elements of $\mathcal{A}_X$, a monomial $u \in X^*$ is said to be
{\it $S$-standard} if $u \neq a\overline{s}b$ for any $s \in S$ and $a, b
\in X^*$. Otherwise, the monomial $u$ is said to be {\it $S$-reducible}. \end{definition}

Through inductive steps, every $p \in \mathcal{A}_X$ can be expressed as
\begin{equation} \label{equ-1}
p = \sum \alpha_i a_is_ib_i +  \sum \gamma_k u_k,
\end{equation}
where $\alpha_i, \gamma_k \in \mathbb{F}$, $a_i, b_i,  u_k \in X^*$, $s_i \in S$, $a_i \overline{s_i} b_i \preceq \overline{p}$, $u_k \preceq \overline{p}$ and
$u_k$ are $S$-standard.
The term $\sum \gamma_k u_k$ in the expression (\ref{equ-1}) is
called a {\it standard} (or {\em normal}) form of $p$ with respect to the pair $S$ (and with respect to the monomial order $\prec$). In general,
a standard word is not unique. Nonetheless, it is clear that the set of $S$-standard monomials linearly spans the algebra $A$ defined by $S$.

\begin{definition} A subset $S$ of monic elements of $\mathcal{A}_X$ is a {\it \GS basis} if the set of $S$-standard monomials forms a linear basis of the algebra $A$ defined by $S$.
\end{definition}

Let $p$ and $q$ be monic elements of $\mathcal{A}_{X}$ with leading terms $\overline{p}$ and $\overline{q}$. We define
the {\it compositions} of $p$ and $q$ as follows.

\begin{definition}
($a$) If there exist $a, b, w \in X^*$ such that $\overline{p}a = b\overline{q} = w$ with $l(\overline{p}) > l(b)$,
then the {\bf \it composition of intersection} of $p$ and $q$ with respect to $w$ is defined to be $(p,q)_w = pa -bq$.

($b$) If there exist $a, b, w \in X^*$ such that $b \neq e$, $\overline{p}=a\overline{q}b=w$, then the {\bf \it
composition of inclusion}  of $p$ and $q$ with respect to $w$ is defined to be $(p,q)_w = p - aqb$.
\end{definition}

For $p, q \in \mathcal{A}_{X}$ and $w \in X^*$, we define a {\it
congruence relation} on $\mathcal{A}_{X}$ as follows: $p \equiv q
\mod (S; w)$ if and only if $p -q = \sum \alpha_i a_i s_i b_i$, where $\alpha_i \in \mathbb{F}$,
$a_i, b_i \in X^*$, $s_i \in S$, and $a_i
\overline{s_i} b_i \prec w$.

\begin{definition}
A subset $S$ of monic elements in $\mathcal{A}_X$ is said to be {\it closed under composition} if
$(p,q)_w \equiv 0 \mod (S;w)$ for all $p,q \in S$ and for any $w \in X^*$ whenever the composition $(p,q)_w$ is defined.
\end{definition}

The following is Shirshov's Composition Lemma.

\begin{lem} [\cite{Bo}] 
Let $S$ be a subset of monic elements of $\mathcal{A}_{X}$, and
let $A=\mathcal{A}_{X}/J$ be the associative algebra defined by $S$. Assume that $S$ is closed under composition. If the image of $p \in \mathcal{A}_X$ is trivial in $A$, then the word
$\overline{p}$ is $S$-reducible.
\end{lem}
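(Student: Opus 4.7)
The plan is to prove the contrapositive formulation directly: assuming $p$ lies in the two-sided ideal $J$ generated by $S$, I will show that $\overline{p}$ must factor as $a\overline{s}b$ for some $s \in S$ and $a, b \in X^*$. Since $p \in J$, I can write $p = \sum_i \alpha_i a_i s_i b_i$ with finitely many $\alpha_i \in \mathbb{F}^\times$, $s_i \in S$, and $a_i, b_i \in X^*$. Among all such presentations, I choose one that minimizes $w := \max_i (a_i \overline{s_i} b_i)$ in the well-order $\prec$, and then among those presentations minimizes the number $N$ of indices $i$ with $a_i \overline{s_i} b_i = w$.

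The goal is to show $\overline{p} = w$, for then $\overline{p}$ is $S$-reducible by definition. Suppose instead that $\overline{p} \prec w$. Then the coefficient of $w$ in $\sum_i \alpha_i a_i \overline{s_i} b_i$ must vanish, so $N \ge 2$; pick two indices $i, j$ with $a_i \overline{s_i} b_i = a_j \overline{s_j} b_j = w$. Three cases occur according to how $\overline{s_i}$ and $\overline{s_j}$ are positioned inside $w$: (a) the two subwords are disjoint, (b) they overlap in a proper nonempty suffix/prefix, or (c) one contains the other. In case (a), using $s_k = \overline{s_k} - (\overline{s_k} - s_k)$ I can swap the two factorizations modulo terms with leading word $\prec w$, so that the combined leading-$w$ contribution is reduced by one. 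In cases (b) and (c), the positions of $\overline{s_i}, \overline{s_j}$ inside $w$ correspond exactly to a composition of intersection or of inclusion $(s_i, s_j)_v$ at a certain $v \preceq w$; since $S$ is closed under composition, this composition equals $\sum_k \beta_k c_k s'_k d_k$ with every $c_k \overline{s'_k} d_k \prec v$. Transplanting this identity into the presentation of $p$ replaces the combination $\alpha_i a_i s_i b_i + \alpha_j a_j s_j b_j$ by terms whose leading words are all strictly less than $w$, again lowering $N$ (or $w$ itself).

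In every case the new presentation contradicts the minimality of the pair $(w, N)$, so the assumption $\overline{p} \prec w$ is untenable. Hence $\overline{p} = w = a_i \overline{s_i} b_i$ for some $i$, proving $\overline{p}$ is $S$-reducible. The main obstacle will be the bookkeeping in cases (b) and (c): one must factor $a_i, b_i, a_j, b_j$ precisely around the shared occurrence of $\overline{s_i}$ and $\overline{s_j}$ inside $w$ and track how the congruence $\equiv \mod (S; v)$ lifts to $\equiv \mod (S; w)$ after multiplication by the appropriate outer words. Once this bookkeeping is in place, each reduction step is an immediate application of the closed-under-composition hypothesis together with the fact that $\prec$ is compatible with left and right multiplication.
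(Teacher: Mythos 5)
The paper offers no proof of this lemma: it is quoted verbatim from Bokut \cite{Bo} (the statement is Shirshov's Composition Lemma, the associative-algebra form of Bergman's Diamond Lemma \cite{Be}), so there is no internal argument to compare yours against. What you have written is the standard proof from that literature, and it is correct in all essentials: pick a presentation $p=\sum_i\alpha_i a_i s_i b_i$ minimizing first $w=\max_i a_i\overline{s_i}b_i$ (which exists because $\prec$ is a well-order) and then the number $N$ of terms attaining $w$; if $\overline{p}\prec w$ the leading coefficients must cancel, forcing $N\ge 2$; and the two occurrences of $\overline{s_i},\overline{s_j}$ inside $w$ are either disjoint, overlapping, or nested. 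In the disjoint case the telescoping identity $a_is_iu\overline{s_j}b_j-a_i\overline{s_i}us_jb_j=a_is_iur_jb_j-a_ir_ius_jb_j$ with $r_k=\overline{s_k}-s_k$ replaces the pair by terms with leading words $\prec w$; in the other two cases $a_is_ib_i-a_js_jb_j=a\,(s_i,s_j)_v\,b$ for the appropriate ambiguity $v$ with $w=avb$, and closure under composition together with compatibility of $\prec$ with two-sided multiplication pushes everything below $w$. Either outcome contradicts the minimality of $(w,N)$, so $\overline{p}=w$ is $S$-reducible. Two small points to make the bookkeeping airtight: the case where $\overline{s_j}$ is a suffix (or equal to) $\overline{s_i}$ is already covered by the paper's composition of intersection with $a=e$, so the trichotomy really is exhaustive under the stated definitions; and after a rewriting step the new presentation may still have maximal word $w$ but with strictly smaller $N$, or a strictly smaller maximal word --- either violates the lexicographic minimality of $(w,N)$, which is exactly the contradiction you invoke.
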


As a consequence, we obtain:

\begin{thm} [\cite{Be,Bo}] \label{cor-1}
Let $\mathscr S$ be a subset of monic elements in $\mathcal{A}_{X}$. Then the following 
are equivalent {\rm :}
\begin{enumerate}
\item [({\it a})] $\mathscr S$ is a \GS  basis;
\item [({\it b})] $\mathscr S$ is closed under composition;
\item [({\it c})] For each $p \in \mathcal{A}_X$, the standard word of $p$ is unique.
\end{enumerate}
\end{thm}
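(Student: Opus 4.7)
The plan is to establish the cyclic implications $(b) \Rightarrow (a) \Rightarrow (c) \Rightarrow (b)$, leveraging Shirshov's Composition Lemma already recalled just above. The reduction procedure, which rewrites any element of $\mathcal{A}_X$ as a finite $\mathbb{F}$-linear combination of $\mathscr S$-standard monomials, will be used throughout, together with the fact that $\prec$ is a well-ordering so reduction terminates.

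For $(b) \Rightarrow (a)$, the $\mathscr S$-standard monomials span $A = \mathcal{A}_X/J$ by reduction, so only linear independence is at issue. Suppose $p = \sum \gamma_k u_k$ is a nontrivial $\mathbb{F}$-linear combination of $\mathscr S$-standard monomials whose image in $A$ is zero. Then $p \in J$, and the Composition Lemma (applicable under (b)) forces $\overline{p}$ to be $\mathscr S$-reducible. But $\overline{p}$ equals the largest $u_k$ with $\gamma_k \neq 0$, which is $\mathscr S$-standard by assumption --- a contradiction. The implication $(a) \Rightarrow (c)$ is immediate: two standard forms of the same $p$ represent the same element of $A$, which has the images of $\mathscr S$-standard monomials as a basis, so the coefficients must agree term-by-term.

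The subtlest step is $(c) \Rightarrow (b)$. Given $p, q \in \mathscr S$ with composition $(p,q)_w$ defined, I would first observe that, by the very construction of composition of intersection or inclusion, the two leading terms cancel so that $\overline{(p,q)_w} \prec w$. Running iterated reduction then expresses
\[
(p,q)_w = \sum_i \alpha_i\, a_i s_i b_i + r,
\]
with each $a_i \overline{s_i} b_i \preceq \overline{(p,q)_w} \prec w$ and $r$ a standard form. Since $(p,q)_w$ lies in $J$, its image in $A$ is zero; by uniqueness of the standard form (applied to the zero element of $A$, whose standard form is $0$) we must have $r = 0$. The remaining identity is precisely $(p,q)_w \equiv 0 \pmod{(\mathscr S; w)}$, which is closure under composition.

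The main obstacle is the bookkeeping in $(c) \Rightarrow (b)$: one must verify carefully that the cancellation of leading terms built into the definitions of compositions of intersection and inclusion strictly lowers the leading monomial below $w$, so that every subsequent reduction step also takes place strictly below $w$. Once that is secured, the remaining content is essentially formal, and the cycle of implications closes.
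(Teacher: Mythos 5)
The paper itself offers no proof of Theorem \ref{cor-1}: it is quoted from \cite{Be,Bo} as a standard consequence of Shirshov's Composition Lemma, so your proposal can only be judged on its own terms. The cycle $(b)\Rightarrow(a)\Rightarrow(c)\Rightarrow(b)$ is indeed the classical route, and your arguments for $(b)\Rightarrow(a)$ (spanning is automatic; independence holds because a nonzero $p=\sum\gamma_k u_k\in J$ with all $u_k$ standard would have $\overline{p}$ equal to its largest standard monomial, contradicting the Composition Lemma) and for $(a)\Rightarrow(c)$ are correct.

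There is, however, a genuine gap in $(c)\Rightarrow(b)$, at the words ``by uniqueness of the standard form (applied to the zero element of $A$, whose standard form is $0$)''. Condition $(c)$ asserts uniqueness of the standard form of each element of the free algebra $\mathcal{A}_X$, not of each element of the quotient $A$. The composition $(p,q)_w$ is a nonzero element of $\mathcal{A}_X$ lying in $J$; the inference ``its image in $A$ is zero, hence its standard form is $0$'' presupposes that the standard monomials are linearly independent in $A$, i.e.\ it presupposes $(a)$, which is exactly what the cycle is meant to deliver. Nor can you exhibit $0$ directly as a standard form of $(p,q)_w$ from the expression $pa-bq$, since the definition of a standard form requires each term $a_i\overline{s_i}b_i$ to be $\preceq\overline{(p,q)_w}$, whereas here $\overline{p}a=b\overline{q}=w\succ\overline{(p,q)_w}$. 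The repair is short: apply $(c)$ to the element $pa$ (resp.\ to $p$ in the inclusion case). On one hand $pa=p\cdot a$ with $p\in\mathscr S$ and $\overline{p}a=w=\overline{pa}$, so $0$ is a standard form of $pa$; on the other hand $pa=bq+\sum_i\alpha_i a_i s_i b_i+r$ with $b\overline{q}=w=\overline{pa}$ and all remaining leading words $\prec w$, so $r$ is also a standard form of $pa$. Uniqueness then gives $r=0$, hence $(p,q)_w\equiv 0 \bmod (\mathscr S;w)$ as desired. With this one correction your proof closes.
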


\subsection{Coxeter groups} Consider a Coxeter group
$$
W=\langle s_1,s_2,...,s_n \ : \   s_1^2=\cdots=s_n^2=e, \ (s_is_j)^{m_{ij}}=e \ (i \neq j) \rangle,
$$ where $m_{ij}\in\{2,3,4,...\} \cup \{ \infty \}$. 
Let $X=\{ s_1, s_2, \dots , s_n \}$. Then $X^*$ has the degree lexicographic order $\prec$ defined in the previous subsection. In particular, we have 
\[ s_1 \succ s_2 \succ \cdots \succ s_n. \]
Let $S$ be the set of relations:
\begin{align*}
s_i^2&-e &&\text{for } i=1,2, \dots , n,\\
(s_is_j)^{m_{ij}/2} &- (s_js_i)^{m_{ij}/2} && \text{if $s_i \succ s_j$ and $m_{ij}$ is even},\\
(s_is_j)^{\lfloor m_{ij}/2 \rfloor}s_i &- (s_js_i)^{\lfloor m_{ij}/2 \rfloor}s_j && \text{if $s_i \succ s_j$ and $m_{ij}$ is odd}. 
\end{align*}
As in the previous subsection, let $J$ be the ideal of $\mathcal A_X$ generated by $S$, and $A=\mathcal A_X/J$ be the algebra defined by $S$. Then $A$ is nothing but the group algebra $\mathbb F[W]$ of $W$. 

If $S$ is not a \GS basis, we extend $S$ by putting all nontrivial compositions into $S$, and denote the resulting set of relations by $S^{(1)}$. If it is a \GS basis, we stop; otherwise, we extend $S^{(1)}$ in the same way to obtain $S^{(2)}$ and continue the process. If this process terminates at $S^{(N)}$ for some $N$, we obtain a \GS basis $\mathscr S:=S^{(N)}$ for $A=\mathbb F[W]$. 
By abusing language, we also call 
 $\mathscr S$  a {\em \GS basis} for $W$.

It follows from the construction that every element in $\mathscr S$ is of the form $u-v$ with $u, v \in X^*$, and the identity $u=v$ is valid in the group $W$.  Consequently, an element in $w \in W$ can be written uniquely into an $\mathscr S$-standard monomial using the identities $u=v$ in $W$ for $u-v \in \mathscr S$. 

\subsection{The groups $W(m)$}
In this subsection, for each $m \ge 3$, we will compute a \GS basis for $W(m)$. We need to separate two cases according to the parity of $m$. In the proofs, we simply write $u=v$ for $u \equiv v \mod (S'; w)$ where $S'$ and $w$ are clear from the context.

{(i) Assume that $m=2k-1$, $k \ge 2$.}
Let $X=\{ s_1, s_2, s_3 \}$. The set $S$ of the defining relations are given by
\begin{align}
s_1^2&-e, \label{eqn-ss1} \\s_2^2&-e, \label{eqn-ss2} \\s_3^2&-e, \label{eqn-ss3} \\
(s_1s_2)^{k-1}s_1 &-(s_2 s_1)^{k-1}s_2, \label{eqn-ss2ss1} \\
(s_2s_3)^{k-1}s_2 &-(s_3 s_2)^{k-1}s_3.  \label{eqn-ss3ss2}
\end{align}

\begin{prop}\label{prop:s(m)odd}
Let $m=2k-1$ for $k \ge 2$. The set $S$ of defining relations \eqref{eqn-ss1}-\eqref{eqn-ss3ss2} is a \GS basis $\mathscr S$ for $W(m)$. That is, we have $\mathscr S=S$ in this case.
\end{prop}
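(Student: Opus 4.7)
The plan is to invoke Theorem \ref{cor-1}: it suffices to verify that $S$ is closed under composition. Under the deg-lex order with $s_1\succ s_2\succ s_3$, the leading monomials are $\overline{p_i}=s_i^2$ for $i=1,2,3$, together with $\overline{p_4}=(s_1s_2)^{k-1}s_1$ and $\overline{p_5}=(s_2s_3)^{k-1}s_2$, where $p_4,p_5$ denote the two braid relations \eqref{eqn-ss2ss1} and \eqref{eqn-ss3ss2}.

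The first step is to enumerate the possible compositions. A key structural observation is that $\overline{p_4}$ has strictly alternating letters in $\{s_1,s_2\}$, starting and ending with $s_1$, while $\overline{p_5}$ alternates in $\{s_2,s_3\}$, starting and ending with $s_2$. This gives three simplifications: (a) no square $s_i^2$ occurs as a subword of $\overline{p_4}$ or $\overline{p_5}$, hence no inclusion composition arises; (b) among the squares, only $s_1^2$ overlaps $\overline{p_4}$ (at either $s_1$-endpoint) and only $s_2^2$ overlaps $\overline{p_5}$; (c) $\overline{p_4}$ and $\overline{p_5}$ cannot overlap each other since their endpoints are distinct letters and their alphabets share only $s_2$. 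The remaining compositions fall into three families: the trivial self-intersections $(p_i,p_i)_{s_i^3}$; the four boundary intersections $(p_1,p_4)_{s_1\overline{p_4}}$, $(p_4,p_1)_{\overline{p_4}s_1}$ and the $(p_2,p_5)$-analogues; and the self-intersections $(p_4,p_4)_w$ and $(p_5,p_5)_w$ indexed by odd overlap lengths $2j+1$ with $0\le j\le k-2$.

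For the boundary compositions, a direct computation shows that $(p_1,p_4)_w\equiv(s_1s_2)^k-(s_2s_1)^{k-1}$ modulo $p_1$; applying $p_4$ once rewrites the first term as $(s_2s_1)^{k-1}s_2^2$, and $p_2$ then collapses this to $(s_2s_1)^{k-1}$, cancelling the second term. The intersection $(p_4,p_1)_w$ reduces by the same mechanism with $p_4$ and $p_2$ applied in the other order, and the two $p_2$-$p_5$ cases are identical after the relabeling $s_1\leftrightarrow s_3$.

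The main technical step is the self-intersection family. For overlap length $2j+1$ one takes $w=(s_1s_2)^{2k-j-2}s_1$ and finds, after the two copies of $w$ coming from the leading monomials cancel, that
\[
(p_4,p_4)_w \;=\; -(s_2s_1)^{k-1}s_2(s_2s_1)^{k-j-1}\;+\;(s_1s_2)^{k-j-1}(s_2s_1)^{k-1}s_2.
\]
I expect both summands to reduce, using only $p_1$ and $p_2$ (not $p_4$ itself), to the common monomial $(s_2s_1)^js_2$: in the first summand the sandwiched $s_2$ merges with the leading $s_2$ of $(s_2s_1)^{k-j-1}$ via $s_2^2=e$, producing an $s_1$ that combines with the trailing $s_1$ of $(s_2s_1)^{k-1}$ via $s_1^2=e$, thereby shortening each outer block by one factor; iterating this peel $k-j-1$ times lands on $(s_2s_1)^js_2$. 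A parallel peel of the right summand, starting from the junction $(s_1s_2)\cdot(s_2s_1)$ in the middle, produces the same monomial. The only delicate point is keeping track of this alternating reduction across all $j$ and $k$, which is most cleanly handled by induction on $k-j-1$. The analogous check for $(p_5,p_5)_w$ follows by the symmetry $s_1\leftrightarrow s_3$, completing the verification that $S$ is closed under composition.
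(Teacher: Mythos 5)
Your proposal is correct and follows essentially the same route as the paper: both verify closure under composition (Theorem \ref{cor-1}) by enumerating the same overlaps --- the $s_i^2$-with-braid boundary overlaps and the odd-length self-overlaps of each braid relation --- and reduce each composition to zero by the same peeling computations (your parameter $j$ corresponds to the paper's $\ell=k-j-1$, and your common normal form $(s_2s_1)^js_2$ is one reduction step past the paper's pair $(s_2s_1)^{k-\ell}s_1$, $(s_2s_1)^{k-\ell-1}s_2$). The only cosmetic differences are that you justify explicitly why no other compositions (inclusions, or $\overline{p_4}$--$\overline{p_5}$ overlaps) occur, where the paper simply asserts this, and that the relabeling carrying the $p_4$-computations to the $p_5$-computations is really $s_1\mapsto s_2$, $s_2\mapsto s_3$ rather than the swap $s_1\leftrightarrow s_3$, which does not respect the chosen monomial order.
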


\begin{proof} There are no possible compositions among \eqref{eqn-ss1}, \eqref{eqn-ss2} and \eqref{eqn-ss3}. The composition of \eqref{eqn-ss1} and \eqref{eqn-ss2ss1} is 
\begin{align*}
& \eqref{eqn-ss1} \times s_2(s_1s_2)^{k-2}s_1 - s_1 \times \eqref{eqn-ss2ss1} = s_1 (s_2s_1)^{k-1}s_2 - s_2(s_1s_2)^{k-2}s_1 \\ &= (s_1s_2)^{k-1} s_1s_2 -s_2 (s_1 s_2)^{k-2}s_1= (s_2s_1)^{k-1} s_2s_2 - (s_2 s_1)^{k-1} =0, 
\end{align*}
where we use \eqref{eqn-ss2ss1} and \eqref{eqn-ss2}.
The composition of \eqref{eqn-ss2ss1} and \eqref{eqn-ss1} is
\begin{align*}
\eqref{eqn-ss2ss1} \times s_1 - (s_1s_2)^{k-1}  \times \eqref{eqn-ss1} & = - (s_2s_1)^k  + (s_1s_2)^{k-1} = - s_2(s_2s_1)^{k-1}s_2 + (s_1s_2)^{k-1} \\
 & = - (s_1s_2)^{k-1} + (s_1s_2)^{k-1}  = 0 , 
\end{align*}
where we use \eqref{eqn-ss2ss1} and \eqref{eqn-ss2}.
Similarly, the composition  of \eqref{eqn-ss2} and \eqref{eqn-ss3ss2} and that of \eqref{eqn-ss3ss2} and \eqref{eqn-ss2} are all trivial.

The compositions between \eqref{eqn-ss2ss1} and \eqref{eqn-ss2ss1} are 
\begin{align*}
\eqref{eqn-ss2ss1} \times s_2 (s_1s_2)^{\ell -1} s_1 - (s_1s_2)^{\ell}  \times \eqref{eqn-ss2ss1} &= -(s_2s_1)^{k-1} s_2 s_2 (s_1 s_2)^{\ell -1} s_1 + (s_1 s_2)^\ell (s_2 s_1)^{k-1} s_2 \\ &= - (s_2 s_1)^{k-\ell} s_1 + (s_2s_1)^{k-\ell -1} s_2 =0
\end{align*}
for $1 \le \ell \le k-1$, where we use \eqref{eqn-ss1} and \eqref{eqn-ss2}. Similarly, the compositions between \eqref{eqn-ss3ss2} and \eqref{eqn-ss3ss2} are trivial.

There is no more possible composition. Thus the set of defining relations \eqref{eqn-ss1}-\eqref{eqn-ss3ss2} is closed under composition, and it is a \GS basis by Theorem \ref{cor-1}.
\end{proof}

{(ii) Assume that $m=2k$, $k \ge 2$.}
Let $X=\{ s_1, s_2, s_3 \}$. The set $S$ of the defining relations are given by
\begin{align}
s_1^2&-e, \label{eqn-s1} \\s_2^2&-e, \label{eqn-s2} \\s_3^2&-e, \label{eqn-s3} \\
(s_1s_2)^k& -(s_2 s_1)^k,\label{eqn-s2s1}  \\
(s_2s_3)^k &-(s_3 s_2)^k. \label{eqn-s3s2} 
\end{align}

\begin{prop}\label{prop:s(m)even}
Let $m=2k$ for $k \ge 2$. A \GS basis $\mathscr S$ for $W(m)$ is given by the set consisting of defining relations \eqref{eqn-s1}-\eqref{eqn-s3s2} and one additional relation
\begin{equation} \label{eqn-s3s2s1}
(s_1 s_2)^{k-1} s_1 (s_3s_2)^k - (s_2s_1)^k s_3 (s_2s_3)^{k-1}. 
\end{equation}
\end{prop}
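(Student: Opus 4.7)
The plan is to invoke Theorem \ref{cor-1} and verify that the proposed set $\mathscr S$ is closed under composition. Let $\mathscr S_0$ denote the subset consisting of the defining relations \eqref{eqn-s1}--\eqref{eqn-s3s2}, and write $w_0 := (s_1s_2)^{k-1}s_1(s_3s_2)^k$ for the leading monomial of the extra relation \eqref{eqn-s3s2s1}. First I would handle compositions among pairs in $\mathscr S_0$. Compositions of each $s_i^2-e$ with a braid relation and the self-compositions of each braid relation all reduce to zero by computations analogous to those in the proof of Proposition \ref{prop:s(m)odd}. The genuinely new composition in the even case is the intersection of \eqref{eqn-s2s1} and \eqref{eqn-s3s2} sharing the single letter $s_2$, with
\begin{equation*}
w = (s_1s_2)^k (s_3s_2)^{k-1} s_3 = (s_1s_2)^{k-1} s_1 (s_2s_3)^k.
\end{equation*}
Using $(s_3s_2)^{k-1}s_3 = s_3(s_2s_3)^{k-1}$, a direct calculation gives
\begin{equation*}
(p,q)_w = (s_1s_2)^{k-1}s_1(s_3s_2)^k - (s_2s_1)^k s_3 (s_2s_3)^{k-1},
\end{equation*}
which is exactly \eqref{eqn-s3s2s1}. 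Since neither monomial is $\mathscr S_0$-reducible, this relation must be adjoined to get $\mathscr S$.

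Next I would verify that $\mathscr S$ admits no further compositions requiring new relations. Because $w_0$ starts with $s_1$, ends with $s_2$, has no consecutive repeated letters, and contains neither $(s_1s_2)^k$ nor $(s_2s_3)^k$ as a subword, there are no compositions of inclusion, and the only new compositions of intersection are:
\begin{itemize}
\item[(a)] $s_1^2-e$ with \eqref{eqn-s3s2s1} at the initial $s_1$ of $w_0$;
\item[(b)] \eqref{eqn-s3s2s1} with $s_2^2-e$ at the terminal $s_2$ of $w_0$;
\item[(c)] \eqref{eqn-s2s1} with \eqref{eqn-s3s2s1}, with overlaps of even length $2j$ for $1\leq j \leq k-1$;
\item[(d)] \eqref{eqn-s3s2s1} with \eqref{eqn-s3s2}, with overlaps of odd length $2j+1$ for $0\leq j\leq k-1$.
\end{itemize}
No self-composition of \eqref{eqn-s3s2s1} occurs, since the terminal letter $s_2$ of $w_0$ does not match the initial letter $s_1$. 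Cases (a) and (b) reduce immediately by applying $s_i^2=e$ followed by one further application of \eqref{eqn-s3s2s1}.

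The main obstacle is the two families (c) and (d), which together comprise $2k-1$ compositions of words of length approaching $6k$. Each such composition expands into a difference whose reduction to zero requires interleaved applications of $(s_1s_2)^k \equiv (s_2s_1)^k$, $(s_2s_3)^k \equiv (s_3s_2)^k$, and \eqref{eqn-s3s2s1} itself. These reductions all succeed because of the chain
\begin{equation*}
(s_1s_2)^{k-1} s_1 (s_2s_3)^k \equiv (s_1s_2)^k s_3 (s_2s_3)^{k-1} \equiv (s_2s_1)^k s_3 (s_2s_3)^{k-1} \pmod{\mathscr S},
\end{equation*}
which exhibits the two sides of \eqref{eqn-s3s2s1} as being connected through both braid relations and explains its compatibility with them. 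Once every composition has been driven to zero modulo the appropriate $w$, Theorem \ref{cor-1} yields that $\mathscr S$ is a \GS basis for $W(m)$.
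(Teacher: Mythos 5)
Your setup is the same as the paper's: you correctly identify the leading terms, locate the single-letter overlap of \eqref{eqn-s2s1} and \eqref{eqn-s3s2} whose composition produces \eqref{eqn-s3s2s1}, observe that neither side of the new relation is reducible by the defining relations, and correctly enumerate the compositions that the enlarged set admits (no inclusions, no self-composition of \eqref{eqn-s3s2s1}, and the four families (a)--(d), with the right ranges $1\le j\le k-1$ and $0\le j\le k-1$). Up to that point the proposal matches the paper's proof.

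However, there is a genuine gap: the entire content of the proposition is the verification that each of these $2k+1$ compositions reduces to zero modulo $(\mathscr S;w)$, and you do not perform it. For families (c) and (d) you write only that ``these reductions all succeed because of the chain'' $(s_1s_2)^{k-1}s_1(s_2s_3)^k \equiv (s_1s_2)^k s_3(s_2s_3)^{k-1} \equiv (s_2s_1)^k s_3(s_2s_3)^{k-1}$ --- but that chain is just a restatement of how \eqref{eqn-s3s2s1} arises; it does not show that, say, the composition $\eqref{eqn-s2s1}\times (s_1s_2)^{\ell-1}s_1(s_3s_2)^k - (s_1s_2)^{\ell}\times\eqref{eqn-s3s2s1}$ vanishes. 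If even one of these compositions failed to reduce to zero, one would have to adjoin further relations and the stated $\mathscr S$ would not be a \GS basis, so the assertion cannot be waved through. The explicit computations (which the paper carries out, each in three or four lines using $s_i^2=e$ and the braid relations) are the proof. Two secondary inaccuracies reinforce that the computations were not actually done: the reduction of case (a) goes through \eqref{eqn-s2s1}, \eqref{eqn-s1} and \eqref{eqn-s3s2} rather than through ``one further application of \eqref{eqn-s3s2s1}''; and the reductions in families (c) and (d) in fact use only the involutions and braid relations, not ``interleaved applications of \eqref{eqn-s3s2s1} itself.'' Finally, your argument that \eqref{eqn-s3s2s1} has no self-composition (``the terminal letter $s_2$ does not match the initial letter $s_1$'') only excludes overlaps of length one; longer overlaps must be excluded by noting that no proper suffix of $(s_1s_2)^{k-1}s_1(s_3s_2)^k$ beginning with $s_1$ agrees with a prefix of it (the positions of the first $s_3$ differ). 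The conclusion is true, but as written the proposal asserts rather than proves the closure under composition.
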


\begin{proof}
There are no non-trivial compositions among \eqref{eqn-s1}, \eqref{eqn-s2} and \eqref{eqn-s3}.
The composition of \eqref{eqn-s1} and \eqref{eqn-s2s1} is
\begin{align*}
\eqref{eqn-s1} \times s_2(s_1s_2)^{k-1} - s_1 \times \eqref{eqn-s2s1} & = s_1 (s_2s_1)^k - s_2(s_1s_2)^{k-1} = (s_1s_2)^k s_1 -s_2 (s_1 s_2)^{k-1} \\
 & = (s_2s_1)^k s_1 -s_2 (s_1 s_2)^{k-1} = 0 , 
\end{align*}
where we use \eqref{eqn-s2s1} and \eqref{eqn-s1}.
The composition of \eqref{eqn-s2s1} and \eqref{eqn-s2} is
\begin{align*}
\eqref{eqn-s2s1} \times s_2 - (s_1s_2)^{k-1} s_1 \times \eqref{eqn-s2} & = - (s_2s_1)^k s_2 + (s_1s_2)^{k-1} s_1= - s_2(s_1s_2)^k + (s_1s_2)^{k-1} s_1\\
 & = - s_2(s_2s_1)^k + (s_1s_2)^{k-1} s_1 = 0 , 
\end{align*}
where we use \eqref{eqn-s2s1} and \eqref{eqn-s2}.
Similarly, the composition  of \eqref{eqn-s2} and \eqref{eqn-s3s2} and that of \eqref{eqn-s3s2} and \eqref{eqn-s3} are all trivial.

The composition between \eqref{eqn-s2s1} and \eqref{eqn-s3s2} is
\begin{align*}
\eqref{eqn-s2s1} \times s_3 (s_2s_3)^{k-1} - (s_1s_2)^{k-1}s_1 \times \eqref{eqn-s3s2}& = (s_1 s_2)^{k-1} s_1 (s_3s_2)^k - (s_2s_1)^k s_3 (s_2s_3)^{k-1}.
\end{align*}
Thus we have obtained a new relation
\begin{equation*} 
(s_1 s_2)^{k-1} s_1 (s_3s_2)^k - (s_2s_1)^k s_3 (s_2s_3)^{k-1},
\end{equation*}
which is the relation \eqref{eqn-s3s2s1}.

The composition between \eqref{eqn-s1} and \eqref{eqn-s3s2s1} is
\begin{align*}
& \eqref{eqn-s1} \times s_2(s_1s_2)^{k-2} s_1 (s_3s_2)^k - s_1 \times \eqref{eqn-s3s2s1}  = s_1 (s_2s_1)^k s_3 (s_2s_3)^{k-1}-s_2 (s_1s_2)^{k-2}s_1(s_3s_2)^k \\
& = (s_1 s_2)^k s_1 s_3 (s_2s_3)^{k-1} - (s_2s_1)^{k-1} (s_3s_2)^k = (s_2s_1)^k s_1 s_3 (s_2s_3)^{k-1} - (s_2s_1)^{k-1} (s_3s_2)^k \\ &= (s_2s_1)^{k-1} (s_2 s_3)^{k}  - (s_2s_1)^{k-1} (s_3s_2)^k =0,
\end{align*}
where we use \eqref{eqn-s2s1}, \eqref{eqn-s1} and \eqref{eqn-s3s2}.
The composition between \eqref{eqn-s3s2s1} and \eqref{eqn-s2} is
\begin{align*}
& \eqref{eqn-s3s2s1} \times s_2 - (s_1 s_2)^{k-1} s_1 (s_3s_2)^{k-1} s_3 \times \eqref{eqn-s2} \\ =& - (s_2 s_1)^k s_3 (s_2s_3)^{k-1} s_2 + (s_1 s_2)^{k-1} s_1 (s_3s_2)^{k-1} s_3 \\  =& - s_2 (s_1 s_2)^{k-1} s_1 (s_3s_2)^k + (s_1 s_2)^{k-1} s_1 (s_3s_2)^{k-1} s_3 \\ =& - s_2 (s_2 s_1)^k s_3 (s_2 s_3)^{k-1} + (s_1 s_2)^{k-1} s_1 (s_3s_2)^{k-1} s_3 \\  =& - (s_1 s_2)^{k-1} s_1 s_3 (s_2 s_3)^{k-1} + (s_1 s_2)^{k-1} s_1 (s_3s_2)^{k-1} s_3 =0, 
\end{align*}
where we use \eqref{eqn-s3s2s1} and \eqref{eqn-s2}.

The compositions between \eqref{eqn-s2s1} and \eqref{eqn-s3s2s1} are, for $1 \le \ell \le k-1$, 
\begin{align*}
& \eqref{eqn-s2s1} \times (s_1s_2)^{\ell -1} s_1 (s_3 s_2)^k - (s_1s_2)^\ell \times \eqref{eqn-s3s2s1} \\ =& - (s_2s_1)^k (s_1 s_2)^{\ell -1} s_1 (s_3 s_2)^k + (s_1 s_2)^\ell (s_2 s_1)^k s_3 (s_2 s_3)^{k-1}  \\
= &- (s_2s_1)^{k-\ell+1}  s_1 (s_3 s_2)^k + (s_2 s_1)^{k-\ell} s_3 (s_2 s_3)^{k-1} \\ = & - (s_2s_1)^{k-\ell}  (s_2 s_3)^k s_2 + (s_2 s_1)^{k-\ell}  (s_3 s_2)^{k-1} s_3 \\ = & - (s_2s_1)^{k-\ell}  (s_3 s_2)^k s_2 + (s_2 s_1)^{k-\ell}  (s_3 s_2)^{k-1} s_3 =0 ,
\end{align*}
where we use \eqref{eqn-s1}, \eqref{eqn-s2} and \eqref{eqn-s3s2}.
The compositions between \eqref{eqn-s3s2s1} and \eqref{eqn-s3s2} are, for $1 \le \ell \le k$,
\begin{align*}
& \eqref{eqn-s3s2s1} \times (s_3 s_2)^{\ell -1} s_3 - (s_1 s_2)^{k-1} s_1 s_3 (s_2 s_3)^{\ell -1} \times \eqref{eqn-s3s2} \\ =& (s_1s_2)^{k-1} s_1s_3 (s_2s_3)^{\ell -1} (s_3 s_2)^k - (s_2s_1)^k s_3 (s_2 s_3)^{k-1} (s_3s_2)^{\ell -1} s_3 \\
=& (s_1s_2)^{k-1} s_1s_3 (s_3 s_2)^{k-\ell+1} - (s_2s_1)^k s_3 (s_2 s_3)^{k-\ell}  s_3 \\
=& (s_1s_2)^{k-1} s_1 (s_2 s_3)^{k-\ell}s_2 - (s_2s_1)^k (s_3 s_2)^{k-\ell}\\
=& (s_1s_2)^{k} (s_3 s_2)^{k-\ell} - (s_2s_1)^k (s_3 s_2)^{k-\ell} =0,
\end{align*}
where we use \eqref{eqn-s2}, \eqref{eqn-s3} and \eqref{eqn-s2s1}.

The compositions between \eqref{eqn-s2s1} and \eqref{eqn-s2s1} are 
\begin{align*}
\eqref{eqn-s2s1} \times  (s_1s_2)^{\ell }  - (s_1s_2)^{\ell}  \times \eqref{eqn-s2s1} &= -(s_2s_1)^{k} (s_1 s_2)^{\ell }  + (s_1 s_2)^\ell (s_2 s_1)^{k}  \\ &= - (s_2 s_1)^{k-\ell}  + (s_2s_1)^{k-\ell }  =0
\end{align*}
for $1 \le \ell \le k-1$, where we use \eqref{eqn-s1} and \eqref{eqn-s2}. Similarly, the compositions between \eqref{eqn-s3s2} and \eqref{eqn-s3s2} are trivial.

There is no more possible composition. Thus the set consisting of relations \eqref{eqn-s1}-\eqref{eqn-s3s2s1} is closed under composition, and it is a \GS basis by Theorem \ref{cor-1}.
\end{proof}

\end{document}